\documentclass{amsart}

\usepackage{mathtools,amssymb,amsthm}
\usepackage{enumitem}

\theoremstyle{plain}
\newtheorem{theorem}{Theorem}[section]
\newtheorem{lemma}[theorem]{Lemma}
\newtheorem{proposition}[theorem]{Proposition}
\newtheorem{corollary}[theorem]{Corollary}
\newtheorem{conjecture}[theorem]{Conjecture}
\theoremstyle{remark}
\newtheorem{remark}[theorem]{Remark}

\newcommand{\CC}{\mathbb{C}}
\newcommand{\FF}{\mathbb{F}}

\newcommand{\QQ}{\mathbb{Q}}

\newcommand{\ZZ}{\mathbb{Z}}

\DeclareMathOperator{\Cl}{Cl}
\DeclareMathOperator{\Gal}{Gal}
\DeclareMathOperator{\Norm}{N}
\DeclareMathOperator{\id}{id}
\DeclareMathOperator{\lcm}{lcm}
\DeclareMathOperator{\rad}{rad}

\newcommand{\Frob}[2]{\left[\dfrac{#1}{#2}\right]}

\begin{document}

\title[Class number divisibility for pairs of real quadratic fields]{Infinite families of pairs of real quadratic fields \\
whose class numbers are divisible \\ by a given integer}
\author[Y.~Iizuka]{Yoshichika Iizuka}
\address{Department of Mathematics, Gakushuin University, Mejiro, Toshima-ku, Tokyo 171-8588, Japan}
\email{iizuka@math.gakushuin.ac.jp}
\date{}

\subjclass[2020]{Primary 11R29; Secondary 11R11}
\keywords{Real quadratic fields, ideal class groups}

\begin{abstract}
For any integers \(N \ge 2\) and \(m \ge 1\), we prove that there exist infinitely many pairs of real quadratic fields \(\QQ(\sqrt{D}), \QQ(\sqrt{D + m})\), with \(D \in \ZZ\) and \(D > 0\), such that the class numbers of both fields are divisible by \(N\). Write \(N = 2^{e}n\) with \(n\) odd. If \(n > 1\), then the class groups of both fields in each such pair contain an element of order \(n\).
\end{abstract}

\maketitle

\section{Introduction}

The divisibility of class numbers of quadratic fields is a classical topic in number theory. In particular, a basic question is whether there exist infinitely many quadratic fields whose class numbers are divisible by a given integer. It is known from Gauss's genus theory that the class number of a quadratic field is divisible by a power of \(2\) depending on the number of distinct prime factors of the discriminant of the field. For imaginary quadratic fields, Nagell proved in 1922 that, for any positive integer \(n\), there exist infinitely many imaginary quadratic fields whose class numbers are divisible by \(n\) \cite{Nagell1922}. Subsequently, Ankeny and Chowla gave another proof in 1955 \cite{AnkenyChowla1955}.

For real quadratic fields, Honda's pioneering result \cite{Honda1968} establishes the existence of infinitely many real quadratic fields whose class numbers are divisible by \(3\). Another classical result concerning the \(3\)-divisibility of class numbers is Scholz's reflection theorem, which relates the \(3\)-ranks of the class groups of real and imaginary quadratic fields \cite{Scholz1932}. According to Scholz's result, if \(d > 1\) is a square-free integer, \(r\) is the \(3\)-rank of the class group of the real quadratic field \(\QQ(\sqrt{d})\), and \(s\) is the \(3\)-rank of the class group of the corresponding imaginary quadratic field \(\QQ(\sqrt{-3d})\), then we have
\[
r \le s \le r + 1.
\]
For the definition of the \(p\)-rank of a finite abelian group for a prime \(p\), see Section~\ref{sec:notation}. Consequently, if the class number of \(\QQ(\sqrt{d})\) is a multiple of \(3\), then the class number of \(\QQ(\sqrt{-3d})\) is also a multiple of \(3\). This theorem illustrates the close relationship between the divisibility properties of the class numbers of real and imaginary quadratic fields.

In 1970, Yamamoto proved that, for any positive integer \(n\), there exist infinitely many real quadratic fields whose class numbers are multiples of \(n\) \cite{Yamamoto1970}. Yamamoto's method, which uses ideas from class field theory, relates the construction of unramified extensions to integer solutions of Diophantine equations. More precisely, two ideal classes are constructed from integer solutions of a certain Diophantine equation; in the imaginary quadratic case, both of them are shown to have order \(n\) and to be independent, whereas in the real quadratic case, the subgroup generated by them is shown to contain an ideal class of order \(n\). Furthermore, Yamamoto deduces the existence of infinitely many such quadratic fields by imposing suitable local conditions on the discriminant. The construction in the real quadratic case is more complicated than in the imaginary quadratic case because real quadratic fields have units of infinite order. Subsequently, in 1973, Weinberger proved that, for any positive integer \(n\), there exist infinitely many real quadratic fields of the form \(\QQ(\sqrt{x^{2n} + 4})\) (\(x \in \ZZ\)) whose class numbers are multiples of \(n\) \cite{Weinberger1973}.

Subsequent work turned to pairs of quadratic fields and to the problem of simultaneous divisibility of their class numbers by a specified integer. In particular, concerning the \(3\)-divisibility of class numbers, Komatsu gave an infinite family of pairs of quadratic fields of the form \(\QQ(\sqrt{D})\) and \(\QQ(\sqrt{mD})\) (\(D \in \ZZ\)) such that both class numbers are multiples of \(3\) \cite{Komatsu2002}. Komatsu later extended this result to arbitrary orders: for any natural numbers \(n, m > 1\), he constructed an infinite family of pairs of imaginary quadratic fields \(\QQ(\sqrt{D})\) and \(\QQ(\sqrt{mD})\) such that the class group of each field contains an element of order \(n\) \cite{Komatsu2017}. In addition, Iizuka, Konomi, and Nakano constructed, for any fixed \(l \in \{3, 5, 7\}\) and integers \(a, b\) (\(a \ne 0\)), an infinite family of pairs of quadratic fields of the form \(\QQ(\sqrt{D})\) and \(\QQ(\sqrt{aD + b})\) (\(D \in \QQ\)) such that both class numbers are multiples of \(l\) \cite{IizukaKonomiNakano2016, IizukaKonomiNakano2021}. Building on these works, the author focused on pairs of imaginary quadratic fields \(\QQ(\sqrt{D}), \QQ(\sqrt{D + 1})\) (\(D \in \ZZ\)) and constructed an infinite family of such pairs whose class numbers are both multiples of \(3\) \cite{Iizuka2018}. A key feature of this result is that the radicand \(D\) is taken to be an integer and that the construction gives an infinite family in which the class numbers of the imaginary quadratic fields obtained from two radicands differing by \(1\) are both multiples of \(3\). The author further proposed the following conjecture as a generalization:

\begin{conjecture}\label{conj:iizuka}
For any prime \(l\) and any positive integer \(m\),
there exist infinitely many distinct \((m + 1)\)-tuples of
real (resp.~imaginary) quadratic fields
\[
\left(\QQ(\sqrt{D}), \QQ(\sqrt{D + 1}), \dots, \QQ(\sqrt{D + m})\right),
\qquad D \in \ZZ,
\]
such that the class number of each field is divisible by \(l\).
\end{conjecture}

In a related direction, Chattopadhyay and Muthukrishnan extended the simultaneous \(3\)-divisibility problem from pairs to certain triples. More precisely, if \(k \ge 1\) is cube-free, \(k \equiv 1 \pmod{9}\), and \(\gcd(k, 7 \cdot 571) = 1\), then they constructed infinitely many triples of imaginary quadratic fields
\[
\left(\QQ(\sqrt{D}), \QQ(\sqrt{D + 1}), \QQ(\sqrt{D + k^{2}})\right),
\qquad D \in \ZZ,
\]
whose class numbers are all divisible by \(3\) \cite{ChattopadhyayMuthukrishnan2021}.

The case \(l = 2\) of Conjecture~\ref{conj:iizuka} can be proved by genus theory and the Chinese remainder theorem \cite[\S3]{Iizuka2018}.
A proof that makes explicit the distinction between the class number and the narrow class number for real quadratic fields, as well as the existence of infinitely many distinct tuples of fields, is given in Appendix~\ref{app:case-l-two}.
Moreover, when \(m = 1\), Conjecture~\ref{conj:iizuka} is settled for all primes \(l\):
in the imaginary quadratic case by the result of Xie and Chao \cite{XieChao2020} described below (together with \cite[\S3]{Iizuka2018} for \(l = 2\)),
and in the real quadratic case by the main result of the present paper (Corollary~\ref{cor:conjecture-real-pairs}).

Xie and Chao substantially extended the author's result: for any odd positive integer \(n\) and any positive integer \(m\), they constructed an infinite family of pairs of imaginary quadratic fields
\[
\left(\QQ(\sqrt{D}), \QQ(\sqrt{D + m})\right),
\qquad D \in \ZZ, \quad D + m < 0,
\]
such that the class group of each field contains an element of order \(n\) \cite{XieChao2020}. Consequently, besides the case \(n = 3\), one obtains infinitely many examples for odd primes \(n\), such as \(5\) and \(7\), in which the class numbers of both fields are divisible by \(n\). The result of Xie and Chao \cite{XieChao2020} generalizes the author's result for \(n = 3\) and \(m = 1\). In their construction, auxiliary primes are chosen by means of a local--global principle for several binomials \(x^{n} - a\).
Furthermore, by imposing congruence conditions at each of these primes on the parameter that yields integer solutions of a Diophantine equation, they arrange that the two quadratic fields have the required property simultaneously.
To prove the divisibility of the class numbers, they use Yamamoto's idea \cite{Yamamoto1970}.

The case of pairs of real quadratic fields is more difficult because, unlike imaginary quadratic fields, real quadratic fields have units of infinite order. Xie and Chao constructed, for any positive integer \(m\), an infinite family of pairs of real quadratic fields \(\QQ(\sqrt{D}), \QQ(\sqrt{D + m})\) whose class numbers are both multiples of \(3\) \cite{XieChao2022}. Their construction of the pairs follows the method developed in \cite{XieChao2020}, whereas their proof of the \(3\)-divisibility of the class numbers uses the criterion of Kishi and Miyake \cite{KishiMiyake2000}.

The purpose of the present paper is to extend the above result of Xie and Chao \cite{XieChao2022} to divisibility by an arbitrary integer. The main result is as follows:

\begin{theorem}\label{thm:main}
For any integer \(N \ge 2\) and any integer \(m \ge 1\),
there exist infinitely many distinct pairs of real quadratic fields
\[
\left(\QQ(\sqrt{D}), \QQ(\sqrt{D + m})\right),
\qquad D \in \ZZ, \quad D > 0,
\]
such that both class numbers are divisible by \(N\).
Moreover, if \(N = 2^{e}n\) with \(n\) odd and \(n > 1\), then
the class group of each field contains an element of order \(n\). In particular, if \(N\) is odd, then the class group of each field contains an element of order \(N\).
\end{theorem}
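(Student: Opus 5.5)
We split \(N = 2^{e}n\) with \(n\) odd and handle the odd part \(n\) by a Yamamoto/Weinberger-type construction and the \(2\)-part \(2^{e}\) by genus theory. Both are imposed on the same radicand \(D\) through congruence conditions made compatible by the Chinese remainder theorem. The resulting family must be infinite, and no two of its members may give the same pair of fields.

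\textbf{Step 1 (odd part).} For the odd part we seek a parametric family in which \(D\) and \(D+m\) are both of a shape that forces an ideal class of order \(n\). The model is Weinberger's fields \(\QQ(\sqrt{x^{2n}+4})\). More flexibly, we want \(D = a^{2n} - 4c\,b^{n}\)-type expressions with small "defect", i.e.\ \(D\) written as \(X^{2} - 4Y^{n}\), so that \(\frac{X+\sqrt{D}}{2}\) has norm \(Y^{n}\). The ideal \(\mathfrak a\) with \(\mathfrak a^{n} = \bigl(\frac{X+\sqrt{D}}{2}\bigr)\) then has class killed by \(n\).

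We then impose the same shape on \(D+m\) simultaneously. Following Xie and Chao, we solve \(X_{1}^{2} - 4Y_{1}^{n} + m = X_{2}^{2} - 4Y_{2}^{n}\) in a polynomial family. For instance, we fix \(Y_{1}, Y_{2}\) and choose \(X_{1}, X_{2}\) with \((X_{2}-X_{1})(X_{2}+X_{1}) = m + 4(Y_{2}^{n} - Y_{1}^{n})\). This gives a one-parameter family \(t \mapsto D(t)\) of integer radicands, with \(D(t)\) and \(D(t)+m\) positive for large \(t\).

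To show that the class of \(\mathfrak a\) has order exactly \(n\) in both fields, we must rule out, for each prime \(p \mid n\), that \(\mathfrak a^{n/p}\) is principal. Principality would give an equation
\[
\Bigl(\frac{X+\sqrt{D}}{2}\Bigr)\varepsilon^{k} = \gamma^{p}
\]
with \(\varepsilon\) the fundamental unit. Because of the unit, Yamamoto obtains only an element of order \(n\) in the subgroup generated by two classes, and we would adopt that weaker but sufficient conclusion. We would construct two classes, from \(\frac{X+\sqrt D}{2}\) and its conjugate, or from two representations. We then choose auxiliary primes \(q\) where \(x^{p}-a\) has no root, using the local--global/Chebotarev argument for binomials that Xie--Chao use. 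Congruence conditions on \(t\) modulo these \(q\) then force the relevant element not to be a \(p\)-th power locally. For \(\varepsilon^{k}\) this is checked through the parity/size argument: \(\varepsilon \) is comparable to \(\frac{X+\sqrt D}{2}\) when \(D\) is close to \(X^{2}\), which is how Yamamoto and Weinberger bound the possibilities for \(k\).

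\textbf{Step 2 (the \(2\)-part).} For the \(2\)-part we add congruence conditions on \(t\) so that \(D(t)\) and \(D(t)+m\) are each divisible by many prescribed primes \(\ell \equiv 1 \pmod 4\). Note that \(\ell \equiv 1 \pmod 4\) alone does not guarantee the \(2\)-rank in the wide sense. We also need the prime divisors to make \(-1\) a norm, or we use the appendix's argument distinguishing narrow and wide class numbers. Genus theory then gives \(2\)-rank at least \(e\) in both fields. Since the odd-order element from Step 1 and the \(2\)-part combine, \(N\) divides both class numbers.

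\textbf{Step 3 (infinitely many pairs).} The final task is to pass from infinitely many \(t\) to infinitely many distinct pairs of fields. We would add a congruence forcing a fresh prime to divide \(D(t)\) exactly once, so that the square-free kernels vary with \(t\). Compatibility of all the congruences modulo the auxiliary primes is ensured by choosing them coprime to \(2m\,n\) and by checking that each residue condition is nonempty.

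The main obstacle is Step 1, specifically excluding the unit contribution so as to get order \(n\) in both fields simultaneously. I would first try to reduce to prime powers \(p^{r} \| n\) and treat each \(p\) separately. For each \(p\) I would use the auxiliary-prime trick: choose \(q\) splitting suitably so that neither \(\frac{X+\sqrt D}{2}\) nor its product with \(\varepsilon^{k}\) is a \(p\)-th power modulo a prime above \(q\), independently of \(k\).
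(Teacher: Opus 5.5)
Your architecture matches the paper's: an ideal class from a norm equation for the odd part, genus theory for $2^{e}$, and the Chinese remainder theorem to glue them. But Step~1 has a genuine gap exactly where you flag the main obstacle: nothing in your plan controls the fundamental unit of the varying field.

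In your last paragraph you want a prime $\mathfrak{q}$ at which $\alpha\varepsilon^{k}$ is a $p$-th power non-residue for every $k$. Since $(\mathcal{O}_{F}/\mathfrak{q})^{\times}/(\mathcal{O}_{F}/\mathfrak{q})^{\times p}$ is cyclic of order $p$, this says precisely that $\alpha$ is a non-residue and $\varepsilon$ is a residue at $\mathfrak{q}$, i.e.\ (C2) and (C4) of Theorem~\ref{thm:class-group-criterion}. You can fix the residue of $\alpha$ by congruences on $t$. But $\varepsilon(t)$ is not a polynomial function of $t$; it depends on the square-free kernel of $D(t)$. So neither congruences on $t$ nor Chebotarev for fixed binomials $x^{p} - a$ can prescribe its residue.

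The size argument does not close this. Ruling out $\alpha = \pm\varepsilon^{k}\gamma^{p}$ with $p \nmid k$ by size needs a usable upper bound on $\varepsilon$, in practice an explicit unit. Your radicand $X^{2} - 4Y^{n}$ with $\gcd(X, Y) = 1$, $|Y| > 1$ is not of Richaud--Degert shape, since $4Y^{n} \nmid 4X$, so it supplies no such unit. Even an explicit Richaud--Degert unit can be $\varepsilon^{\nu}$ with $\nu > 1$ (possibly $p \mid \nu$) when $D(t)$ is not square-free, and square-freeness cannot be guaranteed for values of a polynomial of degree $2n \ge 6$. Also, $\mathfrak{a}$ and $\mathfrak{a}^{\sigma}$ have inverse classes, so Yamamoto's two-class device is not actually set up.

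The missing idea is to build an explicit unit into the family and transfer residuosity from it to $\varepsilon$. The paper's radicand has two shapes up to a nonzero square. It is $\Delta_{j}(t) = W^{2} - C_{j}$ with $C_{j} \mid W$, which gives the unit $\eta_{j}(t) = (W + \sqrt{\Delta_{j}(t)})^{2}/C_{j}$. It is also $S_{j}(t)^{2} - 4S_{j}(t)t^{2n}$, which gives the ideal class. At each auxiliary prime (all of which divide $t$), one has $\eta_{j}(t) \equiv E_{j}^{\pm 1}$ and $S_{j}(t) \equiv A_{j}$, fixed rationals independent of $t$.

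Two kinds of auxiliary primes are then fixed before $t$ (Lemma~\ref{lem:choice-of-auxiliary-primes}). The primes $q_{i, j}$ have $E_{j}$ a $p_{i}$-th power non-residue; this forces $p_{i} \nmid \nu_{j}(t)$, where $\eta_{j}(t) = \varepsilon_{j}(t)^{\nu_{j}(t)}$. The primes $l_{i, j}$ have $E_{j}$ a residue but $A_{j}$ a non-residue. They are found by Chebotarev in $K_{i}(\sqrt[p_{i}]{A_{j}}, \sqrt[p_{i}]{E_{j}})$, after showing $A_{j}, E_{j}$ are independent modulo $p_{i}$-th powers (Lemma~\ref{lem:AE-independent}). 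Lemma~\ref{lem:unit-residuosity-transfer} then makes $\varepsilon_{j}(t)$ a residue at $l_{i, j}$, which is (C4).

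Your simultaneous family is a second gap. With $Y_{1}, Y_{2}$ fixed, $(X_{2} - X_{1})(X_{2} + X_{1})$ equals a fixed integer, nonzero for general $m$, so you get finitely many $(X_{1}, X_{2})$, not a one-parameter family. The paper instead gets the shift from $c_{1}^{2} - c_{2}^{2} = 16m$, so that $\Delta_{1}(t) - \Delta_{2}(t) = 16m$ identically in $t$.

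Steps~2 and~3 are fine once such a family exists. By Lemma~\ref{lem:genus-two-part}, $e + 2$ primes dividing the radicand exactly once suffice. Neither $\ell \equiv 1 \pmod{4}$ nor $-1$ being a norm is needed, since $h^{+} \in \{h, 2h\}$. Iterating with a fresh ramified prime is a valid substitute for Siegel's theorem.
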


The stronger assertion for odd \(N\), namely that the class group of each field contains an element of order \(N\),
is a real quadratic analogue of the result of Xie and Chao for pairs of imaginary quadratic fields \cite[Theorem~1.2]{XieChao2020}.

Applying Theorem~\ref{thm:main} with \(m = 1\) and \(N = l\), where \(l\) is a prime,
we obtain the case \(m = 1\) of Conjecture~\ref{conj:iizuka} for real quadratic fields, for every prime \(l\).

\begin{corollary}\label{cor:conjecture-real-pairs}
For any prime \(l\), there exist infinitely many distinct pairs of real quadratic fields
\[
\left(\QQ(\sqrt{D}), \QQ(\sqrt{D + 1})\right),
\qquad D \in \ZZ, \quad D > 0,
\]
such that both class numbers are divisible by \(l\).
In other words, the case \(m = 1\) of Conjecture~\ref{conj:iizuka} holds for real quadratic fields for every prime \(l\).
Moreover, the class group of each field contains an element of order \(l\).
\end{corollary}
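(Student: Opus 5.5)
The corollary should follow directly from Theorem~\ref{thm:main} by specializing the parameters, with a short case split according to whether the prime \(l\) is odd or equal to \(2\).

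First, take \(m = 1\) and \(N = l\) in Theorem~\ref{thm:main}. Since \(l \ge 2\), the hypotheses hold. The theorem then gives infinitely many distinct pairs \(\left(\QQ(\sqrt{D}), \QQ(\sqrt{D+1})\right)\) with \(D \in \ZZ\), \(D > 0\), in which both class numbers are divisible by \(l\). This is exactly the case \(m = 1\) of Conjecture~\ref{conj:iizuka} for real quadratic fields.

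Next comes the assertion about an element of order \(l\). If \(l\) is odd, then \(N = l\) is odd and the last sentence of Theorem~\ref{thm:main} applies directly: the class group of each field contains an element of order \(N = l\). If \(l = 2\), the theorem's refined statement does not apply, since \(n = 1\) there. Instead we use only that \(2\) divides the class number, i.e.\ the class group has even order. By Cauchy's theorem for finite abelian groups, the class group then contains an element of order \(2\). The same argument in fact gives the odd case as well: once \(l \mid h\), Cauchy's theorem yields an element of order \(l\).

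Nothing here is a genuine obstacle. The only point needing care is \(l = 2\), where the element of order \(l\) must come from Cauchy's theorem and not from the odd-part clause of Theorem~\ref{thm:main}.
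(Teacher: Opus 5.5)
Your proposal is correct and is essentially the paper's proof: specialize Theorem~\ref{thm:main} to \(m = 1\), \(N = l\), then use Cauchy's theorem to get an element of order \(l\). The paper applies Cauchy's theorem uniformly for all primes \(l\), exactly as in your closing remark, so the separate case split for odd \(l\) is not needed.
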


\begin{proof}
If we take \(m = 1\) and \(N = l\) in Theorem~\ref{thm:main}, then both class numbers are divisible by \(l\).
In addition, Cauchy's theorem for finite groups implies that the class group of each field contains an element of order \(l\).
\end{proof}

For imaginary quadratic fields, the case \(m = 1\) is already known: Xie and Chao proved it for odd primes \(l\) \cite[Theorem~1.2]{XieChao2020},
while the case \(l = 2\) follows from \cite[\S3]{Iizuka2018}; see also Appendix~\ref{app:case-l-two}.
Combined with Corollary~\ref{cor:conjecture-real-pairs},
this shows that the case \(m = 1\) of Conjecture~\ref{conj:iizuka} is completely settled for both real and imaginary quadratic fields.
On the other hand, for \(m \ge 2\), Conjecture~\ref{conj:iizuka} requires \((m + 1)\)-tuples,
whereas Theorem~\ref{thm:main} produces only pairs whose radicands differ by \(m\); hence the conjecture remains open for general \(m\).

We prove the theorem by writing \(N = 2^{e}n\), where \(e \ge 0\) and \(n\) is odd.
When the odd part \(n\) is greater than \(1\), we control the \(p\)-th power residuosity of the fundamental unit for each prime divisor \(p\) of \(n\)
using an explicitly constructed unit and auxiliary primes,
and we show that the class group of each field contains an element of order \(n\).
In choosing the auxiliary primes, we do not rely on the general statement of the local--global principle used by Xie and Chao \cite{XieChao2020};
instead, we establish the required linear independence in Lemma~\ref{lem:AE-independent},
and then prove the existence of the required auxiliary primes directly using Chebotarev's density theorem in Lemma~\ref{lem:choice-of-auxiliary-primes}.

We treat the \(2\)-part separately, using Gauss's genus theory rather than Kummer theory.
We arrange in advance that the discriminant of each real quadratic field has at least \(e + 2\) distinct prime divisors.
By genus theory, the \(2\)-rank of the narrow class group of a quadratic field is
one less than the number of distinct prime factors of the discriminant, so that under this ramification condition the narrow class number is divisible by \(2^{e + 1}\).
Since the narrow class number of a real quadratic field is equal to either the ordinary class number or twice the ordinary class number,
it follows that \(2^{e}\) divides the ordinary class number (Lemma~\ref{lem:genus-two-part}).
This ramification condition is realized by choosing additional primes \(\xi_{j, k}\) in Lemma~\ref{lem:choice-of-H},
and it is satisfied simultaneously with the system of congruence conditions~\eqref{eq:congruence-system} on the final parameter \(t\).
Thus the residue conditions for the odd part, which come from Kummer extensions and Chebotarev's density theorem, and the genus-theoretic conditions for the \(2\)-part are prepared independently of each other, and they are finally combined into a single family by the Chinese remainder theorem.
When the odd part is \(1\), that is, when \(N\) is a power of \(2\), we treat the case directly, using only genus theory and the Chinese remainder theorem.

Furthermore, by adapting the construction in this paper to the imaginary quadratic case, we obtain another proof of the result of Xie and Chao \cite[Theorem~1.2]{XieChao2020}.
This supplementary application is presented in Section~\ref{sec:imaginary-quadratic-case}.
For an imaginary quadratic field other than the two exceptional fields \(\QQ(\sqrt{-1})\) and \(\QQ(\sqrt{-3})\), the unit group is \(\{\pm 1\}\), so that the control of the \(p\)-th power residuosity of the fundamental unit required in the real quadratic case is unnecessary.

\section{Notation}\label{sec:notation}

In this section, we collect the notation and terminology used throughout the paper.

\subsection{Numbers, rings, and fields}

For a finite set \(S\), we denote its cardinality by \(|S|\).

We denote by \(\ZZ\), \(\QQ\), and \(\CC\) the ring of rational integers, the field of rational numbers, and the field of complex numbers, respectively, and by \(\overline{\QQ}\) the algebraic closure of \(\QQ\) in \(\CC\). All number fields are regarded as subfields of \(\overline{\QQ}\). For a prime \(q\), let \(\FF_{q}\) denote the field with \(q\) elements.

For a number field \(K\), let \(\mathcal{O}_{K}\) denote the ring of integers of \(K\).

For a commutative ring \(R\), we denote by \(R^{\times}\) the unit group of \(R\), that is, the multiplicative group of all invertible elements. In particular, for a field \(K\), the set \(K^{\times} = K \setminus \{0\}\) is the multiplicative group of \(K\), and for a number field \(K\), the group \(\mathcal{O}_{K}^{\times}\) is the unit group of \(K\).

\subsection{Notation for integers}

For a nonzero integer \(a\), let \(\omega(a)\) denote the number of distinct prime factors of \(a\), and let \(\rad(a)\) denote the product of the distinct prime factors of \(a\). The greatest common divisor and the least common multiple of integers are denoted by \(\gcd\) and \(\lcm\), respectively.

A nonzero integer \(D\) can be written uniquely as \(D = du^{2}\) with a square-free integer \(d\) (which carries the sign of \(D\)) and a positive integer \(u\). We call this \(d\) the square-free part of \(D\). In particular, when \(D\) is not a square, the discriminant of the quadratic field \(\QQ(\sqrt{D})\) is \(d\) or \(4d\).

For a prime \(p\), we denote by \(v_{p}\colon \QQ^{\times} \to \ZZ\) the \(p\)-adic valuation; that is, for \(\alpha \in \QQ^{\times}\), the value \(v_{p}(\alpha)\) is the exponent of \(p\) in the prime factorization of \(\alpha\).

\subsection{Notation for groups}

For an abelian group \(G\) written multiplicatively and a positive integer \(r\), we put
\[
G^{r} = \{g^{r} : g \in G\}.
\]
When this notation is applied to multiplicative groups and unit groups, we omit the parentheses and write
\[
K^{\times r} = (K^{\times})^{r}, \qquad
\FF_{q}^{\times r} = (\FF_{q}^{\times})^{r}, \qquad
(\mathcal{O}_{K}/\mathfrak{q})^{\times r} = \bigl((\mathcal{O}_{K}/\mathfrak{q})^{\times}\bigr)^{r}.
\]
In the first expression, \(K\) is a field, and in the second expression, \(q\) is a prime.
In the last expression, \(K\) is a number field and \(\mathfrak{q}\) is a prime ideal of \(K\).

For an abelian group \(A\) written additively and a positive integer \(r\), we denote by \(A^{\oplus r}\) the direct sum of \(r\) copies of \(A\):
\[
A^{\oplus r} = \{(u_{1}, \dots, u_{r}) : u_{1}, \dots, u_{r} \in A\}.
\]
In particular, \(A^{\oplus 2} = A \oplus A\).
The direct product of groups written multiplicatively is denoted by \(\times\); to avoid confusion with the notation \(G^{r}\) above, direct sums are always written with \(\oplus\).
In this paper, we mainly use \(\FF_{p}^{\oplus r}\) for a prime \(p\), that is, the standard \(r\)-dimensional vector space over \(\FF_{p}\).

For an element \(g\) of a group \(G\), we denote by \(\langle g \rangle\) the cyclic subgroup generated by \(g\).

For a finite abelian group \(A\) written multiplicatively and a prime \(p\), the \(p\)-rank of \(A\) is defined to be \(\dim_{\FF_{p}}(A/A^{p})\).
If the \(p\)-rank of \(A\) is \(r\), then \(|A/A^{p}| = p^{r}\), and in particular \(p^{r}\) divides \(|A|\).

\subsection{Field extensions and automorphisms}

For a field \(L\), let \(\id_{L}\) denote the identity map of \(L\). For a Galois extension \(L/K\) of fields, we denote its Galois group by \(\Gal(L/K)\). The identity element of \(\Gal(L/K)\) is \(\id_{L}\).

We write the action of a field automorphism \(\sigma\) on an element \(x\) or an ideal \(\mathfrak{a}\) as \(\sigma(x)\), \(\sigma(\mathfrak{a})\), or equivalently as \(x^{\sigma}\), \(\mathfrak{a}^{\sigma}\).

For a finite extension \(K/F\) of fields, we denote by \([K:F]\) its degree and by \(\Norm_{K/F}\colon K^{\times} \to F^{\times}\) the norm map.

\subsection{Prime ideals and valuations}

Throughout the paper, a prime ideal of a number field always means a nonzero prime ideal.

For a prime ideal \(\mathfrak{p}\) of a number field \(K\), we denote by \(v_{\mathfrak{p}}\colon K^{\times} \to \ZZ\) the \(\mathfrak{p}\)-adic valuation; that is, for \(\alpha \in K^{\times}\), the value \(v_{\mathfrak{p}}(\alpha)\) is the exponent of \(\mathfrak{p}\) in the prime ideal factorization of the principal ideal \((\alpha)\).

For a prime ideal \(\mathfrak{q}\) of a number field \(K\) and \(\alpha \in \mathcal{O}_{K}\), we say that \(\mathfrak{q}\) divides \(\alpha\) and write \(\mathfrak{q} \mid \alpha\) if \(\alpha \in \mathfrak{q}\); otherwise we write \(\mathfrak{q} \nmid \alpha\). When \(\alpha \ne 0\), the condition \(\mathfrak{q} \nmid \alpha\) is equivalent to \(v_{\mathfrak{q}}(\alpha) = 0\). For a rational prime \(p\), we similarly say that \(\mathfrak{q}\) divides \(p\) if \(p \in \mathfrak{q}\); equivalently, \(\mathfrak{q}\) lies above \(p\).

For nonzero ideals \(\mathfrak{a}\), \(\mathfrak{b}\) of a number field \(K\), we say that \(\mathfrak{a}\) divides \(\mathfrak{b}\) and write \(\mathfrak{a} \mid \mathfrak{b}\) if \(\mathfrak{b} \subseteq \mathfrak{a}\); otherwise we write \(\mathfrak{a} \nmid \mathfrak{b}\). For a prime ideal \(\mathfrak{q}\) and \(\alpha \in \mathcal{O}_{K} \setminus \{0\}\), the condition \(\mathfrak{q} \mid (\alpha)\) is equivalent to the condition \(\mathfrak{q} \mid \alpha\) above.

For a prime ideal \(\mathfrak{q}\) of a number field \(K\) and \(\alpha, \beta \in \mathcal{O}_{K}\), we write \(\alpha \equiv \beta \pmod{\mathfrak{q}}\) if \(\alpha - \beta \in \mathfrak{q}\). Via the identification described below, we use the same congruence notation for \(\alpha \in K^{\times}\) with \(v_{\mathfrak{q}}(\alpha) = 0\), interpreting \(\alpha\) through its residue class. Congruences between rational integers modulo a positive integer have the usual meaning. For a rational prime \(l\), ``modulo \(l\)'' means ``modulo \((l)\)''.

For an extension \(L/K\) of number fields and a prime ideal \(\mathfrak{q}_{L}\) of \(L\) lying above a prime ideal \(\mathfrak{q}_{K}\) of \(K\), we denote by \(f(\mathfrak{q}_{L}/\mathfrak{q}_{K})\) the residue degree.

For a prime ideal \(\mathfrak{q}\) of a number field \(K\), we denote its absolute norm by \(\Norm \mathfrak{q} = |\mathcal{O}_{K}/\mathfrak{q}|\). The subscripted symbol \(\Norm_{K/F}\) is the norm map mentioned above, and the unsubscripted symbol \(\Norm\) is the absolute norm.

For a number field \(K\), let \(d_{K}\) denote its discriminant.

For a number field \(K\), let \(\Cl(K)\) denote the ordinary ideal class group and \(h(K) = |\Cl(K)|\) the class number.
We also let \(\Cl^{+}(K)\) denote the narrow class group and \(h^{+}(K) = |\Cl^{+}(K)|\) the narrow class number.
Here the narrow class group is the quotient of the group of all nonzero fractional ideals by the subgroup of all principal ideals generated by totally positive nonzero elements.
For imaginary quadratic fields, the condition of total positivity is vacuous, and \(\Cl^{+}(K) = \Cl(K)\).
For a nonzero fractional ideal \(\mathfrak{a}\) of a number field \(K\), we denote by \([\mathfrak{a}]\) its class in the class group.

\subsection{Residue fields and \(p\)-th power residues}

For a number field \(K\) and a prime ideal \(\mathfrak{q}\) of \(K\), we denote by \(\bar{\alpha}\) the residue class of \(\alpha \in \mathcal{O}_{K}\) in the residue field \(\mathcal{O}_{K}/\mathfrak{q}\). When there is no danger of confusion, we denote \(\alpha\) and its residue class \(\bar{\alpha}\) by the same symbol. More generally, when \(\alpha \in K^{\times}\) satisfies \(v_{\mathfrak{q}}(\alpha) = 0\), the residue class of \(\alpha\) at \(\mathfrak{q}\) is well-defined as an element of \((\mathcal{O}_{K}/\mathfrak{q})^{\times}\), and we also denote it by the same symbol. In particular, when \(x \in \QQ^{\times}\) and a prime \(q\) divides neither the numerator nor the denominator of \(x\), we identify \(x\) with its residue class in \(\FF_{q}^{\times}\).

For a number field \(K\), a prime ideal \(\mathfrak{q}\) of \(K\), a prime \(p\), and \(a \in \mathcal{O}_{K}\) with \(\mathfrak{q} \nmid a\), we say that \(a\) is a \(p\)-th power residue modulo \(\mathfrak{q}\) if its residue class \(\bar{a}\) belongs to
\[
(\mathcal{O}_{K}/\mathfrak{q})^{\times p},
\]
and otherwise we say that \(a\) is a \(p\)-th power non-residue modulo \(\mathfrak{q}\). By the above identification, the same definition applies to \(a \in K^{\times}\) with \(v_{\mathfrak{q}}(a) = 0\). In particular, when \(x \in \QQ^{\times}\) and a prime \(q\) divides neither the numerator nor the denominator of \(x\), we write \(x \in \FF_{q}^{\times p}\) to indicate that \(x\) is a \(p\)-th power residue modulo \(q\), and \(x \notin \FF_{q}^{\times p}\) to indicate that \(x\) is a \(p\)-th power non-residue modulo \(q\).

\subsection{Square roots, \(p\)-th roots, and roots of unity}

For a positive real number \(x\), the symbol \(\sqrt{x}\) denotes the positive square root of \(x\). For a non-square integer \(D\), the field \(\QQ(\sqrt{D})\) does not depend on which root of \(X^{2} - D\) is taken as \(\sqrt{D}\).

For a prime \(p\), we denote by \(\mu_{p}\) the group of all \(p\)-th roots of unity in \(\overline{\QQ}\), and by \(\zeta_{p}\) a primitive \(p\)-th root of unity.

For \(a \in \overline{\QQ}^{\times}\) and a prime \(p\), we fix an element \(\alpha \in \overline{\QQ}\) with \(\alpha^{p} = a\) and denote it by \(\sqrt[p]{a}\). When \(a\) is a positive real number, we choose \(\sqrt[p]{a}\) to be the positive real root. Since the set of all roots of \(X^{p} - a\) is \(\{\zeta\sqrt[p]{a} : \zeta \in \mu_{p}\}\), for a field \(K \subseteq \overline{\QQ}\) with \(\mu_{p} \subseteq K\), the field \(K(\sqrt[p]{a})\) does not depend on the choice of \(\sqrt[p]{a}\).

\subsection{Fundamental units of real quadratic fields}

For a real quadratic field \(F\), Dirichlet's unit theorem implies that there exists a unique \(\varepsilon \in \mathcal{O}_{F}^{\times}\) such that
\[
\mathcal{O}_{F}^{\times} = \{\pm 1\} \times \langle \varepsilon \rangle, \qquad \varepsilon > 1.
\]
We call this \(\varepsilon\) the fundamental unit of \(F\). Throughout the paper, fundamental units are always taken with the normalization \(\varepsilon > 1\).

\section{Preliminaries}\label{sec:preliminaries}

In this section, we collect auxiliary results that hold in a general setting and are used in the proof of Theorem~\ref{thm:main}.
We first recall standard facts from Kummer theory that are used repeatedly in this paper (Subsection~\ref{subsec:kummer-theory}),
the definition and basic properties of Frobenius automorphisms together with Chebotarev's density theorem (Theorem~\ref{thm:chebotarev}),
and a criterion for \(p\)-th power residuosity in Kummer extensions (Proposition~\ref{prop:power-residue-criterion}).
We then give a consequence of genus theory controlling the \(2\)-part of the class number (Lemma~\ref{lem:genus-two-part}),
a lemma on the residuosity of the fundamental unit of a real quadratic field (Lemma~\ref{lem:unit-residuosity-transfer}),
and a criterion for the existence of an element of order \(n\) in the class group of a real quadratic field (Theorem~\ref{thm:class-group-criterion}).
Finally, we record a lemma based on Siegel's theorem that yields infinitely many distinct quadratic fields (Lemma~\ref{lem:siegel-quadratic-fields}).

The symbols \(F\), \(K\), \(L\), \(n\), \(p\), \(p_{i}\), \(l_{i}\), and so on used in this section are defined afresh in each statement,
and they are independent of the symbols fixed from Section~\ref{sec:construction} onward.

\subsection{Kummer theory}\label{subsec:kummer-theory}

We recall standard facts about Kummer extensions obtained by adjoining \(p\)-th roots to a field containing the \(p\)-th roots of unity, restricting ourselves to the case in which \(p\) is prime.
For statements and proofs in the case of adjoining general \(n\)-th roots, see \cite[Chapter~VI, \S\S6, 8, 9]{Lang2002} and \cite[Chapter~IV, \S3]{Neukirch1999}.

Throughout this subsection, let \(p\) be a prime and \(K\) a subfield of \(\overline{\QQ}\) with \(\mu_{p} \subseteq K\).
Since \(K\) has characteristic \(0\), no additional hypothesis on the characteristic is needed below.

For a field \(K\) and a prime \(p\), the quotient \(K^{\times}/K^{\times p}\) is an abelian group in which every element satisfies \(x^{p} = 1\), and hence it is naturally an \(\FF_{p}\)-vector space.
We say that the classes of \(a_{1}, \dots, a_{r} \in K^{\times}\) are linearly independent over \(\FF_{p}\) in \(K^{\times}/K^{\times p}\) if, whenever
\[
a_{1}^{u_{1}} \cdots a_{r}^{u_{r}} \in K^{\times p}
\]
for integers \(u_{1}, \dots, u_{r}\), one has
\[
u_{1} \equiv \dots \equiv u_{r} \equiv 0 \pmod{p}.
\]
In particular, if the classes are linearly independent, then no \(a_{k}\) belongs to \(K^{\times p}\).

\begin{proposition}\label{prop:kummer-basic}
Let \(a \in K^{\times}\), and put \(\alpha = \sqrt[p]{a}\) and \(L = K(\alpha)\). Then the following hold.
\begin{enumerate}[label=\textrm{(\roman*)}]
\item The field \(L\) is the splitting field of \(X^{p} - a\) over \(K\), and \(L/K\) is a Galois extension.
Moreover, \(L\) depends only on the class \(aK^{\times p} \in K^{\times}/K^{\times p}\) of \(a\).
In particular, \(K(\sqrt[p]{ab^{p}}) = K(\sqrt[p]{a})\) for any \(b \in K^{\times}\).
\item The polynomial \(X^{p} - a\) is irreducible over \(K\) if and only if \(a \notin K^{\times p}\).
Consequently, \([L:K] \in \{1, p\}\), and \([L:K] = p\) if and only if \(a \notin K^{\times p}\).
\item If \(a \notin K^{\times p}\), then \(L/K\) is a cyclic extension of degree \(p\), and the map
\[
\Gal(L/K) \longrightarrow \mu_{p}, \qquad \sigma \longmapsto \frac{\sigma(\alpha)}{\alpha}
\]
is an isomorphism of groups. In particular, there exists a unique \(\tau \in \Gal(L/K)\) with \(\tau(\alpha) = \zeta_{p}\alpha\), and \(\Gal(L/K) = \langle \tau \rangle\).
\end{enumerate}
\end{proposition}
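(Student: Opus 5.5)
The argument uses only the fact that \(\mu_{p} \subseteq K\) has exactly \(p\) elements and is cyclic, generated by \(\zeta_{p}\). We prove (i), then (iii), and deduce (ii) from the two.

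\textbf{(i).} The roots of \(X^{p} - a\) are \(\zeta\alpha\) for \(\zeta \in \mu_{p}\). Since \(\mu_{p} \subseteq K\), all of them lie in \(K(\alpha) = L\), so \(L\) is the splitting field of \(X^{p} - a\) over \(K\). As a splitting field in characteristic \(0\), \(L/K\) is Galois, being normal and separable. The field does not depend on the choice of \(\sqrt[p]{a}\), by the remark in Section~\ref{sec:notation}. If \(a' = ab^{p}\) with \(b \in K^{\times}\), then \(\alpha b\) is a \(p\)-th root of \(a'\). Hence \(K(\sqrt[p]{a'}) = K(\alpha b) = K(\alpha)\), so \(L\) depends only on the class of \(a\) in \(K^{\times}/K^{\times p}\).

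\textbf{(iii), the homomorphism.} For \(\sigma \in \Gal(L/K)\), the element \(\sigma(\alpha)\) is again a root of \(X^{p} - a\), so \(\chi(\sigma) := \sigma(\alpha)/\alpha \in \mu_{p}\). Since \(\mu_{p} \subseteq K\) is fixed by \(\sigma\),
\[
\sigma\tau(\alpha) = \sigma(\chi(\tau)\alpha) = \chi(\tau)\chi(\sigma)\alpha,
\]
so \(\chi\) is a homomorphism. It is injective because \(\sigma\) is determined by its value on the generator \(\alpha\). Therefore \(|\Gal(L/K)| = [L:K]\) divides \(p\), which gives \([L:K] \in \{1, p\}\).

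\textbf{(ii).} This is the only step that needs a genuine argument. If \(a = b^{p}\) with \(b \in K\), then \(b\) is a root in \(K\), so \(X^{p} - a\) is reducible (recall \(p \ge 2\)). Conversely, suppose \(X^{p} - a\) is reducible, and let \(g\) be a monic factor over \(K\) of degree \(d\) with \(0 < d < p\). Then \(g\) is a product of \(d\) factors \((X - \zeta_{i}\alpha)\), so its constant term is \(\pm \zeta\alpha^{d}\) for some \(\zeta \in \mu_{p}\). Since \(\mu_{p} \subseteq K\), this gives \(\alpha^{d} \in K\). Because \(\gcd(d, p) = 1\), we can write \(1 = ud + vp\) with \(u, v \in \ZZ\), and then
\[
\alpha = (\alpha^{d})^{u} a^{v} \in K,
\]
so \(a = \alpha^{p} \in K^{\times p}\). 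Thus \(X^{p} - a\) is irreducible exactly when \(a \notin K^{\times p}\), and in that case \([L:K] = \deg(X^{p} - a) = p\). Combined with \([L:K] \in \{1, p\}\), this shows \([L:K] = p\) if and only if \(a \notin K^{\times p}\).

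\textbf{(iii), conclusion.} Assume \(a \notin K^{\times p}\). Then \([L:K] = p\), so \(\chi\) is an injective map between groups of order \(p\), hence an isomorphism. Setting \(\tau = \chi^{-1}(\zeta_{p})\) gives the unique \(\tau\) with \(\tau(\alpha) = \zeta_{p}\alpha\). It generates \(\Gal(L/K)\) because \(\zeta_{p}\) generates \(\mu_{p}\), so \(L/K\) is cyclic of degree \(p\).
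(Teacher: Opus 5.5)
Your proof is correct. For (i) and (iii) it matches the paper's argument. The genuine difference is in (ii), where the two proofs take different routes.

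\textbf{Irreducibility.} The paper does not prove that \(X^{p} - a\) is irreducible when \(a \notin K^{\times p}\); it cites Lang, Chapter~VI, Theorem~9.1. You prove it directly. A proper monic factor of degree \(d\) has constant term \(\pm\zeta\alpha^{d}\), which gives \(\alpha^{d} \in K\). Then B\'ezout with \(\gcd(d, p) = 1\) gives \(\alpha \in K\).

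\textbf{Order of the argument.} You get \([L:K] \in \{1, p\}\) from the injection \(\Gal(L/K) \hookrightarrow \mu_{p}\), independently of (ii). The paper instead deduces \([L:K] \in \{1, p\}\) from (ii), and then uses \([L:K] = p\) to upgrade that injection to a bijection.

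\textbf{What each buys.} Your version is self-contained. It uses only that \(p\) is prime and \(\mu_{p} \subseteq K\), which is exactly the paper's setting. So the external theorem, which covers general exponents \(n\) including the extra condition when \(4 \mid n\), is not needed. The paper's citation is shorter and points to the general fact.

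\textbf{One implicit step.} For the direction \([L:K] = p \Rightarrow a \notin K^{\times p}\), you should say it explicitly. If \(a = b^{p}\) with \(b \in K\), then \(\alpha = \zeta b \in K\) for some \(\zeta \in \mu_{p}\), so \([L:K] = 1\). Equivalently, when \(X^{p} - a\) is reducible, the minimal polynomial of \(\alpha\) is a proper factor, so \([L:K] < p\).
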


\begin{proof}
(i) The set of all roots of \(X^{p} - a\) is \(\{\zeta\alpha : \zeta \in \mu_{p}\}\), and since \(\mu_{p} \subseteq K\), all of these roots belong to \(L = K(\alpha)\).
Hence \(L\) is the splitting field of \(X^{p} - a\) over \(K\), and since \(K\) has characteristic \(0\), \(L/K\) is a Galois extension.
For \(b \in K^{\times}\), \((b\alpha)^{p} = ab^{p}\), so that there exists \(\zeta \in \mu_{p}\) with \(\sqrt[p]{ab^{p}} = \zeta b\alpha\); consequently \(K(\sqrt[p]{ab^{p}}) = K(b\alpha) = K(\alpha)\).

(ii) If \(a \in K^{\times p}\), then \(X^{p} - a\) has a root in \(K\), and it is reducible since \(p \ge 2\).
Conversely, if \(a \notin K^{\times p}\), then \(X^{p} - a\) is irreducible over \(K\) by \cite[Chapter~VI, Theorem~9.1]{Lang2002}.
(Since \(p\) is prime, the additional condition in the cited theorem for exponents divisible by \(4\) does not arise.)
The assertion about \([L:K]\) follows because \([L:K]\) is the degree of the minimal polynomial of \(\alpha\) over \(K\).

(iii) For \(\sigma \in \Gal(L/K)\), the element \(\sigma(\alpha)\) is a root of \(X^{p} - a\), and thus \(\sigma(\alpha)/\alpha \in \mu_{p}\).
For \(\sigma, \sigma' \in \Gal(L/K)\), write \(\sigma'(\alpha) = \zeta'\alpha\) with \(\zeta' \in \mu_{p} \subseteq K\); then
\[
\frac{\sigma\sigma'(\alpha)}{\alpha} = \frac{\sigma(\zeta'\alpha)}{\alpha} = \zeta' \cdot \frac{\sigma(\alpha)}{\alpha},
\]
so that the map is a group homomorphism. Since \(L = K(\alpha)\), the equality \(\sigma(\alpha) = \alpha\) implies \(\sigma = \id_{L}\), which shows that the map is injective.
By (ii), \(|\Gal(L/K)| = p = |\mu_{p}|\), and thus the map is bijective; in particular, \(\Gal(L/K)\) is a cyclic group of order \(p\).
For the last assertion, it suffices to take \(\tau\) to be the preimage of \(\zeta_{p}\).
\end{proof}

\begin{lemma}\label{lem:kummer-degree}
Let \(\Gamma\) be a subgroup of \(K^{\times}\) with \(K^{\times p} \subseteq \Gamma \subseteq K^{\times}\) and \((\Gamma : K^{\times p}) < \infty\), and put
\[
L = K\bigl(\sqrt[p]{\Gamma}\bigr) = K\bigl(\sqrt[p]{a} : a \in \Gamma\bigr).
\]
Then \(L/K\) is a finite abelian extension, and \([L:K] = (\Gamma : K^{\times p})\).
\end{lemma}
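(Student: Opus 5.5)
We will prove Lemma~\ref{lem:kummer-degree} by induction on the index \((\Gamma : K^{\times p})\), using Proposition~\ref{prop:kummer-basic} for each step and a perfect-pairing argument for the degree.

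\emph{Finiteness and abelianness.} Since \(\Gamma/K^{\times p}\) is a finite \(\FF_{p}\)-vector space, choose \(a_{1}, \dots, a_{r} \in \Gamma\) whose classes form a basis, so that \(p^{r} = (\Gamma : K^{\times p})\). Every \(a \in \Gamma\) can be written \(a = a_{1}^{u_{1}} \cdots a_{r}^{u_{r}} b^{p}\) with \(b \in K^{\times}\). Hence \(\sqrt[p]{a} \in K(\sqrt[p]{a_{1}}, \dots, \sqrt[p]{a_{r}})\) up to a factor in \(\mu_{p} \subseteq K\), which gives \(L = K(\sqrt[p]{a_{1}}, \dots, \sqrt[p]{a_{r}})\). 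This is a compositum of finitely many Galois extensions, each Galois by Proposition~\ref{prop:kummer-basic}(i), so \(L/K\) is finite Galois. For \(\sigma \in \Gal(L/K)\), the ratio \(\sigma(\sqrt[p]{a_{i}})/\sqrt[p]{a_{i}}\) lies in \(\mu_{p}\). Therefore \(\sigma \mapsto (\sigma(\sqrt[p]{a_{i}})/\sqrt[p]{a_{i}})_{i}\) is an injective homomorphism \(\Gal(L/K) \to \mu_{p}^{r}\); it is a homomorphism by the computation in Proposition~\ref{prop:kummer-basic}(iii). Thus the Galois group is abelian of exponent \(p\), and \([L:K] \le p^{r}\).

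\emph{Lower bound on the degree.} Define the pairing \(\Gal(L/K) \times \Gamma/K^{\times p} \to \mu_{p}\) by \((\sigma, a) \mapsto \sigma(\sqrt[p]{a})/\sqrt[p]{a}\). This is independent of the choice of root because \(\mu_{p} \subseteq K\), and independent of the representative because \(\sigma\) fixes \(b \in K\). It is bimultiplicative, and its left kernel is trivial since \(L\) is generated by the \(\sqrt[p]{a}\). The key step is that the right kernel is trivial. If \(\sigma(\sqrt[p]{a}) = \sqrt[p]{a}\) for all \(\sigma\), then \(\sqrt[p]{a} \in K\), so \(a \in K^{\times p}\). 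Therefore \(\Gamma/K^{\times p}\) injects into \(\mathrm{Hom}(\Gal(L/K), \mu_{p})\). Since \(\Gal(L/K)\) is a finite abelian group of exponent \(p\), this Hom group has the same order as \(\Gal(L/K)\). This gives \(p^{r} \le [L:K]\), and combined with the upper bound, \([L:K] = (\Gamma : K^{\times p})\).

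\emph{Main obstacle.} The only delicate point is the well-definedness and bimultiplicativity of the pairing; in particular, \(\sigma(\sqrt[p]{ab})/\sqrt[p]{ab}\) must be computed with arbitrary choices of roots. This works because \(\sqrt[p]{a}\sqrt[p]{b}\) differs from \(\sqrt[p]{ab}\) by an element of \(\mu_{p} \subseteq K\), and \(\sigma\) fixes that element.
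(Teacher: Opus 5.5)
Your proof is correct, and it is the standard Kummer-pairing argument of Lang's \emph{Algebra}, Chapter~VI, Theorem~8.1, specialized to prime exponent \(p\); the paper does not prove this lemma and only cites that theorem. Your opening sentence announces an induction on the index, but what you actually give is the direct basis-plus-pairing argument, which needs no induction.
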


\begin{proof}
See \cite[Chapter~VI, Theorem~8.1]{Lang2002}; see also \cite[Chapter~IV, \S3]{Neukirch1999}.
\end{proof}

We use the preceding lemma in the following form.

\begin{corollary}\label{cor:kummer-independent}
Suppose that the classes of \(a_{1}, \dots, a_{r} \in K^{\times}\) are linearly independent over \(\FF_{p}\) in \(K^{\times}/K^{\times p}\).
For each \(k \in \{1, \dots, r\}\), set \(\alpha_{k} = \sqrt[p]{a_{k}}\), and let \(L = K(\alpha_{1}, \dots, \alpha_{r})\). Then the following hold.
\begin{enumerate}[label=\textrm{(\roman*)}]
\item \([L:K] = p^{r}\). Moreover, for each \(u = (u_{1}, \dots, u_{r}) \in \FF_{p}^{\oplus r}\), there exists a unique \(\tau_{u} \in \Gal(L/K)\) such that
\[
\tau_{u}(\alpha_{k}) = \zeta_{p}^{u_{k}}\alpha_{k} \qquad (k = 1, \dots, r),
\]
and \(u \mapsto \tau_{u}\) gives a group isomorphism \(\FF_{p}^{\oplus r} \to \Gal(L/K)\).
\item For each \(k\), \(\tau_{u}|_{K(\alpha_{k})} = \id_{K(\alpha_{k})}\) if and only if \(u_{k} = 0\). In particular,
\[
\Gal\bigl(L/K(\alpha_{k})\bigr) = \{\tau_{u} : u \in \FF_{p}^{\oplus r},\ u_{k} = 0\}.
\]
\item Let \(F\) be a subfield of \(K\) such that \(K/F\) is a finite Galois extension and \(a_{1}, \dots, a_{r} \in F\).
Then \(L/F\) is a finite Galois extension. For \(\sigma \in \Gal(L/F)\), write \(\sigma(\zeta_{p}) = \zeta_{p}^{\chi(\sigma)}\) with \(\chi(\sigma) \in \FF_{p}^{\times}\); then, for any \(u \in \FF_{p}^{\oplus r}\),
\[
\sigma\tau_{u}\sigma^{-1} = \tau_{\chi(\sigma)u},
\]
where \(\chi(\sigma)u = (\chi(\sigma)u_{1}, \dots, \chi(\sigma)u_{r})\).
\end{enumerate}
\end{corollary}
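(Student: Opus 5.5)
For part (i), let \(\Gamma\) be the subgroup of \(K^{\times}\) generated by \(K^{\times p}\) and \(a_{1}, \dots, a_{r}\). The linear independence hypothesis says exactly that the map
\[
\FF_{p}^{\oplus r} \to \Gamma/K^{\times p}, \qquad (u_{1}, \dots, u_{r}) \mapsto a_{1}^{u_{1}} \cdots a_{r}^{u_{r}} K^{\times p},
\]
is injective. It is surjective by construction, so \((\Gamma : K^{\times p}) = p^{r}\).

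Next I check that \(L = K(\sqrt[p]{\Gamma})\). Every element of \(\Gamma\) has the form \(a_{1}^{u_{1}} \cdots a_{r}^{u_{r}} b^{p}\) with \(b \in K^{\times}\). Its \(p\)-th roots are \(\zeta\,\alpha_{1}^{u_{1}} \cdots \alpha_{r}^{u_{r}} b\) with \(\zeta \in \mu_{p} \subseteq K\), and all of these lie in \(L\). Lemma~\ref{lem:kummer-degree} then gives \([L:K] = p^{r}\) and shows that \(L/K\) is abelian.

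To define the maps \(\tau_{u}\), consider \(\phi \colon \Gal(L/K) \to \FF_{p}^{\oplus r}\) sending \(\sigma\) to the vector whose \(k\)-th entry is \(u_{k}\), where \(\sigma(\alpha_{k})/\alpha_{k} = \zeta_{p}^{u_{k}}\). This is well defined because each \(\sigma(\alpha_{k})\) is again a root of \(X^{p} - a_{k}\). It is a homomorphism by the same computation as in Proposition~\ref{prop:kummer-basic}(iii), using \(\zeta_{p} \in K\). It is injective because \(L\) is generated over \(K\) by the \(\alpha_{k}\). Both groups have order \(p^{r}\), so \(\phi\) is bijective, and \(u \mapsto \tau_{u} = \phi^{-1}(u)\) is the required isomorphism. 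Uniqueness of \(\tau_{u}\) is again injectivity.

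For (ii), \(\tau_{u}\) fixes \(K(\alpha_{k}) = K(\alpha_{k})\) pointwise if and only if \(\tau_{u}(\alpha_{k}) = \alpha_{k}\), that is, if and only if \(\zeta_{p}^{u_{k}} = 1\), that is, \(u_{k} = 0\). The description of \(\Gal(L/K(\alpha_{k}))\) follows at once.

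For (iii), \(L\) is generated over \(K\) by all roots of the polynomials \(X^{p} - a_{k}\), and these polynomials have coefficients in \(F\). So \(L\) is the compositum of the Galois closure of \(K/F\), which is \(K\) itself, with the splitting fields over \(F\) of \(\prod_{k}(X^{p} - a_{k})\). More precisely, \(L\) is the splitting field over \(F\) of \(f \cdot \prod_{k}(X^{p} - a_{k})\), where \(f\) is a polynomial whose splitting field is \(K\). Hence \(L/F\) is finite Galois.

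Now take \(\sigma \in \Gal(L/F)\). Since \(\sigma(\alpha_{k})^{p} = a_{k}\), we can write \(\sigma(\alpha_{k}) = \eta_{k}\alpha_{k}\) with \(\eta_{k} \in \mu_{p}\), and so \(\sigma^{-1}(\alpha_{k}) = \sigma^{-1}(\eta_{k})^{-1}\alpha_{k}\). Because \(\tau_{u}\) fixes \(\mu_{p} \subseteq K\), we get
\[
\sigma\tau_{u}\sigma^{-1}(\alpha_{k}) = \sigma\bigl(\sigma^{-1}(\eta_{k})^{-1}\zeta_{p}^{u_{k}}\alpha_{k}\bigr) = \eta_{k}^{-1}\zeta_{p}^{\chi(\sigma)u_{k}}\eta_{k}\alpha_{k} = \zeta_{p}^{\chi(\sigma)u_{k}}\alpha_{k}.
\]
Since \(K/F\) is normal, \(\sigma(K) = K\), so \(\sigma\tau_{u}\sigma^{-1}\) fixes \(K\). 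By the uniqueness in (i), it equals \(\tau_{\chi(\sigma)u}\).

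The main point needing care is the identification \(L = K(\sqrt[p]{\Gamma})\) together with the well-definedness of \(\chi\). Both are routine. The only real input is Lemma~\ref{lem:kummer-degree}.
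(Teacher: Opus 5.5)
Your proposal is correct and follows the paper's proof essentially step for step: the same subgroup \(\Gamma\) with Lemma~\ref{lem:kummer-degree} for the degree, the same injective homomorphism \(\Gal(L/K) \to \FF_{p}^{\oplus r}\) compared by cardinality, and the same splitting-field description of \(L/F\), followed by evaluating \(\sigma\tau_{u}\sigma^{-1}\) on each \(\alpha_{k}\) and invoking uniqueness. The only differences are cosmetic: you parametrize \(\sigma(\alpha_{k}) = \eta_{k}\alpha_{k}\) rather than \(\sigma^{-1}(\alpha_{k})\), and you spell out why \(\sigma\tau_{u}\sigma^{-1}\) fixes \(K\), which the paper leaves implicit.
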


\begin{proof}
(i) Let \(\Gamma\) be the subgroup of \(K^{\times}\) generated by \(a_{1}, \dots, a_{r}\) and \(K^{\times p}\). Then \(K^{\times p} \subseteq \Gamma \subseteq K^{\times}\), and \(L = K(\sqrt[p]{\Gamma})\).
By the linear-independence hypothesis, the classes of \(a_{1}, \dots, a_{r}\) form a basis of the \(\FF_{p}\)-vector space \(\Gamma/K^{\times p}\), and \((\Gamma : K^{\times p}) = p^{r}\).
Hence \([L:K] = p^{r}\) by Lemma~\ref{lem:kummer-degree}.
For \(\sigma \in \Gal(L/K)\), the element \(\sigma(\alpha_{k})\) is a root of \(X^{p} - a_{k}\), and therefore there exists a unique \(u_{k} \in \FF_{p}\) with \(\sigma(\alpha_{k}) = \zeta_{p}^{u_{k}}\alpha_{k}\).
As in the proof of Proposition~\ref{prop:kummer-basic}~(iii), the map
\[
\Gal(L/K) \longrightarrow \FF_{p}^{\oplus r}, \qquad
\sigma \longmapsto (u_{1}, \dots, u_{r})
\]
is a group homomorphism, and it is injective since \(L = K(\alpha_{1}, \dots, \alpha_{r})\).
Since \(|\Gal(L/K)| = p^{r} = |\FF_{p}^{\oplus r}|\), this map is an isomorphism. Writing its inverse as \(u \mapsto \tau_{u}\) gives (i).

(ii) Since \(K(\alpha_{k})\) is generated over \(K\) by \(\alpha_{k}\), the condition \(\tau_{u}|_{K(\alpha_{k})} = \id_{K(\alpha_{k})}\) is equivalent to \(\tau_{u}(\alpha_{k}) = \alpha_{k}\), which is equivalent to \(\zeta_{p}^{u_{k}} = 1\), hence to \(u_{k} = 0\).

(iii) Since \(K/F\) is a finite Galois extension, \(K\) is the splitting field over \(F\) of some \(g(X) \in F[X]\).
Since each \(a_{k}\) lies in \(F\) and all the roots of \(X^{p} - a_{k}\) belong to \(\mu_{p}\alpha_{k} \subseteq L\), the field \(L\) is the splitting field over \(F\) of \(g(X)\prod_{k=1}^{r}(X^{p} - a_{k})\), and \(L/F\) is a finite Galois extension.
Fix \(\sigma \in \Gal(L/F)\) and \(k\). Since \(\sigma^{-1}(\alpha_{k})\) is a root of \(X^{p} - a_{k}\), we can write \(\sigma^{-1}(\alpha_{k}) = \zeta\alpha_{k}\) for some \(\zeta \in \mu_{p}\). Hence
\[
\sigma\tau_{u}\sigma^{-1}(\alpha_{k})
= \sigma\bigl(\tau_{u}(\zeta\alpha_{k})\bigr)
= \sigma\bigl(\zeta\zeta_{p}^{u_{k}}\alpha_{k}\bigr)
= \zeta_{p}^{\chi(\sigma)u_{k}} \cdot \sigma(\zeta\alpha_{k})
= \zeta_{p}^{\chi(\sigma)u_{k}}\alpha_{k}.
\]
Here we used the fact that \(\tau_{u}\) fixes \(\zeta \in \mu_{p} \subseteq K\).
Since \(\sigma\tau_{u}\sigma^{-1} \in \Gal(L/K)\), the uniqueness in (i) gives \(\sigma\tau_{u}\sigma^{-1} = \tau_{\chi(\sigma)u}\).
\end{proof}

Finally, we record a basic fact on the non-ramification of prime ideals in Kummer extensions.

\begin{lemma}\label{lem:kummer-unramified}
Let \(K\) be a number field with \(\mu_{p} \subseteq K\), let \(a \in \mathcal{O}_{K} \setminus \{0\}\), let \(L = K(\sqrt[p]{a})\), and let \(\mathfrak{q}\) be a prime ideal of \(K\) with \(\mathfrak{q} \nmid pa\).
Then \(\mathfrak{q}\) is unramified in \(L/K\).
\end{lemma}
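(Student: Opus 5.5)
The plan is to show that \(\mathfrak{q}\) does not divide the relative discriminant of \(L/K\). To do this I will exhibit an element of \(\mathcal{O}_{L}\) that generates \(L\) over \(K\) and whose minimal polynomial has a discriminant prime to \(\mathfrak{q}\). Put \(\alpha = \sqrt[p]{a}\).

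First dispose of the degenerate case. If \(a \in K^{\times p}\), then \(L = K\) by Proposition~\ref{prop:kummer-basic}~(ii), and there is nothing to prove. So assume \(a \notin K^{\times p}\). Then \(X^{p} - a\) is irreducible over \(K\), \([L:K] = p\), and \(f(X) = X^{p} - a\) is the minimal polynomial of \(\alpha\). Since \(a \in \mathcal{O}_{K}\), the element \(\alpha\) is integral, so \(\alpha \in \mathcal{O}_{L}\) and \(\mathcal{O}_{K}[\alpha] \subseteq \mathcal{O}_{L}\).

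Next compute the discriminant of \(f\). We have \(f'(X) = pX^{p-1}\), so
\[
\operatorname{disc}(f) = \pm \Norm_{L/K}\bigl(f'(\alpha)\bigr) = \pm p^{p}\,\Norm_{L/K}(\alpha)^{p-1} = \pm p^{p} a^{p-1},
\]
using \(\Norm_{L/K}(\alpha) = \pm a\). The standard relation
\[
\operatorname{disc}(f)\,\mathcal{O}_{K} = [\mathcal{O}_{L} : \mathcal{O}_{K}[\alpha]]^{2}\,\mathfrak{d}_{L/K},
\]
where the index is understood as an \(\mathcal{O}_{K}\)-ideal, shows that the relative discriminant \(\mathfrak{d}_{L/K}\) divides \(p^{p}a^{p-1}\mathcal{O}_{K}\). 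Because \(\mathfrak{q} \nmid pa\), we get \(\mathfrak{q} \nmid \mathfrak{d}_{L/K}\). By Dedekind's discriminant theorem, \(\mathfrak{q}\) is then unramified in \(L/K\).

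An alternative that avoids the discriminant theorem is a local Dedekind–Kummer argument. Since \(\mathfrak{q} \nmid p\), the reduction \(\bar f = X^{p} - \bar a\) over \(\mathcal{O}_{K}/\mathfrak{q}\) is separable: its derivative \(p X^{p-1}\) has only the root \(0\), and \(0\) is not a root of \(\bar f\) because \(\bar a \ne 0\). It follows that \(\mathfrak{q}\) does not divide the conductor of \(\mathcal{O}_{K}[\alpha]\). Hence the factorization of \(\mathfrak{q}\mathcal{O}_{L}\) mirrors the factorization of \(\bar f\) into distinct irreducibles, so every ramification index equals \(1\).

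The main point needing care is the passage from "\(\mathfrak{q}\) is prime to \(\operatorname{disc}(f)\)" to "\(\mathfrak{q}\) is unramified." This is exactly where one uses that \(\mathcal{O}_{K}[\alpha]\) may differ from \(\mathcal{O}_{L}\) only at primes dividing \(\operatorname{disc}(f)\). I would cite this standard fact, for example from Neukirch, Chapter~I and Chapter~III, rather than reprove it.
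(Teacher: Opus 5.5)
Your proof is correct and takes the same route as the paper: the power basis $1, \alpha, \dots, \alpha^{p-1}$ has discriminant $\pm p^{p}a^{p-1}$, so $\mathfrak{d}_{L/K}$ divides $(p^{p}a^{p-1})$, and Dedekind's discriminant theorem finishes. The paper gets that divisibility directly from the definition of $\mathfrak{d}_{L/K}$ as the ideal generated by discriminants of bases contained in $\mathcal{O}_{L}$, so you do not need the index formula; likewise, your closing concern about $\mathcal{O}_{K}[\alpha]$ versus $\mathcal{O}_{L}$ matters only for your Dedekind--Kummer alternative.
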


\begin{proof}
If \([L:K] = 1\), there is nothing to prove. Suppose that \([L:K] = p\). Then the minimal polynomial of \(\alpha = \sqrt[p]{a} \in \mathcal{O}_{L}\) over \(K\) is \(X^{p} - a\) (Proposition~\ref{prop:kummer-basic}~(ii)),
and \(1, \alpha, \dots, \alpha^{p - 1}\) form a \(K\)-basis of \(L\) contained in \(\mathcal{O}_{L}\).
The discriminant of this basis equals the discriminant of \(X^{p} - a\), which is \(p^{p}a^{p - 1}\) up to sign.
Let \(\mathfrak{d}_{L/K}\) be the relative discriminant ideal of \(L/K\), that is, the ideal of \(\mathcal{O}_{K}\) generated by the discriminants of all \(K\)-bases of \(L\) contained in \(\mathcal{O}_{L}\) (see \cite[Chapter~III, \S2]{Neukirch1999}).
By definition, the discriminant of the above basis belongs to \(\mathfrak{d}_{L/K}\), so that \(\mathfrak{q} \nmid pa\) implies \(\mathfrak{q} \nmid \mathfrak{d}_{L/K}\).
By Dedekind's discriminant theorem \cite[Chapter~III, \S2]{Neukirch1999}, only the prime ideals dividing \(\mathfrak{d}_{L/K}\) can ramify in \(L/K\), and hence \(\mathfrak{q}\) is unramified in \(L/K\).
\end{proof}

\subsection{Criterion for \(p\)-th power residuosity}\label{subsec:power-residue-criterion}

We begin with a basic lemma on \(n\)-th powers when the extension degree is coprime to \(n\).

\begin{lemma}\label{lem:coprime-degree-nth-powers}
Let \(K/F\) be a finite extension of fields and let \(n\) be a positive integer. Set \(f = [K:F]\) and assume that \(\gcd(f, n) = 1\).
Then we have
\[
K^{\times n} \cap F^{\times} = F^{\times n}.
\]
\end{lemma}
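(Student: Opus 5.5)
The plan is to prove the two inclusions separately. The inclusion \(F^{\times n} \subseteq K^{\times n} \cap F^{\times}\) is immediate: if \(a = b^{n}\) with \(b \in F^{\times}\), then \(a\) lies in \(F^{\times}\), and it lies in \(K^{\times n}\) because \(F^{\times} \subseteq K^{\times}\).

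For the reverse inclusion, the tool will be the norm map \(\Norm_{K/F}\colon K^{\times} \to F^{\times}\). It is multiplicative, and for \(a \in F^{\times}\) it satisfies \(\Norm_{K/F}(a) = a^{f}\); the latter holds because the determinant of multiplication by \(a\) on the \(f\)-dimensional \(F\)-vector space \(K\) is \(a^{f}\). Both facts hold for an arbitrary finite extension, so no separability assumption is needed, although in this paper all fields have characteristic \(0\) anyway.

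Now let \(a \in K^{\times n} \cap F^{\times}\), and write \(a = b^{n}\) with \(b \in K^{\times}\). Taking norms gives
\[
a^{f} = \Norm_{K/F}(a) = \Norm_{K/F}(b)^{n} \in F^{\times n}.
\]
Trivially \(a^{n} \in F^{\times n}\) as well. Since \(\gcd(f, n) = 1\), choose integers \(x, y\) with \(xf + yn = 1\). Then
\[
a = (a^{f})^{x}(a^{n})^{y} = \bigl(\Norm_{K/F}(b)^{x}a^{y}\bigr)^{n} \in F^{\times n}.
\]
This gives \(K^{\times n} \cap F^{\times} \subseteq F^{\times n}\) and completes the proof.

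There is no real obstacle here. The only point that needs care is the identity \(\Norm_{K/F}(a) = a^{f}\) for \(a \in F\), which is standard.
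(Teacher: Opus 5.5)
Your proposal is correct and follows the paper's proof essentially verbatim: take norms to get \(\Norm_{K/F}(b)^{n} = a^{f}\), then use a B\'ezout relation \(xf + yn = 1\) to write \(a = \bigl(\Norm_{K/F}(b)^{x}a^{y}\bigr)^{n}\) with the base in \(F^{\times}\). Your added justification of \(\Norm_{K/F}(a) = a^{f}\) via the determinant, and your remark that no separability is needed, are correct.
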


\begin{proof}
The inclusion \(F^{\times n} \subseteq K^{\times n} \cap F^{\times}\) is clear.
To show the reverse inclusion, let \(a \in K^{\times n} \cap F^{\times}\).
There exists \(\alpha \in K^{\times}\) with \(\alpha^{n} = a\).
Taking norms from \(K\) to \(F\), we obtain
\[
\Norm_{K/F}(\alpha)^{n} = a^{f}.
\]
Since \(\gcd(f, n) = 1\), there exist integers \(x, y\) with \(fx + ny = 1\). Therefore,
\[
\begin{aligned}
a
&= a^{fx + ny}
= (a^{f})^{x}(a^{y})^{n} \\
&= \bigl(\Norm_{K/F}(\alpha)^{x} a^{y}\bigr)^{n}.
\end{aligned}
\]
Since \(\Norm_{K/F}(\alpha)^{x} a^{y} \in F^{\times}\), it follows that \(a \in F^{\times n}\).
\end{proof}

Let \(L/K\) be a finite Galois extension of number fields, and let \(\Gal(L/K)\) be its Galois group.
Let \(\mathfrak{q}_{K}\) be a prime ideal of \(K\) unramified in \(L\),
and let \(\mathfrak{q}_{L}\) be a prime ideal of \(L\) lying above \(\mathfrak{q}_{K}\).
Then there exists a unique element \(\Frob{L/K}{\mathfrak{q}_{L}}\) of \(\Gal(L/K)\) such that
\[
\Frob{L/K}{\mathfrak{q}_{L}}(x) \equiv x^{\Norm \mathfrak{q}_{K}} \pmod{\mathfrak{q}_{L}}\quad\text{for all \(x \in \mathcal{O}_{L}\)}.
\]
This element \(\Frob{L/K}{\mathfrak{q}_{L}}\) is called the Frobenius automorphism of \(\mathfrak{q}_{L}\).

For \(\sigma \in \Gal(L/K)\), we have
\[
\Frob{L/K}{\mathfrak{q}_{L}^{\sigma}} = \sigma \cdot \Frob{L/K}{\mathfrak{q}_{L}} \cdot \sigma^{-1},
\]
and thus the conjugacy class
\[
\left\{\sigma \cdot \Frob{L/K}{\mathfrak{q}_{L}} \cdot \sigma^{-1} : \sigma \in \Gal(L/K)\right\}
\]
of \(\Frob{L/K}{\mathfrak{q}_{L}}\) in \(\Gal(L/K)\) is determined by \(\mathfrak{q}_{K}\) alone, independently of the choice of the prime ideal of \(L\) lying above \(\mathfrak{q}_{K}\).
This conjugacy class is called the Frobenius class of \(\mathfrak{q}_{K}\).

When \(L/K\) is an abelian extension, that is, when \(\Gal(L/K)\) is an abelian group,
the Frobenius automorphisms of all the prime ideals of \(L\) lying above \(\mathfrak{q}_{K}\) coincide, and the Frobenius class is a singleton.
In this case, we write \(\Frob{L/K}{\mathfrak{q}_{K}}\) for \(\Frob{L/K}{\mathfrak{q}_{L}}\).

The distribution of Frobenius classes is governed by Chebotarev's density theorem.
A set \(\mathcal{P}\) of prime ideals of \(K\) is said to have Dirichlet density \(\delta\) if
\[
\lim_{s \to 1^{+}} \frac{\sum_{\mathfrak{q} \in \mathcal{P}} (\Norm\mathfrak{q})^{-s}}{\log\dfrac{1}{s - 1}} = \delta
\]
(see \cite[Chapter~VII, \S13]{Neukirch1999}).
A set with positive Dirichlet density is infinite.

\begin{theorem}[Chebotarev's density theorem]\label{thm:chebotarev}
Let \(L/K\) be a finite Galois extension of number fields, let \(G = \Gal(L/K)\), and let \(\mathcal{C} \subseteq G\) be a conjugacy class of \(G\).
Let \(\mathcal{P}_{L/K}(\mathcal{C})\) be the set of all prime ideals \(\mathfrak{q}_{K}\) of \(K\) unramified in \(L/K\) whose Frobenius class is \(\mathcal{C}\).
Then \(\mathcal{P}_{L/K}(\mathcal{C})\) has Dirichlet density
\[
\frac{|\mathcal{C}|}{|G|}.
\]
In particular, \(\mathcal{P}_{L/K}(\mathcal{C})\) is infinite.
\end{theorem}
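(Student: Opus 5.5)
Since Theorem~\ref{thm:chebotarev} is a classical result, in the paper I would simply cite \cite[Chapter~VII, \S13]{Neukirch1999}. If a self-contained argument were wanted, I would follow the standard route: first treat the cyclic case by class field theory and \(L\)-functions, then reduce the general case to it by Deuring's trick. Throughout, prime ideals of residue degree \(>1\) over \(\QQ\) and the finitely many ramified primes can be discarded. The former contribute a sum \(\sum (\Norm\mathfrak{q})^{-s}\) that stays bounded as \(s \to 1^{+}\), so they do not affect Dirichlet densities.

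\emph{Abelian case.} Let \(L/K\) be abelian with group \(G\) and let \(\sigma \in G\). By Artin reciprocity, the map \(\mathfrak{q} \mapsto \Frob{L/K}{\mathfrak{q}}\) factors through a ray class group \(\Cl_{\mathfrak{m}}(K)\), modulo a subgroup \(H\) with \(\Cl_{\mathfrak{m}}(K)/H \cong G\). Hence each character \(\chi\) of \(G\) becomes a Hecke character. Orthogonality of characters gives
\[
\sum_{\mathfrak{q} : \Frob{L/K}{\mathfrak{q}} = \sigma} (\Norm\mathfrak{q})^{-s}
= \frac{1}{|G|}\sum_{\chi} \overline{\chi(\sigma)} \sum_{\mathfrak{q}} \chi(\mathfrak{q})(\Norm\mathfrak{q})^{-s}.
\]
Each inner sum equals \(\log L(s,\chi) + O(1)\) as \(s \to 1^{+}\). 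For the trivial character, \(\log \zeta_{K}(s) \sim \log\frac{1}{s-1}\), because \(\zeta_K\) has a simple pole at \(s=1\). For nontrivial \(\chi\), \(L(s,\chi)\) is holomorphic and nonzero at \(s = 1\), so \(\log L(s,\chi)\) stays bounded. This yields density \(1/|G|\).

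The main obstacle is this non-vanishing \(L(1,\chi) \ne 0\). I would deduce it from the factorization \(\zeta_{L}(s) = \prod_{\chi} L(s,\chi)\), valid up to finitely many Euler factors. Here \(\zeta_{L}\) has only a simple pole at \(s=1\), and each \(L(s,\chi)\) with \(\chi\) nontrivial is holomorphic at \(s=1\). A zero of any such factor would therefore cancel the pole, which is impossible. This step relies on the analytic continuation of Hecke \(L\)-functions and on the full strength of Artin reciprocity, both of which I would quote rather than reprove.

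\emph{Reduction to the cyclic case.} Let \(L/K\) be Galois with group \(G\), let \(\sigma \in \mathcal{C}\), and set \(Z = \langle\sigma\rangle\) and \(E = L^{Z}\), so that \(L/E\) is cyclic. I would count degree-one primes \(\mathfrak{Q}\) of \(E\) with \(\Frob{L/E}{\mathfrak{Q}} = \sigma\); the abelian case gives them density \(1/|Z|\). Each prime \(\mathfrak{q}_{K}\) with Frobenius class \(\mathcal{C}\) lies below exactly \(|C_{G}(\sigma)|/|Z|\) such primes \(\mathfrak{Q}\), and these all have the same norm as \(\mathfrak{q}_{K}\). Comparing the two Dirichlet series then gives density
\[
\frac{1}{|Z|}\cdot\frac{|Z|}{|C_{G}(\sigma)|} = \frac{|\mathcal{C}|}{|G|}.
\]
The counting in this step is a routine double-coset computation. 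Finally, positive density implies that \(\mathcal{P}_{L/K}(\mathcal{C})\) is infinite, since a finite set contributes a bounded sum while \(\log\frac{1}{s-1} \to \infty\).
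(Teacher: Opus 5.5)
Your proposal is correct and matches the paper, which proves this theorem only by citing \cite[Chapter~VII, Theorem~(13.4)]{Neukirch1999}. Your added sketch is the standard proof in that reference: the abelian case via Artin reciprocity, Hecke \(L\)-functions and the factorization of \(\zeta_{L}\), then reduction to the cyclic extension \(L/L^{\langle\sigma\rangle}\) by counting degree-one primes, which yields \(1/|C_{G}(\sigma)| = |\mathcal{C}|/|G|\); its counting is sound.
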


\begin{proof}
See \cite[Chapter~VII, Theorem~(13.4)]{Neukirch1999}.
\end{proof}

We use the following specialization to \(K = \QQ\).

\begin{corollary}\label{cor:chebotarev-rational}
Let \(L/\QQ\) be a finite Galois extension, let \(\mathcal{C}\) be a conjugacy class of \(\Gal(L/\QQ)\), and let \(S\) be a finite set of rational primes.
Then there exist infinitely many rational primes \(l \notin S\) that are unramified in \(L/\QQ\) and whose Frobenius class is \(\mathcal{C}\).
Moreover, if \(K\) is an intermediate field of \(L/\QQ\) such that \(K/\QQ\) is a Galois extension and every element of \(\mathcal{C}\) acts trivially on \(K\), then every such \(l\) splits completely in \(K/\QQ\).
\end{corollary}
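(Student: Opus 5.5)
The plan is to apply Theorem~\ref{thm:chebotarev} with \(K = \QQ\), and then use the standard Frobenius properties to get both the infinitude and the splitting statement.

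\emph{Infinitude.} By Theorem~\ref{thm:chebotarev}, the set \(\mathcal{P}_{L/\QQ}(\mathcal{C})\) of rational primes that are unramified in \(L\) and have Frobenius class \(\mathcal{C}\) has Dirichlet density \(|\mathcal{C}|/|\Gal(L/\QQ)| > 0\). Removing the finite set \(S\) does not change the density, since a finite set contributes a bounded amount to the numerator while \(\log\frac{1}{s-1} \to \infty\). So infinitely many such primes \(l \notin S\) remain.

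\emph{Splitting in \(K\).} Fix such a prime \(l\) and a prime \(\mathfrak{l}_{L}\) of \(L\) above it, and put \(\phi = \Frob{L/\QQ}{\mathfrak{l}_{L}} \in \mathcal{C}\). Let \(\mathfrak{l}_{K} = \mathfrak{l}_{L} \cap \mathcal{O}_{K}\).
\begin{enumerate}[label=\textrm{(\arabic*)}]
\item Since \(l\) is unramified in \(L\), it is unramified in \(K\), because ramification indices are multiplicative in towers.
\item Restriction to \(K\) carries Frobenius to Frobenius: for \(x \in \mathcal{O}_{K} \subseteq \mathcal{O}_{L}\) we have \(\phi(x) \equiv x^{l} \pmod{\mathfrak{l}_{L}}\), and both sides lie in \(\mathcal{O}_{K}\), so the congruence holds modulo \(\mathfrak{l}_{K}\). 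Hence \(\phi|_{K} = \Frob{K/\QQ}{\mathfrak{l}_{K}}\).
\item By hypothesis \(\phi\) acts trivially on \(K\), so \(\Frob{K/\QQ}{\mathfrak{l}_{K}} = \id_{K}\).
\item The Frobenius generates the decomposition group of \(\mathfrak{l}_{K}\), which is cyclic of order \(f(\mathfrak{l}_{K}/l)\). Alternatively, \(x^{l} \equiv x\) for all \(x \in \mathcal{O}_{K}\) forces the residue field \(\mathcal{O}_{K}/\mathfrak{l}_{K}\) to be \(\FF_{l}\). Either way, \(f(\mathfrak{l}_{K}/l) = 1\).
\item Since \(K/\QQ\) is Galois, all primes above \(l\) are conjugate and share the same \(e\) and \(f\). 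The identity \(efg = [K:\QQ]\) then gives \(g = [K:\QQ]\), so \(l\) splits completely in \(K\).
\end{enumerate}

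The only mildly delicate point is the compatibility of Frobenius with restriction in step (2), which is the one-line congruence argument above. The rest is routine.
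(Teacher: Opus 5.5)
Your proposal is correct and follows essentially the same route as the paper. Infinitude comes from Theorem~\ref{thm:chebotarev} after discarding the finite set \(S\). The restriction of \(\Frob{L/\QQ}{\mathfrak{l}_{L}}\) to \(K\) is \(\Frob{K/\QQ}{\mathfrak{l}_{K}}\), which is trivial by hypothesis, so the decomposition group is trivial and \(l\) splits completely. One point is worth making explicit: in your step~(2), the claim \(\phi(x) \in \mathcal{O}_{K}\) relies on \(K/\QQ\) being Galois, which is exactly where the paper uses that hypothesis too.
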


\begin{proof}
The first assertion follows from Theorem~\ref{thm:chebotarev} and the finiteness of \(S\).
We prove the second assertion. Let \(\mathfrak{L}\) be a prime ideal of \(L\) lying above \(l\), and put \(\mathfrak{l} = \mathfrak{L} \cap \mathcal{O}_{K}\).
Since \(K/\QQ\) is a Galois extension, the restriction of \(\Frob{L/\QQ}{\mathfrak{L}}\) to \(K\) equals \(\Frob{K/\QQ}{\mathfrak{l}}\).
By assumption, \(\Frob{L/\QQ}{\mathfrak{L}} \in \mathcal{C}\) restricts to the identity on \(K\), so that \(\Frob{K/\QQ}{\mathfrak{l}} = \id_{K}\).
Since \(l\) is unramified in \(K/\QQ\) and the decomposition group of \(\mathfrak{l}\) is generated by \(\Frob{K/\QQ}{\mathfrak{l}}\), this decomposition group is trivial, and \(l\) splits completely in \(K/\QQ\).
\end{proof}

\begin{proposition}\label{prop:power-residue-criterion}
Let \(F\) be a number field, \(\gamma \in \mathcal{O}_{F} \setminus \{0\}\), \(p\) a prime, and \(\zeta_{p}\) a primitive \(p\)-th root of unity.
Put \(K = F(\zeta_{p})\) and \(L = K(\sqrt[p]{\gamma})\), and assume that \(\gamma \notin K^{\times p}\).

Let \(\mathfrak{q}\) be a prime ideal of \(F\) dividing neither \(p\) nor \(\gamma\).
Let \(\mathfrak{q}_{K}\) be a prime ideal of \(K\) lying above \(\mathfrak{q}\).
Then \(\mathfrak{q}_{K}\) is unramified in \(L/K\). Furthermore, the following are equivalent.
\begin{enumerate}[label=\textrm{(\roman*)}]
\item The element \(\gamma\) is a \(p\)-th power residue modulo \(\mathfrak{q}\).
\item \(\Frob{L/K}{\mathfrak{q}_{K}} = \id_{L}\).
\item The prime ideal \(\mathfrak{q}_{K}\) splits completely in \(L/K\).
\end{enumerate}
\end{proposition}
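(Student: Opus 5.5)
Unramifiedness is immediate from Lemma~\ref{lem:kummer-unramified}: since \(\mathfrak{q}_{K}\) lies over \(\mathfrak{q}\) and \(\mathfrak{q} \nmid p\gamma\), we get \(\mathfrak{q}_{K} \nmid p\gamma\) in \(\mathcal{O}_{K}\), and \(\mu_{p} \subseteq K\). Because \(\gamma \notin K^{\times p}\), Proposition~\ref{prop:kummer-basic} makes \(L/K\) cyclic of degree \(p\), hence abelian, so \(\Frob{L/K}{\mathfrak{q}_{K}}\) is well defined. The equivalence (ii)\(\Leftrightarrow\)(iii) is standard. For an unramified prime of a Galois extension, the decomposition group is cyclic and generated by the Frobenius. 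The prime therefore splits completely exactly when this group is trivial, that is, when the Frobenius is the identity.

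The substance is (i)\(\Leftrightarrow\)(ii), which I would pass through the intermediate condition (i') that \(\gamma\) is a \(p\)-th power residue modulo \(\mathfrak{q}_{K}\).

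\emph{Step (ii)\(\Leftrightarrow\)(i').} Put \(\alpha = \sqrt[p]{\gamma}\), so \(\alpha \in \mathcal{O}_{L}\), and write \(\phi = \Frob{L/K}{\mathfrak{q}_{K}}\) and \(Q = \Norm\mathfrak{q}_{K}\). Then \(\phi(\alpha) \equiv \alpha^{Q} \pmod{\mathfrak{q}_{L}}\), and \(\phi(\alpha) = \zeta\alpha\) for some \(\zeta \in \mu_{p}\). Since \(\mathfrak{q}_{K} \nmid p\gamma\), the element \(\alpha\) is a unit at \(\mathfrak{q}_{L}\), which gives \(\zeta \equiv \alpha^{Q-1} = \gamma^{(Q-1)/p} \pmod{\mathfrak{q}_{L}}\). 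Here \(p \mid Q - 1\) because \(\mu_{p}\) injects into \((\mathcal{O}_{K}/\mathfrak{q}_{K})^{\times}\). That injection holds since \(\prod_{j=1}^{p-1}(1-\zeta_{p}^{j}) = p \notin \mathfrak{q}_{K}\). By the same injectivity, \(\zeta = 1\) holds iff \(\gamma^{(Q-1)/p} \equiv 1 \pmod{\mathfrak{q}_{K}}\). In the cyclic group \((\mathcal{O}_{K}/\mathfrak{q}_{K})^{\times}\) of order \(Q-1\), this is the Euler criterion for \(\gamma\) being a \(p\)-th power. So \(\phi = \id_{L}\) iff (i') holds.

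\emph{Step (i)\(\Leftrightarrow\)(i').} The residue field \(k_{K} = \mathcal{O}_{K}/\mathfrak{q}_{K}\) is a finite extension of \(k_{F} = \mathcal{O}_{F}/\mathfrak{q}\) of degree \(f = f(\mathfrak{q}_{K}/\mathfrak{q})\). This degree divides \([K:F]\), and \([K:F]\) divides \(p-1\), so \(\gcd(f,p)=1\). Lemma~\ref{lem:coprime-degree-nth-powers}, applied to \(k_{K}/k_{F}\) with \(n = p\), then gives \(k_{K}^{\times p} \cap k_{F}^{\times} = k_{F}^{\times p}\). Since \(\bar\gamma \in k_{F}^{\times}\), it is a \(p\)-th power in \(k_{K}\) iff it is one in \(k_{F}\).

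I expect the main care to be needed in Step (ii)\(\Leftrightarrow\)(i'). That step relies on the injectivity of \(\mu_{p}\) modulo \(\mathfrak{q}_{K}\), on the fact that \(\alpha\) is a \(\mathfrak{q}_{L}\)-unit, and on correctly identifying \(\gamma^{(Q-1)/p}\) with the power-residue test.
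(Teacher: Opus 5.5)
Your proof is correct and follows essentially the same route as the paper. Both use Lemma~\ref{lem:kummer-unramified} for unramifiedness, reduce (i) to \(p\)-th power residuosity in \(\mathcal{O}_{K}/\mathfrak{q}_{K}\) via Lemma~\ref{lem:coprime-degree-nth-powers} (with \(f \mid [K:F] \mid p-1\)), compute the Frobenius on \(\alpha = \sqrt[p]{\gamma}\), and use the decomposition group for (ii)\(\Leftrightarrow\)(iii). The only difference is cosmetic: you decide whether the Frobenius fixes \(\alpha\) by Euler's criterion \(\zeta \equiv \gamma^{(Q-1)/p}\). The paper instead notes that \(\phi(\alpha) = \alpha\) iff \(\bar{\alpha}^{Q} = \bar{\alpha}\), because the roots of \(X^{p} - \gamma\) are distinct modulo \(\mathfrak{q}_{L}\). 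That holds iff \(\bar{\alpha}\) lies in the residue field of \(K\), which, since \(\mu_{p} \subseteq K\), holds iff \(\bar{\gamma}\) is a \(p\)-th power there.
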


\begin{proof}
Put \(\alpha = \sqrt[p]{\gamma}\).
Since \(\mu_{p} \subseteq K\) and \(\gamma \notin K^{\times p}\),
the extension \(L/K\) is cyclic of degree \(p\) by Proposition~\ref{prop:kummer-basic}~(iii).
In addition, since \(\mathfrak{q}_{K} \nmid p\gamma\),
the prime ideal \(\mathfrak{q}_{K}\) is unramified in \(L/K\) by Lemma~\ref{lem:kummer-unramified}.

First, put
\[
\kappa = \mathcal{O}_{F}/\mathfrak{q}, \qquad
\kappa' = \mathcal{O}_{K}/\mathfrak{q}_{K}, \qquad f = [\kappa':\kappa].
\]
Since \(K/F\) is a Galois extension and
\(f \mid [K:F] \mid p - 1\), we see that \(\gcd(f, p) = 1\).
Let \(\bar{\gamma} \in \kappa^{\times}\) be the residue class of \(\gamma\) modulo \(\mathfrak{q}\).
By applying Lemma~\ref{lem:coprime-degree-nth-powers} to
the extension \(\kappa'/\kappa\) and the exponent \(p\), we obtain
\[
(\kappa')^{\times p} \cap \kappa^{\times} = \kappa^{\times p}.
\]
Therefore, condition (i) is equivalent to
\(\bar{\gamma} \in (\kappa')^{\times p}\).

Let \(\mathfrak{q}_{L}\) be a prime ideal of \(L\) lying above \(\mathfrak{q}_{K}\),
and put \(\sigma = \Frob{L/K}{\mathfrak{q}_{K}}\).
In what follows, \(\bar{\alpha}\) denotes the residue class of \(\alpha\) modulo \(\mathfrak{q}_{L}\).
Since \(\mathfrak{q}_{K} \nmid p\gamma\),
the roots of \(X^{p} - \gamma\) have mutually distinct residue classes modulo \(\mathfrak{q}_{L}\).
Thus, by the definition of the Frobenius automorphism,
\(\sigma(\alpha) = \alpha\) if and only if \(\bar{\alpha}^{|\kappa'|} = \bar{\alpha}\),
which is equivalent to \(\bar{\alpha} \in \kappa'\).
Furthermore, since \(\mu_{p} \subseteq K\),
if \(X^{p} - \bar{\gamma}\) has a root in \(\kappa'\),
then all of its roots belong to \(\kappa'\).
Hence \(\bar{\alpha} \in \kappa'\) is equivalent to
\(\bar{\gamma} \in (\kappa')^{\times p}\).
Since \(L = K(\alpha)\), the equivalence of (i) and (ii) follows from the above.

Finally, since the decomposition group of an unramified prime ideal is generated by the Frobenius automorphism,
(ii) and (iii) are equivalent.
\end{proof}

\subsection{The \(2\)-part of the class number via genus theory}\label{subsec:genus-theory}

The \(2\)-part of the class number can be controlled using Gauss's genus theory, independently of Kummer theory.

\begin{lemma}[The \(2\)-part via genus theory]\label{lem:genus-two-part}
Let \(K\) be a quadratic field, and put \(s = \omega(d_{K})\). Then the \(2\)-rank of \(\Cl^{+}(K)\) is \(s - 1\); that is,
\[
\dim_{\FF_{2}}\bigl(\Cl^{+}(K)/\Cl^{+}(K)^{2}\bigr) = s - 1,
\]
and therefore
\[
2^{s - 1} \mid h^{+}(K).
\]
In particular:
\begin{enumerate}[label=\textrm{(\roman*)}]
\item If \(K\) is an imaginary quadratic field, then \(2^{s - 1} \mid h(K)\).
\item If \(K\) is a real quadratic field and \(s \ge 2\), then \(2^{s - 2} \mid h(K)\).
\end{enumerate}
\end{lemma}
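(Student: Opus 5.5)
The main assertion, that the \(2\)-rank of \(\Cl^{+}(K)\) equals \(s - 1\), is Gauss's genus theory. I would quote it from a standard reference, for instance the principal genus theorem for quadratic fields in the ideal-theoretic form (Hasse, or Lemmermeyer's \emph{Reciprocity Laws}, Chapter~2). If a self-contained argument is wanted, the cleanest route runs through the narrow genus field. Let \(d_{K} = d_{1} \cdots d_{s}\) be the factorization into prime discriminants. The field
\[
K_{\mathrm{gen}} = \QQ\bigl(\sqrt{d_{1}}, \dots, \sqrt{d_{s}}\bigr)
\]
is the maximal extension of \(K\) that is abelian over \(\QQ\) and unramified over \(K\) at all finite primes. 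It has degree \(2^{s - 1}\) over \(K\), and \(\Gal(K_{\mathrm{gen}}/K)\) is elementary abelian of exponent \(2\).

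By class field theory, \(K_{\mathrm{gen}}\) corresponds to the subgroup \(\Cl^{+}(K)^{2}\). The key input here is the principal genus theorem: the kernel of the genus character map is exactly the group of squares, which one can see via Hasse's norm theorem. Hence \(\Cl^{+}(K)/\Cl^{+}(K)^{2} \cong (\ZZ/2\ZZ)^{s - 1}\). By the definition of \(p\)-rank in Section~\ref{sec:notation}, this gives \(2^{s - 1} \mid h^{+}(K)\). I expect the identification of the genus field with the squares to be the only substantive step, and I would cite it rather than reprove it.

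The two consequences are then formal. For (i), \(\Cl^{+}(K) = \Cl(K)\) for an imaginary quadratic field, as recalled in Section~\ref{sec:notation}. For (ii), there is a natural surjection \(\Cl^{+}(K) \to \Cl(K)\). Its kernel is the image of the principal ideals modulo those with a totally positive generator, and this kernel is generated by the class of \((\sqrt{d_{K}})\). The kernel has order \(1\) or \(2\): if \(\alpha\) is any generator, then \(\alpha\) or \(-\alpha\) is positive at the first real embedding, and the square of any principal ideal class lies in the trivial narrow class because \(\alpha^{2}\) is totally positive. Hence \(h^{+}(K) \in \{h(K), 2h(K)\}\). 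So \(2^{s - 1} \mid h^{+}(K) \mid 2h(K)\), which gives \(2^{s - 2} \mid h(K)\) when \(s \ge 2\).
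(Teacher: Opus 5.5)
Your proposal is correct and follows essentially the same route as the paper. Both cite genus theory for the \(2\)-rank of \(\Cl^{+}(K)\), use \(\Cl^{+}(K) = \Cl(K)\) in the imaginary case, and in the real case deduce \(2^{s-2} \mid h(K)\) from \(h^{+}(K) \in \{h(K), 2h(K)\}\); the only difference is that you sketch the genus-field identification and the bound \(\bigl|\ker\bigl(\Cl^{+}(K) \to \Cl(K)\bigr)\bigr| \le 2\) (normalizing signs with the unit \(-1\)), where the paper simply cites Lemmermeyer for both.
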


\begin{proof}
The first equality is a well-known fact in Gauss's genus theory (see, for example, \cite[\S2.2]{Lemmermeyer2000}).
Therefore \(2^{s - 1}\) divides \(h^{+}(K)\).
For imaginary quadratic fields, \(\Cl^{+}(K) = \Cl(K)\), and (i) follows.
For real quadratic fields, the kernel of the natural surjection
\[
\Cl^{+}(K) \longrightarrow \Cl(K)
\]
onto the ordinary class group has order \(1\) or \(2\) (see, for example, \cite[Corollary~2.7]{Lemmermeyer2000}), so that
\[
h^{+}(K) \in \{h(K), 2h(K)\}.
\]
Thus \(v_{2}(h(K)) \ge s - 2\), and (ii) follows.
\end{proof}

\subsection{Transfer of residuosity to the fundamental unit}\label{subsec:unit-residuosity}

The fundamental unit of a real quadratic field cannot be determined explicitly in general;
however, if an explicit unit of infinite order is available, the residuosity of the fundamental unit can be deduced from that of the explicit unit.

\begin{lemma}[Residuosity of the fundamental unit via an explicit unit]\label{lem:unit-residuosity-transfer}
Let \(F\) be a real quadratic field, \(\varepsilon\) its fundamental unit, and \(p\) an odd prime. Let \(\eta \in \mathcal{O}_{F}^{\times}\) be a unit of infinite order. Suppose that there exist two prime ideals \(\mathfrak{l}\), \(\mathfrak{q}\) of \(F\) such that
\[
\eta \in (\mathcal{O}_{F}/\mathfrak{l})^{\times p}, \qquad
\eta \notin (\mathcal{O}_{F}/\mathfrak{q})^{\times p}.
\]
Then we have
\[
\varepsilon \in (\mathcal{O}_{F}/\mathfrak{l})^{\times p}.
\]
\end{lemma}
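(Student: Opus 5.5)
Since \(\eta\) is a unit of infinite order, write \(\eta = \pm\varepsilon^{k}\) with \(k \in \ZZ \setminus \{0\}\), as permitted by the description \(\mathcal{O}_{F}^{\times} = \{\pm 1\} \times \langle \varepsilon \rangle\). The plan is to prove first that \(p \nmid k\), using the prime \(\mathfrak{q}\), and then to transfer the residuosity of \(\eta\) at \(\mathfrak{l}\) to \(\varepsilon\) by inverting \(k\) modulo \(p\).

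\emph{Step 1: \(-1\) is a \(p\)-th power.} Because \(p\) is odd, \(-1 = (-1)^{p}\), so the residue class of \(-1\) lies in \((\mathcal{O}_{F}/\mathfrak{r})^{\times p}\) for every prime ideal \(\mathfrak{r}\). The residue classes of units are always defined, since units have valuation \(0\) everywhere. Hence, for any prime ideal \(\mathfrak{r}\), the element \(\eta\) is a \(p\)-th power residue modulo \(\mathfrak{r}\) if and only if \(\varepsilon^{k}\) is.

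\emph{Step 2: \(p \nmid k\).} Suppose instead that \(p \mid k\), say \(k = pk'\). Then \(\eta = \pm(\varepsilon^{k'})^{p} = (\pm\varepsilon^{k'})^{p}\) is a \(p\)-th power in \(\mathcal{O}_{F}^{\times}\). Its image modulo \(\mathfrak{q}\) would then lie in \((\mathcal{O}_{F}/\mathfrak{q})^{\times p}\), which contradicts the hypothesis on \(\mathfrak{q}\). Therefore \(\gcd(k, p) = 1\).

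\emph{Step 3: transfer at \(\mathfrak{l}\).} Choose integers \(x, y\) with \(kx + py = 1\). Modulo \(\mathfrak{l}\) we have \(\bar\varepsilon = (\bar\varepsilon^{k})^{x}(\bar\varepsilon^{y})^{p}\). By Step 1, \(\bar\varepsilon^{k} = \pm\bar\eta\) is a \(p\)-th power, and \((\mathcal{O}_{F}/\mathfrak{l})^{\times p}\) is a subgroup, so \(\bar\varepsilon\) is a \(p\)-th power. This gives \(\varepsilon \in (\mathcal{O}_{F}/\mathfrak{l})^{\times p}\).

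There is no real obstacle in this argument. The only points requiring care are the sign \(\pm 1\), which is absorbed because \(p\) is odd, and the observation that a global \(p\)-th power reduces to a local \(p\)-th power. The latter is exactly what the auxiliary prime \(\mathfrak{q}\) is used for.
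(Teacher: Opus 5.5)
Your proof is correct and follows essentially the same route as the paper: write \(\eta = \pm\varepsilon^{k}\), absorb the sign because \(p\) is odd, use \(\mathfrak{q}\) to rule out \(p \mid k\), and then invert \(k\) modulo \(p\) at \(\mathfrak{l}\). The only cosmetic difference is in the last step, where the paper uses the quotient map \((\mathcal{O}_{F}/\mathfrak{l})^{\times} \to (\mathcal{O}_{F}/\mathfrak{l})^{\times}/(\mathcal{O}_{F}/\mathfrak{l})^{\times p}\) and you use an explicit B\'ezout identity \(kx + py = 1\).
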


\begin{proof}
Since the unit group of a real quadratic field is \(\{\pm 1\} \times \langle \varepsilon \rangle\), we can write
\[
\eta = \pm\varepsilon^{\nu}
\]
for some \(\nu \in \ZZ \setminus \{0\}\). Since \(p\) is odd, \(-1 = (-1)^{p}\) is a \(p\)-th power. If \(p \mid \nu\), then \(\eta\) is a \(p\)-th power in \(F\), and thus it is also a \(p\)-th power modulo \(\mathfrak{q}\). This contradicts the assumption. Hence \(p \nmid \nu\).

Let
\[
G = (\mathcal{O}_{F}/\mathfrak{l})^{\times}
\]
be the multiplicative group of the finite field \(\mathcal{O}_{F}/\mathfrak{l}\), and consider the quotient \(G/G^{p}\) of \(G\) by its subgroup \(G^{p}\) of \(p\)-th powers. This quotient is cyclic of order \(1\) or \(p\).
Let \(\pi\colon G \to G/G^{p}\) be the natural quotient map. If we write \(G/G^{p}\) additively, then
\[
\pi(\eta) = \nu\pi(\varepsilon).
\]
By assumption, \(\pi(\eta) = 0\), and since \(\nu\) is invertible modulo \(p\), we obtain \(\pi(\varepsilon) = 0\).
\end{proof}

\subsection{Criterion for the existence of an element of order \(n\) in the class group}\label{subsec:class-group-criterion}

The following theorem is the main tool for handling the odd part \(n\).

\begin{theorem}[Class group criterion for real quadratic fields]\label{thm:class-group-criterion}
Let \(D\) be a positive non-square integer, and put \(F = \QQ(\sqrt{D})\).
Let \(n \ge 3\) be an odd integer, and let \(p_{1}, p_{2}, \dots, p_{\omega(n)}\) be all the distinct prime factors of \(n\).
Suppose that integers \(X, Z\) satisfy
\begin{equation}\label{eq:XZ-equation}
X^{2} - D = 4XZ^{n},
\qquad \gcd(X, Z) = 1.
\end{equation}
Suppose moreover that, for each \(i \in \{1, 2, \dots, \omega(n)\}\), there exists a prime \(l_{i}\) satisfying the following conditions.
\begin{enumerate}
\item[\textrm{(C1)}] \(l_{i} \equiv 1 \pmod{p_{i}}\).
\item[\textrm{(C2)}] The integer \(X\) is a \(p_{i}\)-th power non-residue modulo \(l_{i}\).
\item[\textrm{(C3)}] The prime \(l_{i}\) divides \(Z\).
\item[\textrm{(C4)}] The fundamental unit \(\varepsilon\) of \(F\) is a \(p_{i}\)-th power residue modulo every prime ideal of \(F\) lying above \(l_{i}\).
\end{enumerate}
Then the class group of \(F\) contains an element of order \(n\).
\end{theorem}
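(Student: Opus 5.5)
The plan is Yamamoto's ideal-theoretic argument: build an explicit ideal whose \(2n\)-th power is principal with an explicit generator, and use (C1)--(C4) to rule out every proper divisor of \(n\) as the order of its square. Put \(\alpha = X + \sqrt{D}\) and \(\bar\alpha = X - \sqrt{D}\), so that
\[
\alpha + \bar\alpha = 2X, \qquad \alpha\bar\alpha = X^{2} - D = 4XZ^{n}.
\]
Let \(\mathfrak{g} = \alpha\mathcal{O}_{F} + \bar\alpha\mathcal{O}_{F}\). It is stable under conjugation and divides \((2X)\). Since \(\gcd(X, Z) = 1\), a prime dividing both \(\mathfrak{g}\) and \(Z\) can only lie above \(2\).

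I would first factor \((\alpha) = \mathfrak{g}\,\mathfrak{c}\,\mathfrak{a}^{n}\) and \((\bar\alpha) = \mathfrak{g}\,\mathfrak{c}'\,\mathfrak{a}'^{n}\). Here \(\mathfrak{a}\) collects the primes above divisors of \(Z\), and \(\mathfrak{c}, \mathfrak{c}'\) collect what remains above \(2X\). Taking quotients, the common factor \(\mathfrak{g}\) cancels. One should then get \((\alpha/\bar\alpha) = (\mathfrak{a}/\mathfrak{a}')^{n}\), up to a harmless contribution at \(2\) and at divisors of \(X\). With \(\mathfrak{a}\mathfrak{a}' = (Z)\), this gives
\[
\mathfrak{a}^{2n} = \Bigl(\frac{\alpha}{\bar\alpha}\,Z^{n}\Bigr) = \Bigl(\frac{\alpha^{2}}{4X}\Bigr).
\]
So the class \(c = [\mathfrak{a}^{2}]\) has order dividing \(n\), and it suffices to show that \(c^{n/p_{i}} \ne 1\) for each \(i\).

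Suppose instead that \(\mathfrak{a}^{2n/p_{i}} = (\beta)\) for some \(\beta \in F^{\times}\). Comparing the two generators of \(\mathfrak{a}^{2n}\) gives
\[
\frac{\alpha^{2}}{4X} = \pm\varepsilon^{k}\beta^{p_{i}}
\]
for some integer \(k\). By (C3), \(l_{i} \mid Z\), so \(l_{i} \mid \alpha\bar\alpha\). Since \(l_{i} \nmid 2X\), there is a prime \(\mathfrak{l}\) of \(F\) above \(l_{i}\) with \(\mathfrak{l} \mid \bar\alpha\) and \(\mathfrak{l} \nmid \alpha\). Here \(l_{i}\) is odd by (C1) and \(\gcd(X, Z) = 1\). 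Modulo \(\mathfrak{l}\) we then have \(\alpha \equiv 2X\), hence
\[
\frac{\alpha^{2}}{4X} \equiv X \pmod{\mathfrak{l}}.
\]
We may take \(\beta\) to be an \(\mathfrak{l}\)-unit: choose \(\mathfrak{a}\) to be the part of \(Z\) supported on the primes dividing \(\alpha\), so that \(\mathfrak{l} \nmid \mathfrak{a}\). Then \(X \equiv \pm\varepsilon^{k}\beta^{p_{i}} \pmod{\mathfrak{l}}\).

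Now \(-1\) is a \(p_{i}\)-th power since \(p_{i}\) is odd, and \(\varepsilon\) is a \(p_{i}\)-th power residue modulo \(\mathfrak{l}\) by (C4). Hence \(X\) is a \(p_{i}\)-th power in \(\mathcal{O}_{F}/\mathfrak{l}\). This residue field has degree \(1\) or \(2\) over \(\FF_{l_{i}}\), which is coprime to \(p_{i}\). Lemma~\ref{lem:coprime-degree-nth-powers} therefore gives \(X \in \FF_{l_{i}}^{\times p_{i}}\), contradicting (C2). Hence \(c\) has order exactly \(n\).

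I expect the main obstacle to be the ideal bookkeeping. The decomposition \((\alpha) = \mathfrak{g}\,\mathfrak{c}\,\mathfrak{a}^{n}\) needs the primes dividing \(Z\) to separate cleanly between \(\alpha\) and \(\bar\alpha\). That is automatic away from \(2\), but \(Z\) may be even and \(4 \mid \alpha\bar\alpha\), so the dyadic primes need care. I would first try to absorb all contributions at \(2\) and at divisors of \(X\) into the explicit generator \(\alpha^{2}/(4X)\). I would then check that the identity \(\mathfrak{a}^{2n} = (\alpha^{2}/(4X))\) holds exactly, possibly with \(\mathfrak{a}\) replaced by its odd part and \(2\)-power factors moved into a rational principal ideal. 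Otherwise, I would restrict \(\mathfrak{a}\) to the prime-to-\(2\) part of \(Z\) and check that the discrepancy is principal and generated by an element that is a \(p_{i}\)-th power modulo \(\mathfrak{l}\).
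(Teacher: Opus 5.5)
Your overall strategy and the local half of the argument are correct and agree with the paper. Choosing $\mathfrak{l}$ above $l_i$ with $\mathfrak{l}\mid\bar\alpha$ is the paper's choice $\sqrt{D}\equiv X \pmod{\mathfrak{l}_i}$. The reduction $\alpha^{2}/(4X)\equiv X \pmod{\mathfrak{l}}$, the use of (C4) and of $-1=(-1)^{p_i}$, and the conclusion that $[\mathfrak{a}^{2}]$ has order exactly $n$ are all fine.

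The gap is the step everything rests on: the existence of an integral ideal $\mathfrak{a}$ with $\mathfrak{a}^{2n}=(\alpha^{2}/(4X))$. You assert the factorization $(\alpha)=\mathfrak{g}\,\mathfrak{c}\,\mathfrak{a}^{n}$ and the equality of quotients "up to a harmless contribution at $2$ and at divisors of $X$" without deriving them, and you leave the dyadic primes open. Your fallback does not work as stated. If $Z$ is even, then $X$ is odd and $D\equiv X^{2}\equiv 1 \pmod{8}$, so $2$ splits in $F$. Discarding the dyadic part of $\mathfrak{a}$ then leaves a discrepancy equal to a power of a single prime above $2$, and that ideal need not be principal. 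Also, $\mathfrak{a}$ is not something you may choose: it is determined by $\mathfrak{a}^{2n}=(\alpha^{2}/(4X))$.

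The missing idea is that $\alpha^{2}/(4X)$ is itself an algebraic integer, so no bookkeeping is needed. Using $D=X^{2}-4XZ^{n}$, you get
\[
\alpha^{2}=2X\bigl(X-2Z^{n}+\sqrt{D}\bigr),\qquad \gamma:=\frac{\alpha^{2}}{4X}=\frac{X-2Z^{n}+\sqrt{D}}{2},
\]
with $\gamma+\bar\gamma=X-2Z^{n}$ and $\gamma\bar\gamma=Z^{2n}$. Hence $\gamma\in\mathcal{O}_{F}$. A prime ideal dividing both $\gamma$ and $\bar\gamma$ lies over a rational prime dividing both $Z$ and $X-2Z^{n}$, hence $X$, which is impossible; this holds at $2$ as well. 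Coprimality together with $(\gamma)(\bar\gamma)=(Z)^{2n}$ gives $(\gamma)=\mathfrak{a}^{2n}$ at once. Moreover $\mathfrak{l}\nmid\mathfrak{a}$ follows automatically from $v_{\mathfrak{l}}(\gamma)=0$. This $\gamma$ is exactly the element $\alpha$ of the paper's proof, and with it inserted your argument becomes the paper's.
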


\begin{proof}
Let \(\sigma\) be the nontrivial automorphism of \(F/\QQ\), and put
\[
\alpha = \frac{X - 2Z^{n} + \sqrt{D}}{2}.
\]
By~\eqref{eq:XZ-equation},
\[
\alpha + \alpha^{\sigma} = X - 2Z^{n},
\qquad
\alpha\alpha^{\sigma} = Z^{2n}.
\]
Therefore,
\[
\alpha^{2} - (X - 2Z^{n})\alpha + Z^{2n} = 0,
\]
and since \(\alpha\) is a root of a monic quadratic polynomial with integer coefficients, we conclude that \(\alpha \in \mathcal{O}_{F}\).

Suppose that there exists a prime ideal \(\mathfrak{P}\) dividing both \((\alpha)\) and \((\alpha^{\sigma})\), and put \(\mathfrak{P} \cap \ZZ = (r)\).
Since \(\mathfrak{P}\) divides \(\alpha\alpha^{\sigma} = Z^{2n}\) and \(\alpha + \alpha^{\sigma} = X - 2Z^{n}\), we see that \(r \mid Z\) and \(r \mid X\), which contradicts \(\gcd(X, Z) = 1\).
Hence \((\alpha)\) and \((\alpha^{\sigma})\) are coprime. If \(Z = 0\), then \(D = X^{2}\) is a square by~\eqref{eq:XZ-equation}, contrary to the assumption; hence \(Z \ne 0\), so that \(\alpha\alpha^{\sigma} = Z^{2n} \ne 0\). On the other hand,
\[
(\alpha)(\alpha^{\sigma}) = (Z)^{2n}.
\]
Consequently, by the unique factorization of ideals and the coprimality of \((\alpha)\) and \((\alpha^{\sigma})\), the exponent of each prime ideal appearing in \((\alpha)\) is a multiple of \(2n\).
Hence there exists an integral ideal \(\mathfrak{a}\) such that
\[
(\alpha) = \mathfrak{a}^{2n}.
\]

Fix \(i \in \{1, 2, \dots, \omega(n)\}\). By condition (C2), \(l_{i} \nmid X\), and by condition (C3) and~\eqref{eq:XZ-equation},
\[
D \equiv X^{2} \not\equiv 0 \pmod{l_{i}}.
\]
Note also that \(l_{i}\) is odd by condition (C1) and the fact that \(p_{i}\) is an odd prime.
Since \(l_{i} \nmid D\) and \(D\) is a square modulo \(l_{i}\), the prime \(l_{i}\) splits completely in \(F/\QQ\).
Choose a prime ideal \(\mathfrak{l}_{i}\) lying above \(l_{i}\) such that
\[
\sqrt{D} \equiv X \pmod{\mathfrak{l}_{i}}.
\]
Since \(l_{i}\) is odd, \(2\) is invertible modulo \(\mathfrak{l}_{i}\), and since \(Z^{n} \equiv 0 \pmod{\mathfrak{l}_{i}}\) by condition (C3), we find
\[
\alpha = \frac{X - 2Z^{n} + \sqrt{D}}{2} \equiv \frac{X + X}{2} = X \pmod{\mathfrak{l}_{i}}.
\]
Hence, by condition (C2), \(\alpha\) is a \(p_{i}\)-th power non-residue modulo \(\mathfrak{l}_{i}\). In particular, \(\alpha \not\equiv 0 \pmod{\mathfrak{l}_{i}}\).

If \(\mathfrak{a}^{2n/p_{i}}\) is principal, then there exist \(\beta \in \mathcal{O}_{F}\) and \(s \in \ZZ\) such that
\[
\mathfrak{a}^{2n/p_{i}} = (\beta),
\qquad
\alpha = \pm\varepsilon^{s}\beta^{p_{i}}.
\]
Since \(\alpha \not\equiv 0 \pmod{\mathfrak{l}_{i}}\), we get \(\beta \not\equiv 0 \pmod{\mathfrak{l}_{i}}\), and \(\beta^{p_{i}}\) is a \(p_{i}\)-th power residue modulo \(\mathfrak{l}_{i}\).
By condition (C4), \(\varepsilon\) is a \(p_{i}\)-th power residue modulo \(\mathfrak{l}_{i}\), and since \(p_{i}\) is odd, \(-1 = (-1)^{p_{i}}\) is also a \(p_{i}\)-th power.
Therefore the right-hand side is a \(p_{i}\)-th power residue modulo \(\mathfrak{l}_{i}\), contradicting the fact that \(\alpha\) is a \(p_{i}\)-th power non-residue modulo \(\mathfrak{l}_{i}\).
Hence
\[
[\mathfrak{a}]^{2n/p_{i}} \ne 1.
\]

Let \(d\) be the order of \([\mathfrak{a}]\). Then \(d \mid 2n\).
If \(n \nmid d\), then \(d \mid 2n/p_{i}\) for some \(i\), so that \([\mathfrak{a}]^{2n/p_{i}} = 1\), a contradiction.
Thus \(n \mid d\), and \(d = n\) or \(d = 2n\).
In the former case \([\mathfrak{a}]\) has order \(n\), and in the latter case \([\mathfrak{a}]^{2}\) has order \(n\).
\end{proof}

\begin{remark}\label{rem:class-group-criterion-xie-chao}
Theorem~\ref{thm:class-group-criterion} corresponds to the special case of
\cite[Theorem~2.3]{XieChao2020}
in which \(m = k = 1\) and \(Y = 1\) in the notation of that paper.
To make explicit where the condition on the fundamental unit enters the argument, we have given a direct proof of the special case needed here.
\end{remark}

\subsection{A consequence of Siegel's theorem}\label{subsec:siegel}

The following lemma is a consequence of Siegel's theorem on the finiteness of integral points on smooth algebraic curves of genus at least \(1\).

\begin{lemma}[{Xie--Chao \cite[Lemma~4.1]{XieChao2022}}]\label{lem:siegel-quadratic-fields}
Suppose that \(f(X) \in \QQ[X]\) is a polynomial of degree \(d \ge 3\) with distinct roots in \(\CC\),
and let \(\mathcal{U} \subseteq \ZZ\) be an infinite set of integers. Set
\[
\mathcal{E} = \{\QQ(\sqrt{f(t)}) : t \in \mathcal{U}\}.
\]
Then \(\mathcal{E}\) contains infinitely many distinct quadratic fields.
\end{lemma}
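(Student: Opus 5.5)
This lemma is quoted from Xie--Chao, and we prove it by contradiction via Siegel's theorem. Suppose that \(\mathcal{E}\) contains only finitely many distinct quadratic fields, say \(\QQ(\sqrt{d_{1}}), \dots, \QQ(\sqrt{d_{r}})\), with \(d_{j}\) square-free integers. We include \(d = 1\) to account for values \(f(t)\) that are rational squares, and we discard the finitely many \(t\) with \(f(t) = 0\). Then for every \(t \in \mathcal{U}\) with \(f(t) \ne 0\) there is an index \(j\) and a rational number \(y\) with \(f(t) = d_{j}y^{2}\). Since \(\mathcal{U}\) is infinite, the pigeonhole principle gives one fixed \(j\) and an infinite subset \(\mathcal{U}' \subseteq \mathcal{U}\) such that \(d_{j}f(t)\) is the square of a rational number for all \(t \in \mathcal{U}'\).

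Next we clear denominators to obtain integral points on a curve. Let \(c\) be a positive integer with \(cf(X) \in \ZZ[X]\), and put \(g(X) = c^{2}d_{j}f(X) \in \ZZ[X]\). For \(t \in \mathcal{U}'\), the number \(g(t) = (cd_{j})\cdot(cf(t)) \cdot\) (suitably arranged) is an integer which is a rational square. Concretely, \(c^{2}d_{j}^{2}f(t) = (c d_{j} y)^{2}\), and the left side equals \(d_{j} \cdot c \cdot (c d_{j} f(t)/1)\); to keep it simple we use \(G(X) = c^{2}d_{j}^{2}f(X)\) instead. It has integer values at integers, and these values are squares of rationals, hence squares of integers. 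Thus each \(t \in \mathcal{U}'\) gives an integer point \((t, s_{t})\) on the affine curve
\[
C\colon\ Y^{2} = G(X).
\]
Because \(G\) is a nonzero constant multiple of \(f\), it has degree \(d \ge 3\) and distinct roots. So \(C\) is an affine model of a smooth curve of genus \(\lfloor (d-1)/2 \rfloor \ge 1\).

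By Siegel's theorem, \(C\) has only finitely many integral points. On the other hand, the points \((t, s_{t})\) for \(t \in \mathcal{U}'\) have pairwise distinct \(X\)-coordinates, so there are infinitely many of them. This contradiction shows that \(\mathcal{E}\) contains infinitely many distinct quadratic fields.

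The main point needing care is the appeal to Siegel's theorem. Here the key input is that squarefreeness of \(G\) (distinct roots) together with \(d \ge 3\) forces genus at least \(1\), which is exactly what rules out the genus-zero exceptions. Everything else is bookkeeping: the pigeonhole step and the clearing of denominators.
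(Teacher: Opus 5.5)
Your strategy is the standard argument behind this lemma: pigeonhole the values \(f(t)\) into a single square class \(d_{j}\), clear denominators, and apply Siegel's theorem to the genus \(\ge 1\) curve \(Y^{2} = g(X)\). The paper gives no proof of its own; it cites Xie--Chao and remarks that the lemma is a consequence of Siegel's theorem.

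However, the denominator-clearing step, in the form you finally chose, is wrong. From \(f(t) = d_{j}y^{2}\) you get
\(c^{2}d_{j}^{2}f(t) = c^{2}d_{j}^{3}y^{2} = d_{j}(cd_{j}y)^{2}\), not \((cd_{j}y)^{2}\). So for \(G(X) = c^{2}d_{j}^{2}f(X)\) the value \(G(t)\) is \(d_{j}\) times a square, which is not a square when \(d_{j} \ne 1\). The integral points \((t, s_{t})\) on \(Y^{2} = G(X)\) that your contradiction relies on therefore need not exist. The stated identity \(c^{2}d_{j}^{2}f(t) = (cd_{j}y)^{2}\) is simply false.

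The polynomial you first wrote down is the right one. Take \(g(X) = c^{2}d_{j}f(X)\). Then \(g = cd_{j}\cdot(cf) \in \ZZ[X]\), and \(g(t) = c^{2}d_{j}\cdot d_{j}y^{2} = (cd_{j}y)^{2}\). This is an integer that is the square of a rational number, hence the square of an integer. Since \(g\) is a nonzero constant multiple of \(f\), it has degree \(d \ge 3\) and distinct roots. The rest of your argument then goes through unchanged: the genus is \(\lfloor (d-1)/2 \rfloor \ge 1\), Siegel's theorem gives finitely many integral points, and the \(X\)-coordinates are distinct. With this one correction the proof is complete.
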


\section{Construction of pairs of real quadratic fields}\label{sec:construction}

Let \(e \ge 0\) be an integer and \(n \ge 3\) an odd integer, and let
\(p_{1}, p_{2}, \dots, p_{\omega(n)}\) be all the distinct prime factors of \(n\).
Since \(n\) is odd, each \(p_{i}\) is an odd prime.
Let \(m \ge 1\) be an integer.
All the statements in this section and in Sections~\ref{sec:divisibility}--\ref{sec:infinitude} are made under these assumptions and this notation,
together with the quantities \(H\), \(\lambda_{j, \pm}\), \(\xi_{j, k}\), \(l_{i, j}\), and \(q_{i, j}\), which will be fixed successively from this section on.
The general auxiliary results needed here are collected in Section~\ref{sec:preliminaries}.
From this section onward, we construct elements of order \(n\) in the class groups to handle the odd part, while simultaneously ensuring divisibility of the class numbers by \(2^{e}\) via genus theory.
First, put
\begin{equation}\label{eq:def-c}
c_{1} = 4m + 1, \qquad c_{2} = 4m - 1.
\end{equation}
This gives
\begin{equation}\label{eq:c-square-difference}
c_{1}^{2} - c_{2}^{2} = 16m.
\end{equation}

\begin{lemma}\label{lem:choice-of-H}
There exist an odd integer \(H\) and odd primes
\[
\lambda_{j, -},\ \lambda_{j, +},\ \xi_{j, 1}, \dots, \xi_{j, e}
\qquad (j \in \{1, 2\})
\]
satisfying the following conditions, where all the primes displayed are required to be mutually distinct
(when \(e = 0\), there are no primes \(\xi_{j, k}\)).
\begin{enumerate}[label=\textrm{(\roman*)}]
\item \(H > c_{1}\), \(H \equiv c_{1} \pmod{8}\), and \(\gcd(H, c_{1}c_{2}) = 1\).
\item \(v_{\lambda_{j, -}}(H^{n} - c_{j}) = 1\) and \(v_{\lambda_{j, +}}(H^{n} + c_{j}) = 1\) (\(j = 1, 2\)).
\item \(v_{\xi_{j, k}}(H^{n} - c_{j}) = 1\) (\(j = 1, 2\), \(1 \le k \le e\)).
\item None of these primes divides \(2nmc_{1}c_{2}\).
\end{enumerate}
\end{lemma}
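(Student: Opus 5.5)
The plan is to choose all the primes first and then obtain \(H\) by a single application of the Chinese remainder theorem.

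For each \(j \in \{1, 2\}\) consider the four polynomials
\[
f_{j, -}(X) = X^{n} - c_{j}, \qquad f_{j, +}(X) = X^{n} + c_{j}.
\]
Each is nonconstant with integer coefficients. By Schur's classical argument, each therefore has a root modulo infinitely many primes. (Alternatively, apply Corollary~\ref{cor:chebotarev-rational} to the splitting field of \(f_{j, \pm}\), taking the identity class, which gives primes splitting completely.)

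Using this, I will choose \(2e + 4\) mutually distinct odd primes step by step:
\begin{itemize}
\item one prime \(\lambda_{j, -}\) for each \(f_{j, -}\),
\item one prime \(\lambda_{j, +}\) for each \(f_{j, +}\),
\item \(e\) further primes \(\xi_{j, 1}, \dots, \xi_{j, e}\) for each \(f_{j, -}\).
\end{itemize}
Each new prime must avoid the finite set of primes dividing \(2nmc_{1}c_{2}\) and all primes already chosen. This is possible because infinitely many candidates exist at each step. Condition (iv) then holds by construction.

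Next, each assigned prime \(\ell\) (one of the \(\lambda\) or \(\xi\)) comes with its polynomial \(f = f_{j, \pm}\) and a root \(r \in \ZZ\) of \(f\) modulo \(\ell\). Since \(\ell \nmid c_{j}\), we have \(r \not\equiv 0 \pmod{\ell}\). Since moreover \(\ell \nmid n\), it follows that
\[
f'(r) = n r^{n - 1} \not\equiv 0 \pmod{\ell},
\]
so \(r\) is a simple root. Now consider the lifts \(r + \ell t\) with \(t \in \{0, \dots, \ell - 1\}\). By Taylor expansion,
\[
f(r + \ell t) \equiv f(r) + \ell t f'(r) \pmod{\ell^{2}}.
\]
Hence exactly one residue \(t\) gives \(\ell^{2} \mid f(r + \ell t)\). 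Since \(\ell \ge 3\), I can pick some other \(t\), obtaining a residue class \(r_{\ell} \bmod \ell^{2}\) such that every \(H \equiv r_{\ell} \pmod{\ell^{2}}\) satisfies \(v_{\ell}(f(H)) = 1\). This is precisely conditions (ii) and (iii) for that prime. I expect this simple-root / Hensel step to be the only real point requiring care. It is handled by the requirement \(\ell \nmid nc_{j}\) from (iv), and this is why the primes must be chosen outside \(2nmc_{1}c_{2}\) in the first place.

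Finally, I impose the following simultaneous congruences on \(H\):
\begin{itemize}
\item \(H \equiv c_{1} \pmod{8}\);
\item \(H \equiv 1 \pmod{q}\) for every prime \(q \mid c_{1}c_{2}\);
\item \(H \equiv r_{\ell} \pmod{\ell^{2}}\) for each of the \(2e + 4\) chosen primes \(\ell\).
\end{itemize}
The moduli are pairwise coprime. Indeed, \(c_{1} = 4m + 1\) and \(c_{2} = 4m - 1\) are odd, and every chosen \(\ell\) is odd and coprime to \(c_{1}c_{2}\). So the Chinese remainder theorem gives an arithmetic progression of solutions, and I take \(H > c_{1}\) in it.

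It remains to check condition (i).
\begin{itemize}
\item \(H\) is odd, since \(c_{1}\) is odd and \(H \equiv c_{1} \pmod{8}\).
\item \(\gcd(H, c_{1}c_{2}) = 1\), since \(H \equiv 1\) modulo every prime factor of \(c_{1}c_{2}\).
\item \(H > c_{1}\) and \(H \equiv c_{1} \pmod{8}\) hold by the choice of \(H\).
\end{itemize}
Conditions (ii)--(iv) hold by the choices above, which completes the construction.
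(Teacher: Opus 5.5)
Your proposal is correct and follows the paper's proof essentially step for step. The paper likewise uses Schur's theorem to pick the \(2e + 4\) mutually distinct primes outside \(2nmc_{1}c_{2}\), and uses \(r \nmid nc_{j}\) to see that the root modulo \(r\) is simple. It then chooses a lift modulo \(r^{2}\) that is not a root, forcing valuation exactly \(1\), and finally obtains \(H\) by the Chinese remainder theorem from \(H \equiv c_{1} \pmod{8}\) and \(H \equiv 1 \pmod{\rad(c_{1}c_{2})}\), taking a representative \(H > c_{1}\).
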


\begin{proof}
For each \(j \in \{1, 2\}\), put
\[
g_{j, -}(X) = X^{n} - c_{j}, \qquad g_{j, +}(X) = X^{n} + c_{j}.
\]
By Schur's theorem (see \cite[pp.~40--41]{Schur1912}),
for each polynomial \(g_{j, \pm}(X)\), there exist infinitely many primes that divide \(g_{j, \pm}(x)\) for some integer \(x\).
Consequently, by excluding finitely many primes at each stage, for each \(j\) we can choose
\(e + 1\) primes \(\lambda_{j, -}, \xi_{j, 1}, \dots, \xi_{j, e}\) among the prime divisors of integer values of \(g_{j, -}\)
and one prime \(\lambda_{j, +}\) among the prime divisors of integer values of \(g_{j, +}\)
so that all the displayed primes are mutually distinct and none of them divides \(2nmc_{1}c_{2}\).
For each selected prime \(r\), if we write \(g\) for the corresponding polynomial,
then there exists an integer \(x_{r}\) such that
\[
g(x_{r}) \equiv 0 \pmod{r}.
\]
Here \(r \nmid c_{j}\), so that \(r \nmid x_{r}\); moreover, \(r \nmid n\), and hence
\[
g'(x_{r}) = n x_{r}^{n - 1} \not\equiv 0 \pmod{r}.
\]
By Hensel's lemma, among the residue classes modulo \(r^{2}\) lifting the residue class of \(x_{r}\) modulo \(r\), exactly one is a root of \(g\) modulo \(r^{2}\).
Choosing a lift \(y_{r}\) different from this one, we obtain
\[
v_{r}\bigl(g(y_{r})\bigr) = 1.
\]

The moduli \(8\), \(\rad(c_{1}c_{2})\), and \(r^{2}\) for all the selected primes \(r\) are pairwise coprime.
Hence, by the Chinese remainder theorem, there exists an integer \(H\) satisfying simultaneously
\[
\begin{aligned}
H &\equiv c_{1} \pmod{8}, \\
H &\equiv 1 \pmod{\rad(c_{1}c_{2})}, \\
H &\equiv y_{r} \pmod{r^{2}}
\quad\text{for every selected prime \(r\)}.
\end{aligned}
\]
Replacing \(H\) by a sufficiently large positive integer in the same residue class, we may also assume that \(H > c_{1}\); then all the conditions hold.
\end{proof}

In what follows, we fix \(H\), \(\lambda_{j, \pm}\), and \(\xi_{j, k}\) satisfying the conditions of Lemma~\ref{lem:choice-of-H}. For each \(j \in \{1, 2\}\), put
\begin{equation}\label{eq:def-abC}
a_{j} = H^{n} - c_{j}, \qquad b_{j} = H^{n} + c_{j}, \qquad C_{j} = a_{j}b_{j} = H^{2n} - c_{j}^{2}.
\end{equation}

\begin{lemma}\label{lem:properties-abC}
For each \(j \in \{1, 2\}\), the following hold.
\begin{enumerate}[label=\textrm{(\roman*)}]
\item \(C_{j} > 0\) and \(16 \mid C_{j}\).
\item \(\gcd(H, C_{j}c_{j}) = 1\).
\item \(v_{\lambda_{j, -}}(a_{j}) = v_{\lambda_{j, +}}(b_{j}) = 1\) and \(v_{\lambda_{j, -}}(b_{j}) = v_{\lambda_{j, +}}(a_{j}) = 0\).
\item \(v_{\xi_{j, k}}(a_{j}) = 1\) and \(v_{\xi_{j, k}}(b_{j}) = 0\) (\(1 \le k \le e\)).
\end{enumerate}
\end{lemma}

\begin{proof}
Since \(H > c_{1} \ge c_{j} > 0\), it follows that \(C_{j} = (H^{n} - c_{j})(H^{n} + c_{j}) > 0\). Next, since \(H \equiv c_{1} \pmod{8}\), we can write \(H = c_{1} + 8k_{0}\) with \(k_{0} \in \ZZ\), so that
\[
H^{2} = c_{1}^{2} + 16c_{1}k_{0} + 64k_{0}^{2} \equiv c_{1}^{2} \pmod{16}.
\]
Furthermore, since \(c_{1}\) is odd, \(c_{1}^{2} \equiv 1 \pmod{8}\), and therefore \(c_{1}^{4} \equiv 1 \pmod{16}\).
Since \(n\) is odd, we have
\[
H^{2n} \equiv (c_{1}^{2})^{n} = c_{1}^{2}\,(c_{1}^{4})^{(n - 1)/2} \equiv c_{1}^{2} \pmod{16}.
\]
By~\eqref{eq:c-square-difference}, \(c_{1}^{2} \equiv c_{2}^{2} \pmod{16}\), and hence \(16 \mid C_{j}\).

By Lemma~\ref{lem:choice-of-H}~(i), \(\gcd(H, c_{j}) = 1\), and
\[
\gcd(H, C_{j}) = \gcd(H, H^{2n} - c_{j}^{2}) = \gcd(H, c_{j}^{2}) = 1;
\]
hence (ii) holds.

From Lemma~\ref{lem:choice-of-H}~(ii), we obtain
\[
v_{\lambda_{j, -}}(a_{j}) = v_{\lambda_{j, +}}(b_{j}) = 1.
\]
Furthermore, \(b_{j} - a_{j} = 2c_{j}\), and \(\lambda_{j, \pm} \nmid 2c_{j}\) by Lemma~\ref{lem:choice-of-H}~(iv); hence neither \(\lambda_{j, -}\) nor \(\lambda_{j, +}\) divides both \(a_{j}\) and \(b_{j}\). It follows that
\[
v_{\lambda_{j, -}}(b_{j}) = v_{\lambda_{j, +}}(a_{j}) = 0,
\]
and (iii) holds.

Similarly, \(v_{\xi_{j, k}}(a_{j}) = 1\) by Lemma~\ref{lem:choice-of-H}~(iii).
Since \(\xi_{j, k} \nmid 2c_{j}\) by Lemma~\ref{lem:choice-of-H}~(iv),
the prime \(\xi_{j, k}\) cannot divide both \(a_{j}\) and \(b_{j}\), and
\[
v_{\xi_{j, k}}(b_{j}) = 0.
\]
Hence (iv) holds.
\end{proof}

For each \(j \in \{1, 2\}\), put
\begin{equation}\label{eq:def-genus-primes}
\mathcal{R}_{j}
= \{\lambda_{j, -}, \lambda_{j, +}\}
\cup \{\xi_{j, 1}, \dots, \xi_{j, e}\}.
\end{equation}
By Lemmas~\ref{lem:choice-of-H} and~\ref{lem:properties-abC},
the set \(\mathcal{R}_{j}\) consists of \(e + 2\) distinct odd primes,
and \(v_{r}(C_{j}) = 1\) for all \(r \in \mathcal{R}_{j}\).

For each \(i \in \{1, 2, \dots, \omega(n)\}\), let \(\zeta_{p_{i}}\) be a primitive \(p_{i}\)-th root of unity, and put \(K_{i} = \QQ(\zeta_{p_{i}})\). Furthermore, for each \(j \in \{1, 2\}\), put
\begin{equation}\label{eq:def-AE}
E_{j} = \frac{a_{j}}{b_{j}} = \frac{H^{n} - c_{j}}{H^{n} + c_{j}}, \qquad
A_{j} = \frac{4c_{j}^{4}}{C_{j}}.
\end{equation}

We use the notion of linear independence over \(\FF_{p}\) in \(K^{\times}/K^{\times p}\) defined in Subsection~\ref{subsec:kummer-theory}.

\begin{lemma}\label{lem:AE-independent}
For any \(i \in \{1, 2, \dots, \omega(n)\}\) and \(j \in \{1, 2\}\), the classes of \(A_{j}\) and \(E_{j}\) are linearly independent over \(\FF_{p_{i}}\) in \(K_{i}^{\times}/K_{i}^{\times p_{i}}\).
\end{lemma}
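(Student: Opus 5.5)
The plan is to reduce the statement to a computation of rational \(p_{i}\)-adic valuations of \(A_{j}\) and \(E_{j}\) at the two primes \(\lambda_{j, -}\) and \(\lambda_{j, +}\). Fix \(i\) and \(j\), write \(p = p_{i}\) and \(K = K_{i} = \QQ(\zeta_{p})\), and suppose that
\[
A_{j}^{u}E_{j}^{v} \in K^{\times p}
\]
for some integers \(u, v\). We must show that \(u \equiv v \equiv 0 \pmod{p}\).

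The first step descends from \(K\) to \(\QQ\). Since \([K:\QQ] = p - 1\) is coprime to \(p\), Lemma~\ref{lem:coprime-degree-nth-powers} gives \(K^{\times p} \cap \QQ^{\times} = \QQ^{\times p}\). Because \(A_{j}^{u}E_{j}^{v} \in \QQ^{\times}\), we get \(A_{j}^{u}E_{j}^{v} \in \QQ^{\times p}\). Consequently every rational \(r\)-adic valuation of \(A_{j}^{u}E_{j}^{v}\) is divisible by \(p\).

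Next we compute the valuations at \(\lambda_{-} = \lambda_{j, -}\) and \(\lambda_{+} = \lambda_{j, +}\). By Lemma~\ref{lem:choice-of-H}~(iv), neither prime divides \(2c_{j}\), so neither divides \(4c_{j}^{4}\). Together with Lemma~\ref{lem:properties-abC}~(iii), this gives
\[
v_{\lambda_{-}}(A_{j}) = -v_{\lambda_{-}}(a_{j}b_{j}) = -1, \qquad v_{\lambda_{+}}(A_{j}) = -1,
\]
\[
v_{\lambda_{-}}(E_{j}) = v_{\lambda_{-}}(a_{j}) - v_{\lambda_{-}}(b_{j}) = 1, \qquad v_{\lambda_{+}}(E_{j}) = -1.
\]
Hence the conditions become
\[
-u + v \equiv 0 \pmod{p}, \qquad -u - v \equiv 0 \pmod{p}.
\]
Adding and subtracting these congruences gives \(2u \equiv 2v \equiv 0 \pmod{p}\). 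Since \(p\) is odd, it follows that \(u \equiv v \equiv 0 \pmod{p}\), which is the required linear independence.

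There is no serious obstacle in this argument. The only point needing care is the descent step, which relies on the fact that the \(p\)-th powers of \(\QQ(\zeta_{p})\) meet \(\QQ^{\times}\) exactly in \(\QQ^{\times p}\). Lemma~\ref{lem:coprime-degree-nth-powers} provides exactly this, since \(\gcd(p - 1, p) = 1\). The choice of \(\lambda_{j, \pm}\) in Lemma~\ref{lem:choice-of-H} was designed so that the \(2 \times 2\) valuation matrix
\[
\begin{pmatrix} -1 & 1 \\ -1 & -1 \end{pmatrix}
\]
has determinant \(2\), which is invertible modulo every odd prime.
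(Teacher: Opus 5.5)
Your proof is correct and is essentially the paper's argument: the same valuation computation at \(\lambda_{j,-}\) and \(\lambda_{j,+}\) forces \(-u+v \equiv -u-v \equiv 0 \pmod{p_i}\), combined with Lemma~\ref{lem:coprime-degree-nth-powers} applied to \(K_i/\QQ\). The only difference is order: you descend to \(\QQ\) first and then compute valuations, whereas the paper first proves independence in \(\QQ^{\times}/\QQ^{\times p_i}\) and then transfers it to \(K_i\).
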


\begin{proof}
First, let \(u, v \in \ZZ\), and suppose that
\[
A_{j}^{u}E_{j}^{v} \in \QQ^{\times p_{i}}.
\]
By Lemma~\ref{lem:choice-of-H}~(iv), \(v_{\lambda_{j, \pm}}(4c_{j}^{4}) = 0\), and hence, together with Lemma~\ref{lem:properties-abC}~(iii),
\[
\begin{aligned}
v_{\lambda_{j, -}}(A_{j}) &= -1, &
v_{\lambda_{j, -}}(E_{j}) &= 1, \\
v_{\lambda_{j, +}}(A_{j}) &= -1, &
v_{\lambda_{j, +}}(E_{j}) &= -1.
\end{aligned}
\]
Consequently,
\[
-u + v \equiv 0 \pmod{p_{i}}, \qquad -u - v \equiv 0 \pmod{p_{i}}.
\]
Since \(n\) is odd, \(p_{i}\) is an odd prime, so we obtain \(u \equiv v \equiv 0 \pmod{p_{i}}\). Hence the classes of \(A_{j}\) and \(E_{j}\) are linearly independent in \(\QQ^{\times}/\QQ^{\times p_{i}}\).

In addition, since \([K_{i}:\QQ] = p_{i} - 1\), Lemma~\ref{lem:coprime-degree-nth-powers} gives
\[
K_{i}^{\times p_{i}} \cap \QQ^{\times} = \QQ^{\times p_{i}}.
\]
Thus the classes are linearly independent in \(K_{i}^{\times}/K_{i}^{\times p_{i}}\) as well.
\end{proof}

\begin{lemma}\label{lem:choice-of-auxiliary-primes}
For each \(i \in \{1, 2, \dots, \omega(n)\}\) and \(j \in \{1, 2\}\), we can choose rational primes \(l_{i, j}\) and \(q_{i, j}\) satisfying the following conditions.
\begin{enumerate}[label=\textrm{(\roman*)}]
\item \(l_{i, j} \equiv 1 \pmod{p_{i}}\), \(A_{j} \notin \FF_{l_{i, j}}^{\times p_{i}}\), and \(E_{j} \in \FF_{l_{i, j}}^{\times p_{i}}\).
\item \(q_{i, j} \equiv 1 \pmod{p_{i}}\) and \(E_{j} \notin \FF_{q_{i, j}}^{\times p_{i}}\).
\item All the primes \(l_{i, j}\) and \(q_{i, j}\) are mutually distinct, and none of them divides \(2nc_{1}c_{2}C_{1}C_{2}\).
\end{enumerate}
\end{lemma}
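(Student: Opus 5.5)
For fixed \(i\) and \(j\), we work in the Kummer extension
\[
L_{i, j} = K_{i}\bigl(\sqrt[p_{i}]{A_{j}}, \sqrt[p_{i}]{E_{j}}\bigr)
\]
of \(K_{i} = \QQ(\zeta_{p_{i}})\). The primes will come from Corollary~\ref{cor:chebotarev-rational} applied to the Galois extension \(L_{i, j}/\QQ\). That extension is Galois by Corollary~\ref{cor:kummer-independent}~(iii), since \(A_{j}, E_{j} \in \QQ\).

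By Lemma~\ref{lem:AE-independent} and Corollary~\ref{cor:kummer-independent}~(i), we have \(\Gal(L_{i, j}/K_{i}) \cong \FF_{p_{i}}^{\oplus 2}\) via \(u \mapsto \tau_{u}\), where \(\tau_{u}\) multiplies \(\sqrt[p_{i}]{A_{j}}\) by \(\zeta_{p_{i}}^{u_{1}}\) and \(\sqrt[p_{i}]{E_{j}}\) by \(\zeta_{p_{i}}^{u_{2}}\).

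For \(l_{i, j}\) we take \(\mathcal{C}\) to be the conjugacy class of \(\tau_{(1, 0)}\) in \(\Gal(L_{i, j}/\QQ)\). By Corollary~\ref{cor:kummer-independent}~(iii), this class is \(\{\tau_{(c, 0)} : c \in \FF_{p_{i}}^{\times}\}\). Every element of it fixes \(K_{i}\), so the second part of Corollary~\ref{cor:chebotarev-rational} shows that every prime with Frobenius class \(\mathcal{C}\) splits completely in \(K_{i}\), i.e.\ \(l \equiv 1 \pmod{p_{i}}\).

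We now translate the Frobenius condition into residuosity. Fix such an \(l\) outside a finite exceptional set \(S\), and let \(\mathfrak{L}\) be a prime of \(L_{i, j}\) above it with \(\mathfrak{l}_{K} = \mathfrak{L} \cap \mathcal{O}_{K_{i}}\). Since \(l\) splits completely in \(K_{i}\), the Frobenius of \(\mathfrak{L}\) over \(\QQ\) lies in \(\Gal(L_{i, j}/K_{i})\) and equals \(\Frob{L_{i, j}/K_{i}}{\mathfrak{l}_{K}}\). This element is \(\tau_{(c, 0)}\) with \(c \neq 0\).

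Restricting to the subextensions \(K_{i}(\sqrt[p_{i}]{E_{j}})\) and \(K_{i}(\sqrt[p_{i}]{A_{j}})\) and using Corollary~\ref{cor:kummer-independent}~(ii), the Frobenius is trivial on the first and nontrivial on the second. Proposition~\ref{prop:power-residue-criterion} then converts this into the residuosity conditions. We apply it with \(F = \QQ\) and \(\gamma\) equal to \(E_{j}\) and \(A_{j}\), after multiplying by a suitable \(p_{i}\)-th power to clear denominators so that \(\gamma \in \ZZ\); this changes neither the field nor the residuosity. The result is \(E_{j} \in \FF_{l}^{\times p_{i}}\) and \(A_{j} \notin \FF_{l}^{\times p_{i}}\).

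For \(q_{i, j}\) we argue the same way with the class of \(\tau_{(0, 1)}\). This gives \(q \equiv 1 \pmod{p_{i}}\) and \(E_{j} \notin \FF_{q}^{\times p_{i}}\). Alternatively, one could use only the smaller field \(K_{i}(\sqrt[p_{i}]{E_{j}})\).

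The exceptional set \(S\) consists of the primes dividing \(2nc_{1}c_{2}C_{1}C_{2}\), which includes all primes dividing numerators or denominators of \(A_{j}\) and \(E_{j}\) and hence all ramified primes. For mutual distinctness we choose the \(4\omega(n)\) primes one at a time, adding each chosen prime to \(S\) before the next choice. This is possible because Corollary~\ref{cor:chebotarev-rational} provides infinitely many primes outside any finite set.

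The main obstacle is the identification of the Frobenius over \(\QQ\) with the relative Frobenius over \(K_{i}\). Because \(\Gal(L_{i, j}/\QQ)\) is nonabelian, the conjugacy class must be computed, and the argument must check that it lies entirely in \(\Gal(L_{i, j}/K_{i})\) with the correct coordinate vanishing. Corollary~\ref{cor:kummer-independent}~(iii) handles this: conjugation only scales the vector \(u\) by \(\chi(\sigma)\), which preserves the property of having first coordinate nonzero and second coordinate zero.
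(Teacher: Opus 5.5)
Your proposal is correct and is essentially the paper's own proof. Both apply Chebotarev to \(L = K_{i}(\sqrt[p_{i}]{A_{j}}, \sqrt[p_{i}]{E_{j}})\) with the conjugacy classes \(\{\tau_{(c, 0)}\}\) and \(\{\tau_{(0, c)}\}\) (\(c \in \FF_{p_{i}}^{\times}\)), identify the Frobenius over \(\QQ\) with the relative one over \(K_{i}\) via complete splitting in \(K_{i}\), and apply Proposition~\ref{prop:power-residue-criterion} to the integral representatives \(A_{j}C_{j}^{p_{i}}\) and \(E_{j}b_{j}^{p_{i}}\). The only difference is cosmetic: you exclude the primes dividing \(2nc_{1}c_{2}C_{1}C_{2}\) from the outset, whereas the paper first excludes those dividing \(2p_{i}c_{j}C_{j}\) and the rest at the final selection step.
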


\begin{proof}
Fix \(i\) and \(j\), and write \(p = p_{i}\) and \(K = K_{i}\). Put
\[
\widehat{A}_{j} = A_{j}C_{j}^{p} = 4c_{j}^{4}C_{j}^{p - 1}, \qquad
\widehat{E}_{j} = E_{j}b_{j}^{p} = a_{j}b_{j}^{p - 1}.
\]
These are nonzero integers, and
\(\widehat{A}_{j}/A_{j} = C_{j}^{p}\),
\(\widehat{E}_{j}/E_{j} = b_{j}^{p}\). Hence, by Proposition~\ref{prop:kummer-basic}~(i),
\[
K\bigl(\sqrt[p]{A_{j}}\bigr) = K\bigl(\sqrt[p]{\widehat{A}_{j}}\bigr), \qquad
K\bigl(\sqrt[p]{E_{j}}\bigr) = K\bigl(\sqrt[p]{\widehat{E}_{j}}\bigr).
\]
In addition, if a prime \(r\) does not divide \(2c_{j}C_{j}\),
then the \(p\)-th power residuosity of \(A_{j}\) modulo \(r\) coincides with that of \(\widehat{A}_{j}\).
Similarly, if \(r \nmid C_{j}\), then the \(p\)-th power residuosity of \(E_{j}\) modulo \(r\) coincides with that of \(\widehat{E}_{j}\).
Furthermore, since \(A_{j}, E_{j} \notin K^{\times p}\) by Lemma~\ref{lem:AE-independent},
neither \(\widehat{A}_{j}\) nor \(\widehat{E}_{j}\) lies in \(K^{\times p}\).

Let \(\alpha = \sqrt[p]{A_{j}}\) and \(\beta = \sqrt[p]{E_{j}}\), and put
\[
L = K(\alpha, \beta).
\]
By Lemma~\ref{lem:AE-independent} and Corollary~\ref{cor:kummer-independent}~(i), we have
\[
\Gal(L/K) \cong \FF_{p} \oplus \FF_{p}.
\]
Put \(V = \Gal(L/K)\). More precisely, for each \((u, v) \in \FF_{p} \oplus \FF_{p}\), there exists a unique \(\tau_{u, v} \in V\) such that
\[
\tau_{u, v}(\alpha) = \zeta_{p}^{u}\alpha, \qquad
\tau_{u, v}(\beta) = \zeta_{p}^{v}\beta,
\]
and \((u, v) \mapsto \tau_{u, v}\) gives an isomorphism \(\FF_{p} \oplus \FF_{p} \to V\).
Since \(A_{j}, E_{j} \in \QQ^{\times}\) and \(K/\QQ\) is a Galois extension, \(L/\QQ\) is a Galois extension by Corollary~\ref{cor:kummer-independent}~(iii).
Indeed, \(L\) is the splitting field over \(\QQ\) of
\[
(X^{p} - A_{j})(X^{p} - E_{j}).
\]

Put \(G = \Gal(L/\QQ)\). For \(\sigma \in G\), write
\[
\sigma(\zeta_{p}) = \zeta_{p}^{\chi(\sigma)}
\qquad\bigl(\chi(\sigma) \in \FF_{p}^{\times}\bigr).
\]
By Corollary~\ref{cor:kummer-independent}~(iii),
\[
\sigma\tau_{u, v}\sigma^{-1}
= \tau_{\chi(\sigma)u, \chi(\sigma)v}.
\]
The restriction map
\[
G \longrightarrow \Gal(K/\QQ)
\]
is surjective, and \(\Gal(K/\QQ) \cong \FF_{p}^{\times}\). Consequently, each of
\[
\begin{aligned}
\mathcal{C}_{A} &= \{\tau_{u, 0} : u \in \FF_{p}^{\times}\}, \\
\mathcal{C}_{E} &= \{\tau_{0, v} : v \in \FF_{p}^{\times}\}
\end{aligned}
\]
is a single conjugacy class of \(G\).

We apply Chebotarev's density theorem (Corollary~\ref{cor:chebotarev-rational}) to \(\mathcal{C}_{A}\).
There exist infinitely many rational primes \(l\), unramified in \(L/\QQ\), whose Frobenius class is \(\mathcal{C}_{A}\).
Since only finitely many of them divide \(2pc_{j}C_{j}\), we may choose one that moreover satisfies
\[
l \nmid 2pc_{j}C_{j}.
\]
Take such an \(l\) and a prime ideal \(\mathfrak{L}\) of \(L\) lying above \(l\), and write
\[
\Frob{L/\QQ}{\mathfrak{L}} = \tau_{u, 0}
\qquad (u \ne 0).
\]
Since \(\tau_{u, 0} \in V = \Gal(L/K)\), its restriction to \(K\) is the identity map.
Hence \(l\) splits completely in \(K/\QQ\), and
\[
l \equiv 1 \pmod{p}.
\]
Put \(\mathfrak{l} = \mathfrak{L} \cap \mathcal{O}_{K}\).
Since \(l\) splits completely in \(K/\QQ\), the residue degree of \(\mathfrak{l}\) over \(l\) is \(1\), and therefore
\[
\Frob{L/K}{\mathfrak{L}} = \Frob{L/\QQ}{\mathfrak{L}}^{f(\mathfrak{l}/(l))} = \Frob{L/\QQ}{\mathfrak{L}} = \tau_{u, 0}.
\]
Restricting \(\tau_{u, 0}\) to the subextensions \(K(\alpha)/K\) and \(K(\beta)/K\), and using \(u \ne 0\) and Corollary~\ref{cor:kummer-independent}~(ii), we obtain
\[
\begin{aligned}
\Frob{K(\alpha)/K}{\mathfrak{l}} &= \tau_{u, 0}|_{K(\alpha)} \ne \id_{K(\alpha)}, \\
\Frob{K(\beta)/K}{\mathfrak{l}} &= \tau_{u, 0}|_{K(\beta)} = \id_{K(\beta)}.
\end{aligned}
\]
By the choice of \(l\),
\[
l \nmid p, \qquad
l \nmid \widehat{A}_{j}, \qquad
l \nmid \widehat{E}_{j}.
\]
In addition, the residue field of \(\mathfrak{l}\) is \(\FF_{l}\). Applying Proposition~\ref{prop:power-residue-criterion} twice, first with \(F = \QQ\), \(\mathfrak{q} = (l)\), and \(\gamma = \widehat{A}_{j}\), and then with \(\gamma = \widehat{E}_{j}\), we obtain
\[
\widehat{A}_{j} \notin \FF_{l}^{\times p}, \qquad
\widehat{E}_{j} \in \FF_{l}^{\times p}.
\]
Since \(A_{j}\) and \(\widehat{A}_{j}\) have the same \(p\)-th power residuosity modulo \(l\), and likewise for \(E_{j}\) and \(\widehat{E}_{j}\), we conclude that
\[
A_{j} \notin \FF_{l}^{\times p}, \qquad
E_{j} \in \FF_{l}^{\times p}.
\]

Similarly, by applying Corollary~\ref{cor:chebotarev-rational} to \(\mathcal{C}_{E}\) and excluding the finitely many primes dividing \(2pc_{j}C_{j}\), we obtain infinitely many rational primes \(q\) satisfying
\[
q \equiv 1 \pmod{p}, \qquad
q \nmid 2pc_{j}C_{j}, \qquad
E_{j} \notin \FF_{q}^{\times p}.
\]

Since each of the sets above is infinite, we may choose \(l_{i, j}\) and \(q_{i, j}\) successively for the pairs \((i, j)\), excluding at each stage the finitely many primes already chosen and the prime divisors of \(2nc_{1}c_{2}C_{1}C_{2}\).
In this way, (i)--(iii) hold simultaneously.
\end{proof}

In what follows, we refer to the rational primes \(l_{i, j}\) and \(q_{i, j}\) satisfying conditions (i)--(iii) of Lemma~\ref{lem:choice-of-auxiliary-primes} as auxiliary primes.

We fix all the auxiliary primes \(l_{i, j}\) and \(q_{i, j}\) obtained in Lemma~\ref{lem:choice-of-auxiliary-primes}.
Put
\[
M_{0} = \lcm(C_{1}, C_{2}, c_{1}, c_{2}),
\]
and consider the following system of congruence conditions.
\begin{equation}\label{eq:congruence-system}
\left\{
\begin{aligned}
t &\equiv H \pmod{M_{0}}, \\
t &\equiv 0 \pmod{l_{i, j}}\quad (1 \le i \le \omega(n), j = 1, 2), \\
t &\equiv 0 \pmod{q_{i, j}}\quad (1 \le i \le \omega(n), j = 1, 2).
\end{aligned}
\right.
\end{equation}

\begin{lemma}\label{lem:congruence-system-solvable}
There exist infinitely many positive integers \(t\) satisfying the system of congruence conditions~\eqref{eq:congruence-system}.
\end{lemma}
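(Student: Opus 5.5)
This is a direct application of the Chinese remainder theorem. The system~\eqref{eq:congruence-system} has moduli \(M_{0}\), the primes \(l_{i, j}\), and the primes \(q_{i, j}\) (\(1 \le i \le \omega(n)\), \(j = 1, 2\)). So the plan is to check that these moduli are pairwise coprime, and then take the full residue class of solutions modulo their product.

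First, the auxiliary primes \(l_{i, j}\) and \(q_{i, j}\) are mutually distinct by Lemma~\ref{lem:choice-of-auxiliary-primes}~(iii), so they are pairwise coprime. Second, each of them is coprime to \(M_{0}\). Indeed, by the same condition (iii), none of them divides \(2nc_{1}c_{2}C_{1}C_{2}\). Every prime divisor of \(M_{0} = \lcm(C_{1}, C_{2}, c_{1}, c_{2})\) divides \(c_{1}c_{2}C_{1}C_{2}\), because \(C_{1}, C_{2}, c_{1}, c_{2}\) are nonzero (\(C_{j} > 0\) by Lemma~\ref{lem:properties-abC}~(i), and \(c_{j} = 4m \pm 1 \ge 3\)). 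Hence no auxiliary prime divides \(M_{0}\).

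The Chinese remainder theorem then gives a residue class \(t_{0}\) modulo
\[
M = M_{0}\prod_{i, j} l_{i, j}\prod_{i, j} q_{i, j}
\]
whose elements are exactly the solutions of~\eqref{eq:congruence-system}. Since \(M \ge 1\), the integers \(t_{0} + kM\) for all sufficiently large \(k\) are positive, distinct, and all satisfy the system. This gives infinitely many positive solutions.

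There is no real obstacle. The only point needing care is the coprimality of \(M_{0}\) with the auxiliary primes, and this is exactly what the exclusion of the prime divisors of \(c_{1}c_{2}C_{1}C_{2}\) in Lemma~\ref{lem:choice-of-auxiliary-primes}~(iii) was designed to ensure.
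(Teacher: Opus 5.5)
Your proposal is correct and follows the paper's own proof: both verify, via Lemma~\ref{lem:choice-of-auxiliary-primes}~(iii), that the auxiliary primes \(l_{i, j}\), \(q_{i, j}\) are mutually distinct and coprime to \(M_{0}\), and then invoke the Chinese remainder theorem to get a single residue class containing infinitely many positive integers. Your extra remark that every prime divisor of \(M_{0}\) divides \(c_{1}c_{2}C_{1}C_{2}\) just makes explicit a step the paper leaves implicit.
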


\begin{proof}
By Lemma~\ref{lem:choice-of-auxiliary-primes}, \(M_{0}\) is coprime to all the primes \(l_{i, j}\), \(q_{i, j}\), and the primes \(l_{i, j}\), \(q_{i, j}\) are mutually distinct. Hence the Chinese remainder theorem yields a single residue class, which contains infinitely many positive representatives.
\end{proof}

Let \(\mathcal{T}\) be the set of all positive integers \(t\) satisfying the system of congruence conditions~\eqref{eq:congruence-system} and \(t \ge 2H\).
Note that if \(t \ge 2H\), then \(t^{n} - H^{n} \ge (2^{n} - 1)H^{n} > H^{n} > 0\), so that \((t^{n} - H^{n})^{2} > H^{2n} > C_{j}\).
For \(t \in \mathcal{T}\) and \(j \in \{1, 2\}\), put
\begin{equation}\label{eq:def-W-k}
W = W(t) = t^{n} - H^{n}, \qquad k_{j} = k_{j}(t) = \frac{W}{C_{j}}.
\end{equation}
Furthermore, put
\begin{equation}\label{eq:def-Delta-D}
\Delta_{j}(t) = W^{2} - C_{j} = t^{2n} - 2H^{n}t^{n} + c_{j}^{2}, \qquad
D_{j}(t) = \frac{\Delta_{j}(t)}{16}.
\end{equation}

\begin{lemma}\label{lem:properties-Delta-D}
Let \(t \in \mathcal{T}\). For each \(j \in \{1, 2\}\), the quantities \(k_{j}(t)\) and \(D_{j}(t)\) are integers. Moreover, we have
\begin{equation}\label{eq:D-shift}
D_{1}(t) = D_{2}(t) + m.
\end{equation}
Furthermore, \(W > 0\), \(k_{j}(t) > 0\), \(\Delta_{1}(t) > 0\), and \(\Delta_{2}(t) > 0\).
For any \(r \in \mathcal{R}_{j}\), we also have
\[
v_{r}\bigl(\Delta_{j}(t)\bigr)
= v_{r}\bigl(D_{j}(t)\bigr) = 1.
\]
In particular, \(\Delta_{j}(t)\) and \(D_{j}(t)\) are not squares.
\end{lemma}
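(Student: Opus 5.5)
Everything follows from the congruence $t \equiv H \pmod{M_{0}}$ in \eqref{eq:congruence-system}, together with the definitions \eqref{eq:def-W-k}, \eqref{eq:def-Delta-D} and the facts in Lemma~\ref{lem:properties-abC}. The plan is to handle integrality first, then the shift \eqref{eq:D-shift}, then positivity, and finally the valuations.

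\emph{Integrality.} Since $C_{j} \mid M_{0}$ and $t \equiv H \pmod{M_{0}}$, we get $t^{n} \equiv H^{n} \pmod{C_{j}}$. Hence $C_{j} \mid W$, so $k_{j}(t) \in \ZZ$. Next, $\Delta_{j}(t) = W^{2} - C_{j} = C_{j}(C_{j}k_{j}^{2} - 1)$, and $16 \mid C_{j}$ by Lemma~\ref{lem:properties-abC}~(i). Therefore $16 \mid \Delta_{j}(t)$ and $D_{j}(t) \in \ZZ$.

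\emph{Shift.} We have $\Delta_{1}(t) - \Delta_{2}(t) = c_{1}^{2} - c_{2}^{2} = 16m$ by \eqref{eq:c-square-difference}. Dividing by $16$ gives \eqref{eq:D-shift}.

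\emph{Positivity.} Since $t \ge 2H$, we have $W \ge (2^{n}-1)H^{n} > 0$, and $C_{j} > 0$ gives $k_{j} > 0$. The remark before \eqref{eq:def-W-k} gives $W^{2} > H^{2n} > C_{j}$, so $\Delta_{j}(t) > 0$.

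\emph{Valuations.} Take $r \in \mathcal{R}_{j}$. Then $v_{r}(C_{j}) = 1$, as noted after \eqref{eq:def-genus-primes}, and $r$ is odd, so $v_{r}(D_{j}) = v_{r}(\Delta_{j})$. From the factorization above,
\[
v_{r}(\Delta_{j}(t)) = 1 + v_{r}\bigl(C_{j}k_{j}^{2} - 1\bigr).
\]
Since $r \mid C_{j}$, we have $C_{j}k_{j}^{2} - 1 \equiv -1 \pmod{r}$, so this second valuation is $0$ and $v_{r}(\Delta_{j}(t)) = 1$.

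\emph{Non-squareness.} A valuation equal to $1$ at a prime rules out being a square. The main point of care is that $k_{j}$ really is an integer, so that $W^{2} - C_{j} = C_{j}(C_{j}k_{j}^{2}-1)$ is a genuine integer factorization. This is exactly what the modulus $M_{0}$ in \eqref{eq:congruence-system} guarantees. No other obstacle is expected.
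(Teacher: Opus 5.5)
Your proposal is correct and follows essentially the same route as the paper: you obtain integrality of $k_{j}(t)$ from $t \equiv H \pmod{M_{0}}$, derive the shift from $c_{1}^{2} - c_{2}^{2} = 16m$, and get $v_{r}(\Delta_{j}(t)) = 1$ from the factorization $\Delta_{j}(t) = C_{j}\bigl(C_{j}k_{j}(t)^{2} - 1\bigr)$ with $v_{r}(C_{j}) = 1$. The only differences are cosmetic: you get $16 \mid \Delta_{j}(t)$ from this factorization rather than from $16 \mid C_{j} \mid W$, and you compute $\Delta_{1}(t) - \Delta_{2}(t)$ directly instead of $(C_{2} - C_{1})/16$.
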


\begin{proof}
Since \(t \equiv H \pmod{C_{j}}\), the integer \(C_{j}\) divides \(t^{n} - H^{n} = W\), and thus \(k_{j}(t) \in \ZZ\). By Lemma~\ref{lem:properties-abC}~(i), \(16 \mid C_{j}\), and since \(C_{j} \mid W\), we get \(16 \mid W^{2} - C_{j} = \Delta_{j}(t)\). Hence \(D_{j}(t) \in \ZZ\).

By~\eqref{eq:c-square-difference} and~\eqref{eq:def-abC},
\[
\begin{aligned}
D_{1}(t) - D_{2}(t)
&= \frac{C_{2} - C_{1}}{16}
= \frac{c_{1}^{2} - c_{2}^{2}}{16}
= m.
\end{aligned}
\]
Next, since \(t \ge 2H\), we have \(W = t^{n} - H^{n} > 0\) and \(k_{j}(t) = W/C_{j} > 0\). The inequality \(W^{2} > H^{2n} > C_{j}\) then gives \(\Delta_{j}(t) = W^{2} - C_{j} > 0\).

Fix \(j \in \{1, 2\}\) and \(r \in \mathcal{R}_{j}\).
By~\eqref{eq:def-genus-primes} and Lemma~\ref{lem:properties-abC},
\(v_{r}(C_{j}) = 1\).
From \(W = C_{j}k_{j}(t)\) and \(k_{j}(t) \in \ZZ\), we obtain
\[
\Delta_{j}(t)
= C_{j}\bigl(C_{j}k_{j}(t)^{2} - 1\bigr)
\]
and
\[
C_{j}k_{j}(t)^{2} - 1 \equiv -1 \pmod{r}.
\]
It follows that \(v_{r}(\Delta_{j}(t)) = 1\).
Since \(r\) is an odd prime and \(D_{j}(t) = \Delta_{j}(t)/16\),
we also have \(v_{r}(D_{j}(t)) = 1\).
Since \(\mathcal{R}_{j}\) is nonempty (for instance, \(\lambda_{j, -} \in \mathcal{R}_{j}\)),
neither \(\Delta_{j}(t)\) nor \(D_{j}(t)\) is a square.
\end{proof}

For \(t \in \mathcal{T}\) and \(j \in \{1, 2\}\), put
\begin{equation}\label{eq:def-F}
F_{j}(t) = \QQ\left(\sqrt{D_{j}(t)}\right)
= \QQ\left(\sqrt{\Delta_{j}(t)}\right).
\end{equation}
By Lemma~\ref{lem:properties-Delta-D}, \(D_{j}(t)\) is a positive non-square integer, and therefore \(F_{j}(t)\) is a real quadratic field.

\begin{lemma}\label{lem:genus-primes-ramify}
Let \(t \in \mathcal{T}\) and \(j \in \{1, 2\}\).
Then each prime in \(\mathcal{R}_{j}\) ramifies in \(F_{j}(t)/\QQ\).
\end{lemma}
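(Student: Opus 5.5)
The plan is to use the fact from Lemma~\ref{lem:properties-Delta-D} that \(v_{r}(D_{j}(t)) = 1\) for every \(r \in \mathcal{R}_{j}\). Since \(r\) divides \(D_{j}(t)\) exactly once, it must divide the square-free part of \(D_{j}(t)\). That square-free part in turn divides the discriminant of \(F_{j}(t)\), and a rational prime ramifies in a quadratic field exactly when it divides the discriminant.

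The steps are as follows.

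\begin{enumerate}[label=\textrm{(\arabic*)}]
\item Fix \(t \in \mathcal{T}\), \(j \in \{1, 2\}\) and \(r \in \mathcal{R}_{j}\). By Lemma~\ref{lem:properties-Delta-D}, \(D_{j}(t)\) is a positive non-square integer with \(v_{r}(D_{j}(t)) = 1\).
\item Write \(D_{j}(t) = du^{2}\) with \(d\) square-free and \(u > 0\), as in Section~\ref{sec:notation}. Then \(v_{r}(d) + 2v_{r}(u) = 1\). Since \(v_{r}(d) \in \{0, 1\}\), this forces \(v_{r}(d) = 1\) and \(v_{r}(u) = 0\).
\item The field \(F_{j}(t)\) equals \(\QQ(\sqrt{d})\), and its discriminant is \(d\) or \(4d\). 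Since \(r\) is odd, \(r \mid d_{F_{j}(t)}\) in either case.
\item Conclude that \(r\) ramifies in \(F_{j}(t)/\QQ\), because the primes ramifying in a quadratic field are exactly those dividing its discriminant (Dedekind's discriminant theorem).
\end{enumerate}

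There is no real obstacle here. The only point needing care is the passage from \(D_{j}(t)\) to its square-free part in step (2). Exact divisibility \(v_{r} = 1\) guarantees that \(r\) survives in \(d\), and the oddness of \(r\) (built into Lemma~\ref{lem:choice-of-H}) means the possible factor \(4\) in the discriminant is irrelevant.

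This conclusion will later feed into Lemma~\ref{lem:genus-two-part}: it gives \(\omega(d_{F_{j}(t)}) \ge e + 2\), hence \(2^{e} \mid h(F_{j}(t))\).
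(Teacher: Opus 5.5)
Your proof is correct and follows essentially the same route as the paper's proof. You use \(v_{r}(D_{j}(t)) = 1\) from Lemma~\ref{lem:properties-Delta-D} to put \(r\) in the square-free part, and then use that \(r\) is odd to get \(r \mid d_{F_{j}(t)}\), hence ramification; your step~(2) just spells out the valuation bookkeeping that the paper leaves implicit.
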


\begin{proof}
By Lemma~\ref{lem:properties-Delta-D},
\(v_{r}(D_{j}(t)) = 1\) for any \(r \in \mathcal{R}_{j}\).
Hence \(r\) divides the square-free part of \(D_{j}(t)\).
Since \(r\) is an odd prime, \(r\) divides the discriminant of \(F_{j}(t)\), so that \(r\) ramifies in \(F_{j}(t)/\QQ\).
\end{proof}

We denote by \(\varepsilon_{j}(t)\) the fundamental unit of \(F_{j}(t)\). For each \(j \in \{1, 2\}\), we define
\begin{equation}\label{eq:def-eta}
\eta_{j}(t) = \frac{\left(W + \sqrt{\Delta_{j}(t)}\right)^{2}}{C_{j}} \in F_{j}(t)^{\times}.
\end{equation}

\begin{lemma}\label{lem:eta-unit}
Let \(j \in \{1, 2\}\) and \(t \in \mathcal{T}\). Then \(\eta_{j}(t)\) is a unit of infinite order of \(F_{j}(t)\), and it satisfies
\[
\Norm_{F_{j}(t)/\QQ}\bigl(\eta_{j}(t)\bigr) = 1.
\]
Moreover, there exists a unique positive integer \(\nu_{j}(t)\) such that
\begin{equation}\label{eq:eta-fundamental-unit}
\eta_{j}(t) = \varepsilon_{j}(t)^{\nu_{j}(t)}.
\end{equation}
\end{lemma}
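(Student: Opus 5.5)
We compute directly with the explicit element. Write \(W = t^{n} - H^{n}\), \(\Delta = \Delta_{j}(t) = W^{2} - C_{j}\), and \(F = F_{j}(t)\). We use the factorization \(W = C_{j}k\) with \(k = k_{j}(t) \in \ZZ\), which is Lemma~\ref{lem:properties-Delta-D}.

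\emph{Integrality.} Expand
\[
\eta_{j}(t) = \frac{W^{2} + \Delta + 2W\sqrt{\Delta}}{C_{j}} = \frac{2W^{2} - C_{j}}{C_{j}} + \frac{2W}{C_{j}}\sqrt{\Delta} = (2C_{j}k^{2} - 1) + 2k\sqrt{\Delta}.
\]
Both coefficients are rational integers. Since \(\sqrt{\Delta} \in \mathcal{O}_{F}\) (it is a root of \(X^{2} - \Delta\)), we get \(\eta_{j}(t) \in \mathcal{O}_{F}\).

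\emph{Norm.} The conjugate of \(\eta_{j}(t)\) is \((W - \sqrt{\Delta})^{2}/C_{j}\). Hence
\[
\Norm_{F/\QQ}\bigl(\eta_{j}(t)\bigr) = \frac{(W^{2} - \Delta)^{2}}{C_{j}^{2}} = \frac{C_{j}^{2}}{C_{j}^{2}} = 1.
\]
An algebraic integer of norm \(\pm 1\) is a unit, so \(\eta_{j}(t) \in \mathcal{O}_{F}^{\times}\).

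\emph{Infinite order.} Since \(W > 0\) and \(\sqrt{\Delta} > 0\) by Lemma~\ref{lem:properties-Delta-D}, we have \(W + \sqrt{\Delta} > W\). Because \(W^{2} > C_{j}\), this gives
\[
\eta_{j}(t) > \frac{W^{2}}{C_{j}} > 1.
\]
The torsion units of \(F\) are only \(\pm 1\), so \(\eta_{j}(t)\) has infinite order.

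\emph{Power of the fundamental unit.} By Dirichlet's unit theorem (as normalized in Section~\ref{sec:notation}), \(\eta_{j}(t) = \pm\varepsilon_{j}(t)^{\nu}\) for a unique \(\nu \in \ZZ\) and a unique sign. Since \(\eta_{j}(t) > 0\) and \(\varepsilon_{j}(t) > 1\), the sign must be \(+\). Then \(\eta_{j}(t) > 1\) forces \(\nu > 0\). Uniqueness of \(\nu\) holds because \(\varepsilon_{j}(t)\) has infinite order.

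None of these steps is a real obstacle. The only point that needs care is the integrality step: it relies on \(C_{j} \mid W\), which is exactly the congruence \(t \equiv H \pmod{C_{j}}\) built into \(\mathcal{T}\).
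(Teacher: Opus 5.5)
Your proof is correct and takes essentially the same route as the paper: the norm computation, the bound \(\eta_{j}(t) > W^{2}/C_{j} > 1\), and the sign and positivity argument for \(\nu_{j}(t)\) are the same. The only cosmetic difference is in the integrality step: the paper uses the trace \(\eta_{j}(t) + \eta_{j}(t)^{-1} = 4C_{j}k_{j}(t)^{2} - 2\), while you write \(\eta_{j}(t) = (2C_{j}k_{j}(t)^{2} - 1) + 2k_{j}(t)\sqrt{\Delta_{j}(t)}\) explicitly, and both rest on \(C_{j} \mid W\).
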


\begin{proof}
Since
\[
\Norm_{F_{j}(t)/\QQ}\left(W + \sqrt{\Delta_{j}(t)}\right)
= W^{2} - \Delta_{j}(t) = C_{j},
\]
the norm of \(\eta_{j}(t)\) is \(1\). In addition,
\[
\begin{aligned}
\eta_{j}(t) + \eta_{j}(t)^{-1}
&= \frac{\left(W + \sqrt{\Delta_{j}(t)}\right)^{2} +
\left(W - \sqrt{\Delta_{j}(t)}\right)^{2}}{C_{j}}\\
&= \frac{4W^{2}}{C_{j}} - 2
= 4C_{j}k_{j}(t)^{2} - 2 \in \ZZ.
\end{aligned}
\]
Thus \(\eta_{j}(t)\) is a root of
\[
X^{2} - \bigl(4C_{j}k_{j}(t)^{2} - 2\bigr)X + 1,
\]
and hence it is an algebraic integer. Its inverse is also an algebraic integer, and therefore \(\eta_{j}(t)\) is a unit.

By Lemma~\ref{lem:properties-Delta-D}, \(W^{2} > C_{j}\) and \(W > 0\), and hence
\[
\eta_{j}(t) > \frac{W^{2}}{C_{j}} > 1.
\]
Consequently, \(\eta_{j}(t)\) has infinite order. Since \(\mathcal{O}_{F_{j}(t)}^{\times} = \{\pm 1\} \times \langle \varepsilon_{j}(t) \rangle\), we can write \(\eta_{j}(t) = \pm\varepsilon_{j}(t)^{\nu}\) with \(\nu \in \ZZ\). Since \(\eta_{j}(t) > 1\) and \(\varepsilon_{j}(t) > 1\), the sign is \(+\) and \(\nu > 0\). Furthermore, since \(\varepsilon_{j}(t) > 1\), the map \(\nu \mapsto \varepsilon_{j}(t)^{\nu}\) is injective, so that this \(\nu\) is unique. If we put \(\nu_{j}(t) = \nu\), then we obtain~\eqref{eq:eta-fundamental-unit}.
\end{proof}

\section{Divisibility of class numbers}\label{sec:divisibility}

In this section, we show that the class numbers of the real quadratic fields \(F_{j}(t)\) constructed in Section~\ref{sec:construction} are divisible by \(2^{e}n\).
Genus theory handles the \(2\)-part of the class number (Lemma~\ref{lem:genus-two-part}),
while the class group criterion handles the odd part \(n\) (Theorem~\ref{thm:class-group-criterion}); the two arguments are independent.
We first treat the \(2\)-part.

\begin{lemma}\label{lem:two-part-divisibility}
Let \(t \in \mathcal{T}\) and \(j \in \{1, 2\}\). Then
\[
2^{e} \mid h\bigl(F_{j}(t)\bigr).
\]
\end{lemma}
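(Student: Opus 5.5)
The plan is to apply Lemma~\ref{lem:genus-two-part}~(ii) to the real quadratic field \(F_{j}(t)\). It suffices to show that \(s = \omega(d_{F_{j}(t)})\) satisfies \(s \ge e + 2\), since then \(2^{s-2}\) divides \(h(F_{j}(t))\) and \(s - 2 \ge e\).

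By Lemma~\ref{lem:properties-Delta-D}, \(D_{j}(t)\) is a positive non-square integer. Hence \(F_{j}(t)\) is a real quadratic field, and Lemma~\ref{lem:genus-two-part} applies to it.

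The key input is Lemma~\ref{lem:genus-primes-ramify}: every prime in \(\mathcal{R}_{j}\) ramifies in \(F_{j}(t)/\QQ\), and therefore divides \(d_{F_{j}(t)}\). By the remark following~\eqref{eq:def-genus-primes}, \(\mathcal{R}_{j}\) consists of exactly \(e + 2\) distinct odd primes. This gives \(s \ge |\mathcal{R}_{j}| = e + 2 \ge 2\), so the hypothesis \(s \ge 2\) of Lemma~\ref{lem:genus-two-part}~(ii) holds.

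Concluding, Lemma~\ref{lem:genus-two-part}~(ii) yields \(2^{s-2} \mid h(F_{j}(t))\), and since \(s - 2 \ge e\), we get \(2^{e} \mid h(F_{j}(t))\). When \(e = 0\) the statement is trivial anyway.

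There is no real obstacle here. The only points to check are that the primes of \(\mathcal{R}_{j}\) are distinct, which Lemma~\ref{lem:choice-of-H} guarantees, and that ramification of a prime is equivalent to its dividing the discriminant, which is standard.
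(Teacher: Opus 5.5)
Your proof is correct and follows the paper's own argument: the \(e + 2\) distinct primes of \(\mathcal{R}_{j}\) ramify in \(F_{j}(t)\) by Lemma~\ref{lem:genus-primes-ramify}, so \(\omega(d_{F_{j}(t)}) \ge e + 2\). Lemma~\ref{lem:genus-two-part}~(ii) then gives \(2^{e} \mid h(F_{j}(t))\).
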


\begin{proof}
By~\eqref{eq:def-genus-primes}, \(|\mathcal{R}_{j}| = e + 2\).
By Lemma~\ref{lem:genus-primes-ramify}, each prime in \(\mathcal{R}_{j}\) ramifies in
\(F_{j}(t)/\QQ\), and hence
\[
\omega\bigl(d_{F_{j}(t)}\bigr) \ge e + 2.
\]
Since \(F_{j}(t)\) is a real quadratic field,
the assertion follows from Lemma~\ref{lem:genus-two-part}~\textrm{(ii)}.
\end{proof}

In what follows, let \(t \in \mathcal{T}\). For each \(j \in \{1, 2\}\), put
\begin{equation}\label{eq:def-QUS}
Q_{j} = \frac{C_{j}}{4c_{j}^{2}}, \qquad
U_{j}(t) = \frac{c_{j}^{2} - H^{n}t^{n}}{c_{j}}, \qquad
S_{j}(t) = \frac{U_{j}(t)^{2}}{Q_{j}}.
\end{equation}

\begin{lemma}\label{lem:S-identities}
Let \(t \in \mathcal{T}\). For each \(j \in \{1, 2\}\), we have
\begin{align}
\Delta_{j}(t) &= U_{j}(t)^{2} - 4Q_{j}t^{2n}, \label{eq:Delta-via-U}\\
S_{j}(t) &= 4C_{j}\bigl(1 + H^{n}k_{j}(t)\bigr)^{2} \in \ZZ, \label{eq:S-integral}\\
S_{j}(t)^{2} - 4S_{j}(t)t^{2n}
&= 16c_{j}^{2}\bigl(1 + H^{n}k_{j}(t)\bigr)^{2}\Delta_{j}(t). \label{eq:S-discriminant}
\end{align}
Consequently,
\begin{equation}\label{eq:S-field}
\QQ\left(\sqrt{S_{j}(t)^{2} - 4S_{j}(t)t^{2n}}\right) = F_{j}(t).
\end{equation}
\end{lemma}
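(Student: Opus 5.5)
The plan is to verify the three identities by direct algebraic manipulation, using the single relation \(t^{n} = W + H^{n} = C_{j}k_{j}(t) + H^{n}\) from~\eqref{eq:def-W-k}. Then \eqref{eq:S-field} follows because the radicand differs from \(\Delta_{j}(t)\) by a nonzero rational square factor. Throughout I fix \(j\) and \(t \in \mathcal{T}\), and write \(k = k_{j}(t)\), \(U = U_{j}(t)\), \(S = S_{j}(t)\), \(\Delta = \Delta_{j}(t)\).

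For \eqref{eq:Delta-via-U}, I will expand
\[
c_{j}^{2}U^{2} = c_{j}^{4} - 2c_{j}^{2}H^{n}t^{n} + H^{2n}t^{2n}.
\]
By~\eqref{eq:def-abC}, \(4Q_{j}c_{j}^{2} = C_{j} = H^{2n} - c_{j}^{2}\), so
\[
c_{j}^{2} \cdot 4Q_{j}t^{2n} = (H^{2n} - c_{j}^{2})t^{2n}.
\]
Subtracting the second expression from the first and dividing by \(c_{j}^{2}\) gives \(c_{j}^{2} - 2H^{n}t^{n} + t^{2n}\). This is \(\Delta\) by~\eqref{eq:def-Delta-D}.

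The key step for \eqref{eq:S-integral} is to rewrite the numerator of \(U\). Substituting \(t^{n} = C_{j}k + H^{n}\) and using \(H^{2n} = C_{j} + c_{j}^{2}\), I get
\[
H^{n}t^{n} = H^{n}C_{j}k + C_{j} + c_{j}^{2},
\]
hence
\[
c_{j}^{2} - H^{n}t^{n} = -C_{j}\bigl(1 + H^{n}k\bigr).
\]
Then
\[
S = \frac{U^{2}}{Q_{j}} = \frac{4\bigl(c_{j}^{2} - H^{n}t^{n}\bigr)^{2}}{C_{j}} = 4C_{j}\bigl(1 + H^{n}k\bigr)^{2}.
\]
This is an integer, since \(k \in \ZZ\) by Lemma~\ref{lem:properties-Delta-D}.

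For \eqref{eq:S-discriminant}, I will factor the left side as \(S(S - 4t^{2n})\). By \eqref{eq:Delta-via-U},
\[
S - 4t^{2n} = \frac{U^{2} - 4Q_{j}t^{2n}}{Q_{j}} = \frac{\Delta}{Q_{j}},
\]
so the left side equals \((S/Q_{j})\Delta\). By \eqref{eq:S-integral} and \(Q_{j} = C_{j}/(4c_{j}^{2})\),
\[
\frac{S}{Q_{j}} = 16c_{j}^{2}\bigl(1 + H^{n}k\bigr)^{2}.
\]
This gives \eqref{eq:S-discriminant}.

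Finally, \(k > 0\) by Lemma~\ref{lem:properties-Delta-D}, so \(1 + H^{n}k > 0\); also \(c_{j} > 0\). Hence \(16c_{j}^{2}(1 + H^{n}k)^{2}\) is a nonzero square, and adjoining the square root of the left side of \eqref{eq:S-discriminant} gives \(\QQ(\sqrt{\Delta}) = F_{j}(t)\), which is \eqref{eq:S-field}.

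There is no real obstacle here. The only point needing care is spotting the rewriting \(c_{j}^{2} - H^{n}t^{n} = -C_{j}(1 + H^{n}k)\), which is where the congruence \(t \equiv H \pmod{C_{j}}\) (through \(k \in \ZZ\)) enters.
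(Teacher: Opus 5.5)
Your proposal is correct and follows the paper's proof essentially step for step. You use the same rewriting \(c_{j}^{2} - H^{n}t^{n} = -C_{j}(1 + H^{n}k_{j}(t))\) to get the integral form of \(S_{j}(t)\), and your factorization \(S_{j}(t)\bigl(S_{j}(t) - 4t^{2n}\bigr) = (S_{j}(t)/Q_{j})\,\Delta_{j}(t)\) is the paper's \((U_{j}(t)/Q_{j})^{2}\bigl(U_{j}(t)^{2} - 4Q_{j}t^{2n}\bigr)\) in slightly different notation; you additionally write out the expansion for the first identity, which the paper leaves as a direct computation.
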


\begin{proof}
Equation~\eqref{eq:Delta-via-U} follows by direct computation. Furthermore, \(t^{n} = H^{n} + C_{j}k_{j}(t)\) gives
\[
c_{j}^{2} - H^{n}t^{n} = -C_{j}\bigl(1 + H^{n}k_{j}(t)\bigr).
\]
Therefore,
\[
S_{j}(t) = \frac{4(c_{j}^{2} - H^{n}t^{n})^{2}}{C_{j}}
= 4C_{j}\bigl(1 + H^{n}k_{j}(t)\bigr)^{2} \in \ZZ.
\]
Furthermore,
\[
\begin{aligned}
S_{j}(t)^{2} - 4S_{j}(t)t^{2n}
&= \left(\frac{U_{j}(t)}{Q_{j}}\right)^{2}
\left(U_{j}(t)^{2} - 4Q_{j}t^{2n}\right)\\
&= \left(\frac{4c_{j}(c_{j}^{2} - H^{n}t^{n})}{C_{j}}\right)^{2}\Delta_{j}(t)\\
&= 16c_{j}^{2}\bigl(1 + H^{n}k_{j}(t)\bigr)^{2}\Delta_{j}(t).
\end{aligned}
\]
By Lemma~\ref{lem:properties-Delta-D}, \(k_{j}(t) > 0\), so that
\[
4c_{j}\bigl(1 + H^{n}k_{j}(t)\bigr) \ne 0.
\]
Thus the factor multiplying \(\Delta_{j}(t)\) in~\eqref{eq:S-discriminant} is the square of a nonzero rational number,
and~\eqref{eq:S-field} follows.
\end{proof}

\begin{lemma}\label{lem:S-coprime-t}
Let \(t \in \mathcal{T}\). For each \(j \in \{1, 2\}\), we have
\[
\gcd\bigl(S_{j}(t), t^{2}\bigr) = 1.
\]
\end{lemma}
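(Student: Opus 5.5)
We need \(\gcd(S_j(t), t) = 1\). Use the integral formula \eqref{eq:S-integral}, \(S_{j}(t) = 4C_{j}(1 + H^{n}k_{j}(t))^{2}\). It therefore suffices to show that no prime \(r\) dividing \(t\) divides \(2\), \(C_{j}\), or \(1 + H^{n}k_{j}(t)\). The plan is to fix a prime \(r \mid t\) and exclude each factor in turn.

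First, \(t \equiv H \pmod{M_{0}}\) and \(16 \mid C_{j} \mid M_{0}\) give \(t \equiv H \pmod 2\). Since \(H\) is odd by Lemma~\ref{lem:choice-of-H}, the integer \(t\) is odd and \(r \ne 2\). Next, suppose \(r \mid C_{j}\). Then \(r \mid M_{0}\), so \(t \equiv H \pmod r\) forces \(r \mid H\). This contradicts \(\gcd(H, C_{j}) = 1\) from Lemma~\ref{lem:properties-abC}~(ii). Hence \(r \nmid C_{j}\).

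For the last factor, the argument is a congruence computation. From \(t^{n} = H^{n} + C_{j}k_{j}(t)\) we get
\[
C_{j}\bigl(1 + H^{n}k_{j}(t)\bigr) = C_{j} + H^{n}(t^{n} - H^{n}) = H^{n}t^{n} - c_{j}^{2},
\]
using \(C_{j} = H^{2n} - c_{j}^{2}\); this is also the identity behind Lemma~\ref{lem:S-identities}. If \(r \mid 1 + H^{n}k_{j}(t)\), then \(r \mid H^{n}t^{n} - c_{j}^{2}\), and together with \(r \mid t\) this gives \(r \mid c_{j}\). But \(c_{j} \mid M_{0}\), so again \(t \equiv H \pmod r\) gives \(r \mid H\). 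This contradicts \(\gcd(H, c_{j}) = 1\) from Lemma~\ref{lem:choice-of-H}~(i).

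Combining the three exclusions shows that no prime divides both \(S_{j}(t)\) and \(t\), hence \(\gcd(S_{j}(t), t^{2}) = 1\). No step is a real obstacle. The only point needing care is the last factor: one must rewrite \(1 + H^{n}k_{j}(t)\) through the identity above, rather than attempt a direct argument modulo \(r\), since \(k_{j}(t)\) is a quotient by \(C_{j}\) and is awkward to reduce directly.
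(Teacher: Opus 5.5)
Your proof is correct and follows essentially the same route as the paper: both use \(S_{j}(t) = 4C_{j}(1 + H^{n}k_{j}(t))^{2}\), exclude \(r = 2\) by the parity of \(t\), and rely on \(t \equiv H \pmod{M_{0}}\), \(\gcd(H, C_{j}c_{j}) = 1\), and the identity \(C_{j}(1 + H^{n}k_{j}(t)) = H^{n}t^{n} - c_{j}^{2}\). The only cosmetic difference is that the paper first records \(\gcd(t, C_{j}c_{j}) = 1\) and then obtains \(C_{j}(1 + H^{n}k_{j}(t)) \equiv -c_{j}^{2} \not\equiv 0 \pmod{r}\) in one step, whereas you exclude the factors \(C_{j}\) and \(1 + H^{n}k_{j}(t)\) separately.
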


\begin{proof}
Since \(M_{0}\) is a multiple of both \(C_{j}\) and \(c_{j}\), the system of congruence conditions~\eqref{eq:congruence-system} gives
\[
t \equiv H \pmod{C_{j}}, \qquad t \equiv H \pmod{c_{j}}.
\]
By Lemma~\ref{lem:properties-abC}~(ii), \(\gcd(H, C_{j}) = \gcd(H, c_{j}) = 1\), and hence
\[
\gcd(t, C_{j}) = \gcd(t, c_{j}) = 1;
\]
consequently \(\gcd(t, C_{j}c_{j}) = 1\).
Take a prime \(r\) dividing \(t\). Since \(C_{j}k_{j}(t) = t^{n} - H^{n}\), we find
\[
\begin{aligned}
C_{j}\bigl(1 + H^{n}k_{j}(t)\bigr)
&\equiv C_{j} - H^{2n}\\
&\equiv -c_{j}^{2} \not\equiv 0 \pmod{r}.
\end{aligned}
\]
In addition, by Lemma~\ref{lem:properties-abC}~(i), \(16 \mid C_{j} \mid M_{0}\), and since \(t \equiv H \pmod{M_{0}}\) and \(H\) is odd, \(t\) is also odd.
Hence \(r \ne 2\). From~\eqref{eq:S-integral} we obtain \(r \nmid S_{j}(t)\), and the assertion follows.
\end{proof}

\begin{lemma}\label{lem:reduction-at-auxiliary-primes}
Let \(t \in \mathcal{T}\), \(i \in \{1, 2, \dots, \omega(n)\}\), and \(j \in \{1, 2\}\). Then the following hold.
\begin{enumerate}[label=\textrm{(\roman*)}]
\item The primes \(l_{i, j}\) and \(q_{i, j}\) split completely in \(F_{j}(t)/\QQ\).
\item Let \(r \in \{l_{i, j}, q_{i, j}\}\), and let \(\mathfrak{r}\) be a prime ideal of \(F_{j}(t)\) lying above \(r\). Then
\[
\eta_{j}(t) \equiv E_{j}\quad\text{or}\quad E_{j}^{-1} \pmod{\mathfrak{r}}.
\]
\item \(S_{j}(t) \equiv A_{j} \pmod{l_{i, j}}\).
\end{enumerate}
\end{lemma}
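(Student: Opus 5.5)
Since \(t \equiv 0\) modulo each auxiliary prime \(r \in \{l_{i,j}, q_{i,j}\}\), everything reduces to computing \(W\), \(\Delta_{j}(t)\), \(\eta_{j}(t)\) and \(S_{j}(t)\) modulo \(r\) at \(t = 0\). By Lemma~\ref{lem:choice-of-auxiliary-primes}~(iii), \(r \nmid 2nc_{1}c_{2}C_{1}C_{2}\); in particular \(r\) is odd and \(C_{j}\), \(c_{j}\) are invertible modulo \(r\). Also \(r \nmid a_{j}b_{j} = C_{j}\), so \(r\) divides neither \(a_{j}\) nor \(b_{j}\), and \(E_{j}\) is a unit at \(r\).

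For (i), I would use the second expression in~\eqref{eq:def-Delta-D}, \(\Delta_{j}(t) = t^{2n} - 2H^{n}t^{n} + c_{j}^{2}\). This gives \(\Delta_{j}(t) \equiv c_{j}^{2} \pmod{r}\), which is a nonzero square modulo the odd prime \(r\). Since \(D_{j}(t) = \Delta_{j}(t)/16\) and \(r\) is odd, \(D_{j}(t)\) is also a nonzero square modulo \(r\). Hence \(r\) is unramified and splits in \(F_{j}(t)\).

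For (ii), fix a prime \(\mathfrak{r}\) above \(r\). Because \(r\) splits, \(\mathcal{O}_{F_{j}(t)}/\mathfrak{r} = \FF_{r}\), and \(\sqrt{\Delta_{j}(t)} \equiv \pm c_{j} \pmod{\mathfrak{r}}\) with a sign depending on \(\mathfrak{r}\). Moreover \(W \equiv -H^{n} \pmod{r}\). The element \(\eta_{j}(t)\) is a unit and \(C_{j}\) is invertible modulo \(r\), so its residue is computed directly:
\[
\eta_{j}(t) \equiv \frac{(-H^{n} \pm c_{j})^{2}}{C_{j}} = \frac{(H^{n} \mp c_{j})^{2}}{(H^{n}-c_{j})(H^{n}+c_{j})} \pmod{\mathfrak{r}}.
\]
For the sign giving \(H^{n} - c_{j}\) this equals \(a_{j}/b_{j} = E_{j}\); for the other sign it equals \(b_{j}/a_{j} = E_{j}^{-1}\). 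Both are legitimate in \(\FF_{r}\) since \(r \nmid a_{j}b_{j}\). The only point to check is that the factor \(C_{j}\) in the denominator of~\eqref{eq:def-eta} is invertible at \(\mathfrak{r}\), and that is exactly the condition \(r \nmid C_{j}\).

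For (iii), take \(r = l_{i,j}\). By~\eqref{eq:def-QUS}, \(U_{j}(t) \equiv c_{j}^{2}/c_{j} = c_{j} \pmod{r}\) and \(Q_{j} = C_{j}/(4c_{j}^{2})\), both \(r\)-adic units. Therefore
\[
S_{j}(t) = \frac{U_{j}(t)^{2}}{Q_{j}} \equiv \frac{4c_{j}^{4}}{C_{j}} = A_{j} \pmod{l_{i,j}}.
\]
Alternatively, the same congruence follows from~\eqref{eq:S-integral} using \(k_{j}(t) \equiv -H^{n}/C_{j}\).

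None of the steps is a real obstacle. The only care needed is in (ii): one must justify reducing the quotient defining \(\eta_{j}(t)\) at \(\mathfrak{r}\), and track that the sign of \(\sqrt{\Delta_{j}(t)}\) modulo \(\mathfrak{r}\) is what decides between \(E_{j}\) and \(E_{j}^{-1}\).
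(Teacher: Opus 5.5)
Your proposal is correct and follows essentially the same route as the paper. Both arguments reduce at \(t \equiv 0\) to get \(\Delta_{j}(t) \equiv c_{j}^{2}\), use \(r \nmid 2c_{j}C_{j}\) for the splitting and for invertibility of \(C_{j}\), and read off \(E_{j}\) or \(E_{j}^{-1}\) from the sign of \(\sqrt{\Delta_{j}(t)}\) modulo \(\mathfrak{r}\). For (iii), the paper writes \(S_{j}(t)\) as \(4(c_{j}^{2} - H^{n}t^{n})^{2}/C_{j}\), which is the same expression as your \(U_{j}(t)^{2}/Q_{j}\).
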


\begin{proof}
Let \(r \in \{l_{i, j}, q_{i, j}\}\). Since \(t \equiv 0 \pmod{r}\) by the system of congruence conditions~\eqref{eq:congruence-system}, we have
\[
\Delta_{j}(t) \equiv c_{j}^{2} \pmod{r}.
\]
Since \(r \nmid 2c_{j}\) by Lemma~\ref{lem:choice-of-auxiliary-primes}~(iii), the prime \(r\) splits completely in \(F_{j}(t)/\QQ\), which proves (i).

Since \(r \nmid C_{j}\) by Lemma~\ref{lem:choice-of-auxiliary-primes}~(iii), the integer \(C_{j}\) is invertible modulo \(\mathfrak{r}\).
Also, \(t \equiv 0 \pmod{r}\) gives
\[
W = t^{n} - H^{n} \equiv -H^{n} \pmod{r}.
\]
Since \(r\) splits completely in \(F_{j}(t)\), the residue field \(\mathcal{O}_{F_{j}(t)}/\mathfrak{r}\) is isomorphic to \(\FF_{r}\). From \(\bigl(\sqrt{\Delta_{j}(t)}\bigr)^{2} = \Delta_{j}(t) \equiv c_{j}^{2} \pmod{\mathfrak{r}}\), we therefore obtain \(\sqrt{\Delta_{j}(t)} \equiv \pm c_{j} \pmod{\mathfrak{r}}\).
If \(\sqrt{\Delta_{j}(t)} \equiv c_{j} \pmod{\mathfrak{r}}\), then reducing \(C_{j}\eta_{j}(t) = (W + \sqrt{\Delta_{j}(t)})^{2}\) modulo \(\mathfrak{r}\) gives
\[
\begin{aligned}
\eta_{j}(t)
&\equiv \frac{(-H^{n} + c_{j})^{2}}{C_{j}}
= \frac{(H^{n} - c_{j})^{2}}{(H^{n} - c_{j})(H^{n} + c_{j})}\\
&= \frac{H^{n} - c_{j}}{H^{n} + c_{j}} = E_{j} \pmod{\mathfrak{r}}.
\end{aligned}
\]
If \(\sqrt{\Delta_{j}(t)} \equiv -c_{j} \pmod{\mathfrak{r}}\), then a similar computation gives \(\eta_{j}(t) \equiv b_{j}^{2}/C_{j} = E_{j}^{-1} \pmod{\mathfrak{r}}\).

Finally, from the identity \(S_{j}(t) = 4(c_{j}^{2} - H^{n}t^{n})^{2}/C_{j}\) shown in the proof of Lemma~\ref{lem:S-identities}, together with \(t \equiv 0 \pmod{l_{i, j}}\) and \(l_{i, j} \nmid C_{j}\), we obtain
\[
S_{j}(t) = \frac{4(c_{j}^{2} - H^{n}t^{n})^{2}}{C_{j}} \equiv \frac{4c_{j}^{4}}{C_{j}} = A_{j} \pmod{l_{i, j}}.
\]
\end{proof}

\begin{lemma}\label{lem:exponent-and-unit-residue}
Let \(t \in \mathcal{T}\). For each \(i \in \{1, 2, \dots, \omega(n)\}\) and \(j \in \{1, 2\}\), we have
\[
p_{i} \nmid \nu_{j}(t).
\]
Moreover, for any prime ideal \(\mathfrak{l}\) of \(F_{j}(t)\) lying above \(l_{i, j}\), we have
\[
\varepsilon_{j}(t) \in (\mathcal{O}_{F_{j}(t)}/\mathfrak{l})^{\times p_{i}}.
\]
\end{lemma}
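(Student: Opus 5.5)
The plan is to reduce both claims to Lemma~\ref{lem:unit-residuosity-transfer}, using the auxiliary primes \(q_{i, j}\) and \(l_{i, j}\) as the two prime ideals \(\mathfrak{q}\) and \(\mathfrak{l}\) there. Fix \(t \in \mathcal{T}\), \(i\), and \(j\), and write \(p = p_{i}\), \(F = F_{j}(t)\), \(\eta = \eta_{j}(t)\), \(\varepsilon = \varepsilon_{j}(t)\). By Lemma~\ref{lem:reduction-at-auxiliary-primes}~(i), both \(l_{i, j}\) and \(q_{i, j}\) split completely in \(F\). Hence for any prime ideal \(\mathfrak{r}\) above either of them the residue field \(\mathcal{O}_{F}/\mathfrak{r}\) is \(\FF_{r}\), and \(p\)-th power residuosity modulo \(\mathfrak{r}\) of an element congruent to a rational number is just residuosity in \(\FF_{r}\). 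Before using this we must check that \(E_{j}\) is a unit at \(r\). This holds because \(r \nmid C_{j} = a_{j}b_{j}\) by Lemma~\ref{lem:choice-of-auxiliary-primes}~(iii).

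First step: the non-residue at \(q_{i, j}\). Take any prime \(\mathfrak{q}\) of \(F\) above \(q_{i, j}\). By Lemma~\ref{lem:reduction-at-auxiliary-primes}~(ii), \(\eta \equiv E_{j}^{\pm 1} \pmod{\mathfrak{q}}\). Lemma~\ref{lem:choice-of-auxiliary-primes}~(ii) gives \(E_{j} \notin \FF_{q_{i, j}}^{\times p}\). Since \(\FF_{q}^{\times}/\FF_{q}^{\times p}\) is a group, \(E_{j}^{-1}\) is also a non-residue. So \(\eta \notin (\mathcal{O}_{F}/\mathfrak{q})^{\times p}\).

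Second step: the residue at \(l_{i, j}\). Take any prime \(\mathfrak{l}\) above \(l_{i, j}\). Again \(\eta \equiv E_{j}^{\pm1} \pmod{\mathfrak{l}}\), and \(E_{j} \in \FF_{l_{i, j}}^{\times p}\) by Lemma~\ref{lem:choice-of-auxiliary-primes}~(i). Hence \(\eta \in (\mathcal{O}_{F}/\mathfrak{l})^{\times p}\). Lemma~\ref{lem:eta-unit} shows that \(\eta\) is a unit of infinite order. Lemma~\ref{lem:unit-residuosity-transfer} then applies directly and gives \(\varepsilon \in (\mathcal{O}_{F}/\mathfrak{l})^{\times p}\) for every such \(\mathfrak{l}\).

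Third step: \(p \nmid \nu_{j}(t)\). This is the argument inside the proof of Lemma~\ref{lem:unit-residuosity-transfer}, repeated with the sign fixed by \eqref{eq:eta-fundamental-unit}. If \(p \mid \nu_{j}(t)\), then \(\eta = (\varepsilon^{\nu_{j}(t)/p})^{p}\) is a \(p\)-th power in \(\mathcal{O}_{F}^{\times}\). It would then be a \(p\)-th power residue modulo \(\mathfrak{q}\), contradicting the first step.

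There is no serious obstacle; the only care needed is the bookkeeping that \(\mathfrak{r} \nmid a_{j}b_{j}\). This is what makes the congruence of Lemma~\ref{lem:reduction-at-auxiliary-primes}~(ii) meaningful, and it makes residuosity of \(E_{j}\) and \(E_{j}^{-1}\) equivalent.
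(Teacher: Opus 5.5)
Your proposal is correct and follows the paper's proof essentially step for step: you show that \(\eta_{j}(t)\equiv E_{j}^{\pm 1}\) is a \(p_{i}\)-th power non-residue above \(q_{i,j}\) and a residue above \(l_{i,j}\), derive \(p_{i}\nmid\nu_{j}(t)\) from the direct \(p_{i}\)-th power contradiction, and apply Lemma~\ref{lem:unit-residuosity-transfer} to transfer residuosity to \(\varepsilon_{j}(t)\). The only differences are that the paper proves \(p_{i}\nmid\nu_{j}(t)\) first and leaves implicit your check that \(r\nmid a_{j}b_{j}\); neither affects the argument.
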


\begin{proof}
By Lemma~\ref{lem:reduction-at-auxiliary-primes}, the residue class of \(\eta_{j}(t)\) modulo any prime ideal lying above \(q_{i, j}\) is \(E_{j}\) or \(E_{j}^{-1}\). By the choice of \(q_{i, j}\), \(E_{j}\) is a \(p_{i}\)-th power non-residue modulo \(q_{i, j}\), and hence so is \(E_{j}^{-1}\). Thus \(\eta_{j}(t)\) is a \(p_{i}\)-th power non-residue modulo every prime ideal lying above \(q_{i, j}\). If \(p_{i} \mid \nu_{j}(t)\), then by~\eqref{eq:eta-fundamental-unit} \(\eta_{j}(t)\) is a \(p_{i}\)-th power in \(F_{j}(t)\), and therefore it is a \(p_{i}\)-th power residue modulo every prime ideal lying above \(q_{i, j}\), a contradiction. Consequently, \(p_{i} \nmid \nu_{j}(t)\).

On the other hand, by Lemma~\ref{lem:reduction-at-auxiliary-primes} and the choice of \(l_{i, j}\), the unit \(\eta_{j}(t)\) is a \(p_{i}\)-th power residue modulo every prime ideal lying above \(l_{i, j}\). Applying Lemma~\ref{lem:unit-residuosity-transfer} with \(\mathfrak{l}\) a prime ideal above \(l_{i, j}\) and \(\mathfrak{q}\) a prime ideal above \(q_{i, j}\), we conclude that \(\varepsilon_{j}(t)\) is also a \(p_{i}\)-th power residue modulo every prime ideal lying above \(l_{i, j}\).
\end{proof}

\begin{lemma}\label{lem:criterion-hypotheses}
Let \(t \in \mathcal{T}\). Then, for each \(i \in \{1, 2, \dots, \omega(n)\}\) and \(j \in \{1, 2\}\), the following hold.
\begin{enumerate}[label=\textrm{(\roman*)}]
\item \(\gcd(S_{j}(t), t^{2}) = 1\).
\item The integer \(S_{j}(t)\) is a \(p_{i}\)-th power non-residue modulo \(l_{i, j}\).
\item The prime \(l_{i, j}\) divides \(t^{2}\).
\item The fundamental unit \(\varepsilon_{j}(t)\) of \(F_{j}(t)\) is a \(p_{i}\)-th power residue modulo every prime ideal of \(F_{j}(t)\) lying above \(l_{i, j}\).
\end{enumerate}
\end{lemma}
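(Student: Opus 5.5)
The four items are essentially collected from the preceding lemmas, so I will verify each one by pointing to the relevant earlier result and checking a few small compatibility conditions.

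Item (i) is exactly Lemma~\ref{lem:S-coprime-t}, and I will simply cite it.

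Item (iii) follows from the system of congruence conditions~\eqref{eq:congruence-system}. Since \(t \equiv 0 \pmod{l_{i, j}}\), we have \(l_{i, j} \mid t\), and hence \(l_{i, j} \mid t^{2}\).

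For (ii), I start from Lemma~\ref{lem:reduction-at-auxiliary-primes}~(iii), which gives \(S_{j}(t) \equiv A_{j} \pmod{l_{i, j}}\). Lemma~\ref{lem:choice-of-auxiliary-primes}~(i) gives \(A_{j} \notin \FF_{l_{i, j}}^{\times p_{i}}\). One thing must be checked: that this congruence makes sense, namely that \(l_{i, j}\) divides neither the numerator nor the denominator of \(A_{j} = 4c_{j}^{4}/C_{j}\). This holds because \(l_{i, j} \nmid 2c_{1}c_{2}C_{1}C_{2}\) by Lemma~\ref{lem:choice-of-auxiliary-primes}~(iii). It follows that \(S_{j}(t) \not\equiv 0 \pmod{l_{i, j}}\), consistent with (i), and that \(S_{j}(t)\) has the same residue class as \(A_{j}\) in \(\FF_{l_{i, j}}^{\times}\). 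Hence \(S_{j}(t)\) is a \(p_{i}\)-th power non-residue modulo \(l_{i, j}\).

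Item (iv) is the second assertion of Lemma~\ref{lem:exponent-and-unit-residue}, and I will cite it directly.

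No step here presents a real obstacle. The only point that needs any care is in (ii): transferring non-residuosity from the rational number \(A_{j}\) to the integer \(S_{j}(t)\) requires that \(l_{i, j}\) not divide \(C_{j}\) or \(2c_{j}\), and that is guaranteed by the choice of auxiliary primes.
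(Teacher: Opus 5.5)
Your proposal is correct and matches the paper's proof: (i) from Lemma~\ref{lem:S-coprime-t}, (ii) from Lemma~\ref{lem:reduction-at-auxiliary-primes}~(iii) together with Lemma~\ref{lem:choice-of-auxiliary-primes}~(i), (iii) from the congruence \(t \equiv 0 \pmod{l_{i, j}}\), and (iv) from Lemma~\ref{lem:exponent-and-unit-residue}. Your extra check that \(l_{i, j}\) divides neither \(4c_{j}^{4}\) nor \(C_{j}\) is correct; the paper leaves this implicit.
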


\begin{proof}
Assertion (i) follows from Lemma~\ref{lem:S-coprime-t}.

By Lemma~\ref{lem:reduction-at-auxiliary-primes}, \(S_{j}(t) \equiv A_{j} \pmod{l_{i, j}}\), and by Lemma~\ref{lem:choice-of-auxiliary-primes}~(i), \(A_{j}\) is a \(p_{i}\)-th power non-residue modulo \(l_{i, j}\). Hence (ii) holds.

Assertion (iii) follows from the congruence condition \(t \equiv 0 \pmod{l_{i, j}}\).

Assertion (iv) is precisely Lemma~\ref{lem:exponent-and-unit-residue}. The primes \(l_{i, j}\) and \(q_{i, j}\) used here are fixed in Lemma~\ref{lem:choice-of-auxiliary-primes} before \(t\) is chosen. Thus there is no circular dependence between \(t\) and the choice of the auxiliary primes.
\end{proof}

With these preparations, we apply Theorem~\ref{thm:class-group-criterion} with
\(X = S_{j}(t)\) and \(Z = t^{2}\) to obtain the main result of this section.

\begin{lemma}\label{lem:both-class-groups}
Let \(t \in \mathcal{T}\). Then the class groups of \(F_{1}(t)\) and \(F_{2}(t)\) both contain an element of order \(n\),
and moreover
\[
2^{e}n \mid h\bigl(F_{j}(t)\bigr)
\qquad (j = 1, 2).
\]
\end{lemma}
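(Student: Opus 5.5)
We apply Theorem~\ref{thm:class-group-criterion} to the field \(F_{j}(t)\) with a suitable radicand \(D\) and with \(X = S_{j}(t)\), \(Z = t^{2}\). The auxiliary primes are taken to be \(l_{i} = l_{i, j}\).

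\emph{Choosing the radicand.} We take
\[
D = S_{j}(t)^{2} - 4S_{j}(t)t^{2n}.
\]
With this choice,
\[
X^{2} - D = 4XZ^{n}
\]
holds identically. By~\eqref{eq:S-field}, \(\QQ(\sqrt{D}) = F_{j}(t)\). Moreover, \(D\) equals \(\Delta_{j}(t)\) times the nonzero square \(16c_{j}^{2}(1 + H^{n}k_{j}(t))^{2}\), so \(D\) is positive, since \(\Delta_{j}(t) > 0\) by Lemma~\ref{lem:properties-Delta-D}. It is also a non-square, since \(\Delta_{j}(t)\) is not a square by the same lemma.

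\emph{Checking the hypotheses.} The coprimality \(\gcd(X, Z) = 1\) is Lemma~\ref{lem:criterion-hypotheses}~(i). Condition (C1) holds by Lemma~\ref{lem:choice-of-auxiliary-primes}~(i). Conditions (C2), (C3) and (C4) are Lemma~\ref{lem:criterion-hypotheses}~(ii), (iii) and (iv) respectively. For (C4), the fundamental unit entering the criterion depends only on the field \(F\). Since \(F = F_{j}(t)\), this unit is \(\varepsilon_{j}(t)\), and no further work is needed.

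\emph{Conclusion.} Theorem~\ref{thm:class-group-criterion} requires \(n \ge 3\) odd, which is our standing assumption. It then gives an element of order \(n\) in \(\Cl(F_{j}(t))\) for \(j = 1, 2\). By Lagrange's theorem, \(n \mid h(F_{j}(t))\). By Lemma~\ref{lem:two-part-divisibility}, \(2^{e} \mid h(F_{j}(t))\). Since \(\gcd(2^{e}, n) = 1\), it follows that \(2^{e}n \mid h(F_{j}(t))\).

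\emph{Where care is needed.} The only real point is the bookkeeping in the second step. We must confirm that the radicand fed to the criterion is exactly \(X^{2} - 4XZ^{n}\), and that the field it generates is \(F_{j}(t)\) itself rather than merely some field with the same square class. Both follow from Lemma~\ref{lem:S-identities}. Once that is settled, every remaining hypothesis has already been established in the lemmas cited above.
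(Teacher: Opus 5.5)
Your proposal is correct and follows essentially the same route as the paper. The paper likewise applies Theorem~\ref{thm:class-group-criterion} with \((X, Z, D) = \bigl(S_{j}(t), t^{2}, S_{j}(t)^{2} - 4S_{j}(t)t^{2n}\bigr)\) and \(l_{i} = l_{i, j}\), verifies the hypotheses via Lemmas~\ref{lem:choice-of-auxiliary-primes}, \ref{lem:S-identities} and~\ref{lem:criterion-hypotheses}, and then combines the result with Lemma~\ref{lem:two-part-divisibility} using \(\gcd(2^{e}, n) = 1\).
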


\begin{proof}
Fix \(j \in \{1, 2\}\), and put
\[
\Delta_{j}^{\ast}(t) = S_{j}(t)^{2} - 4S_{j}(t)t^{2n}.
\]
By~\eqref{eq:S-discriminant},
\[
\Delta_{j}^{\ast}(t)
= \bigl(4c_{j}(1 + H^{n}k_{j}(t))\bigr)^{2}\Delta_{j}(t).
\]
By the proof of Lemma~\ref{lem:S-identities},
\(4c_{j}(1 + H^{n}k_{j}(t)) \ne 0\), and
by Lemma~\ref{lem:properties-Delta-D}, \(\Delta_{j}(t)\) is a positive non-square integer.
Since the square factor in the above identity is nonzero,
\(\Delta_{j}^{\ast}(t)\) is also a positive non-square integer, and
\[
\QQ\left(\sqrt{\Delta_{j}^{\ast}(t)}\right) = F_{j}(t).
\]

We apply Theorem~\ref{thm:class-group-criterion} with
\[
(X, Z, D) = \bigl(S_{j}(t), t^{2}, \Delta_{j}^{\ast}(t)\bigr).
\]
Then we have
\[
X^{2} - D = 4XZ^{n}.
\]
For each \(i\), take \(l_{i} = l_{i, j}\). By Lemma~\ref{lem:choice-of-auxiliary-primes}~(i), \(l_{i, j} \equiv 1 \pmod{p_{i}}\), so (C1) holds.
Assertions (ii), (iii), and (iv) of Lemma~\ref{lem:criterion-hypotheses} give (C2), (C3), and (C4), respectively, while assertion (i) of the same lemma gives \(\gcd(X, Z) = 1\).
Hence the class group of \(F_{j}(t)\) contains an element of order \(n\), and in particular
\[
n \mid h\bigl(F_{j}(t)\bigr).
\]
On the other hand, by Lemma~\ref{lem:two-part-divisibility},
\[
2^{e} \mid h\bigl(F_{j}(t)\bigr).
\]
Since \(n\) is odd, \(\gcd(2^{e}, n) = 1\), and hence
\[
2^{e}n \mid h\bigl(F_{j}(t)\bigr).
\]
Applying this argument to \(j = 1\) and \(j = 2\) proves the assertion.
\end{proof}

\section{Infinitude of pairs of quadratic fields}\label{sec:infinitude}

In this section, we show that the family of quadratic fields constructed in Section~\ref{sec:construction} contains infinitely many distinct fields.
The main tool is Lemma~\ref{lem:siegel-quadratic-fields}, a consequence of Siegel's theorem.

For each \(j \in \{1, 2\}\), put
\begin{equation}\label{eq:def-f}
f_{j}(X) = X^{2n} - 2H^{n}X^{n} + c_{j}^{2}.
\end{equation}
Then \(f_{j}(t) = \Delta_{j}(t)\).

\begin{lemma}\label{lem:f-separable}
Neither \(f_{1}(X)\) nor \(f_{2}(X)\) has a multiple root.
\end{lemma}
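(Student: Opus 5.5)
The most direct route is to view \(f_{j}\) as a quadratic in \(Y = X^{n}\). Write
\[
f_{j}(X) = g_{j}(X^{n}), \qquad g_{j}(Y) = Y^{2} - 2H^{n}Y + c_{j}^{2}.
\]
The roots of \(g_{j}\) are
\[
y_{\pm} = H^{n} \pm \sqrt{H^{2n} - c_{j}^{2}} = H^{n} \pm \sqrt{C_{j}}.
\]
By Lemma~\ref{lem:properties-abC}~(i), \(C_{j} > 0\), so \(y_{+}\) and \(y_{-}\) are distinct real numbers. Their product is \(y_{+}y_{-} = c_{j}^{2} \ne 0\), so both are nonzero. The roots of \(f_{j}\) are therefore exactly the complex numbers \(x\) with \(x^{n} = y_{+}\) or \(x^{n} = y_{-}\).

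Because \(y_{\pm} \ne 0\), each polynomial \(X^{n} - y_{\pm}\) is separable: its derivative \(nX^{n-1}\) vanishes only at \(0\), and \(0\) is not a root. So each contributes \(n\) distinct roots. The two sets of roots are disjoint, since a common root \(x\) would give \(y_{+} = x^{n} = y_{-}\). Altogether \(f_{j}\) has \(2n = \deg f_{j}\) distinct roots, and so it has no multiple root.

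An equivalent check is via the derivative: \(f_{j}'(X) = 2nX^{n-1}(X^{n} - H^{n})\). A common root of \(f_{j}\) and \(f_{j}'\) cannot be \(0\), because \(f_{j}(0) = c_{j}^{2} \ne 0\). So it must satisfy \(x^{n} = H^{n}\). Substituting gives \(f_{j}(x) = H^{2n} - 2H^{2n} + c_{j}^{2} = -C_{j} \ne 0\), a contradiction.

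There is no real obstacle here. The only point needing care is citing \(C_{j} \ne 0\) (in fact \(C_{j} > 0\)) and \(c_{j} \ne 0\); both come from Lemma~\ref{lem:properties-abC} and the definition \eqref{eq:def-c}. I would present the derivative argument, as it is the shortest.
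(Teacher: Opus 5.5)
Your proposal is correct, and the derivative argument you say you would present is essentially the paper's own proof: a common root of \(f_{j}\) and \(f_{j}'\) must satisfy \(X = 0\) or \(X^{n} = H^{n}\), where \(f_{j}\) takes the nonzero values \(c_{j}^{2}\) and \(-C_{j}\), respectively. Your alternative argument, which views \(f_{j}(X) = g_{j}(X^{n})\) with \(g_{j}\) having distinct nonzero roots, is also valid but not needed.
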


\begin{proof}
The derivative of \(f_{j}\) is
\[
f_{j}'(X) = 2nX^{n - 1}(X^{n} - H^{n}).
\]
If \(f_{j}\) and \(f_{j}'\) have a common root, then that root satisfies \(X = 0\) or \(X^{n} = H^{n}\). However,
\[
f_{j}(0) = c_{j}^{2} \ne 0,
\]
and when \(X^{n} = H^{n}\),
\[
f_{j}(X) = c_{j}^{2} - H^{2n} = -C_{j} \ne 0.
\]
Thus there is no common root.
\end{proof}

\begin{lemma}\label{lem:infinitely-many-fields}
The set
\[
\left\{F_{2}(t) : t \in \mathcal{T}\right\}
\]
contains infinitely many distinct real quadratic fields.
\end{lemma}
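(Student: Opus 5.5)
The plan is to apply Lemma~\ref{lem:siegel-quadratic-fields} directly, with \(f(X) = f_{2}(X)\) and \(\mathcal{U} = \mathcal{T}\). Three hypotheses need to be checked.

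First, \(f_{2}(X) = X^{2n} - 2H^{n}X^{n} + c_{2}^{2}\) lies in \(\ZZ[X] \subseteq \QQ[X]\) and has degree \(2n\). Since \(n \ge 3\) is odd, \(2n \ge 6 \ge 3\), so the degree condition holds.

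Second, by Lemma~\ref{lem:f-separable}, \(f_{2}\) has no multiple root, so its roots in \(\CC\) are distinct.

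Third, \(\mathcal{T}\) is infinite. Lemma~\ref{lem:congruence-system-solvable} gives infinitely many positive solutions of~\eqref{eq:congruence-system}. They form a single residue class modulo a fixed modulus, so only finitely many of them are smaller than \(2H\). Hence infinitely many satisfy \(t \ge 2H\), and these are the elements of \(\mathcal{T}\).

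Lemma~\ref{lem:siegel-quadratic-fields} then shows that \(\{\QQ(\sqrt{f_{2}(t)}) : t \in \mathcal{T}\}\) contains infinitely many distinct quadratic fields. It remains to identify these fields with \(F_{2}(t)\). We have \(f_{2}(t) = \Delta_{2}(t)\), and by~\eqref{eq:def-F}, \(F_{2}(t) = \QQ(\sqrt{\Delta_{2}(t)})\), so the two sets coincide. By Lemma~\ref{lem:properties-Delta-D}, \(\Delta_{2}(t)\) is a positive non-square, so each \(F_{2}(t)\) is a real quadratic field. This proves the statement.

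There is no real obstacle here. The only points needing care are the degree condition \(2n \ge 3\) and the passage from infinitely many solutions of the congruence system to infinitely many \(t \ge 2H\). Both are immediate.
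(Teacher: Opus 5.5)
Your proposal is correct and matches the paper's proof: apply Lemma~\ref{lem:siegel-quadratic-fields} to \(f_{2}\) and \(\mathcal{T}\), using Lemma~\ref{lem:f-separable} and \(2n \ge 6\), then identify \(\QQ(\sqrt{f_{2}(t)})\) with \(F_{2}(t)\) and invoke Lemma~\ref{lem:properties-Delta-D} to see that these fields are real. Your remark that infinitely many solutions of~\eqref{eq:congruence-system} satisfy \(t \ge 2H\) spells out a step the paper leaves implicit; it only needs the fact that finitely many positive integers lie below \(2H\).
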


\begin{proof}
By Lemma~\ref{lem:congruence-system-solvable}, \(\mathcal{T}\) is infinite.
In addition, by Lemma~\ref{lem:f-separable},
\(f_{2}\) is a polynomial of degree \(2n \ge 6\) with no multiple roots.
Consequently, applying Lemma~\ref{lem:siegel-quadratic-fields} to
\(f_{2}\) and \(\mathcal{T}\) shows that
\[
\left\{\QQ\left(\sqrt{f_{2}(t)}\right) : t \in \mathcal{T}\right\}
\]
contains infinitely many distinct quadratic fields.
Since \(f_{2}(t) = \Delta_{2}(t)\), equation~\eqref{eq:def-F} shows that this set is exactly \(\{F_{2}(t) : t \in \mathcal{T}\}\).
By Lemma~\ref{lem:properties-Delta-D}, all of these fields are real quadratic fields.
\end{proof}

\begin{remark}\label{rem:distinct-pairs}
For any \(t \in \mathcal{T}\), the relation \(D_{1}(t) = D_{2}(t) + m\) holds. Hence distinct fields \(F_{2}(t)\) give rise to distinct pairs of the form
\[
\left(F_{2}(t), F_{1}(t)\right)
= \left(\QQ(\sqrt{D_{2}(t)}), \QQ(\sqrt{D_{2}(t) + m})\right).
\]

Moreover, the two fields in each pair are distinct.
Indeed, put \(\lambda = \lambda_{1, -}\); then
\(v_{\lambda}(\Delta_{1}(t)) = 1\) by Lemma~\ref{lem:properties-Delta-D}.
On the other hand, since \(\Delta_{1}(t) - \Delta_{2}(t) = 16m\) and \(\lambda \nmid 16m\) by Lemma~\ref{lem:choice-of-H}~(iv),
it follows that \(v_{\lambda}(\Delta_{2}(t)) = 0\).
Since \(\lambda\) is an odd prime, it ramifies in \(F_{1}(t)\)
but not in \(F_{2}(t)\).
Therefore \(F_{1}(t) \ne F_{2}(t)\).
\end{remark}

\section{Proof of Theorem~\ref{thm:main}}\label{sec:proof-of-main-theorem}

First, we treat the case in which \(N\) is a power of \(2\), using genus theory alone.

\begin{proposition}\label{prop:pure-two-power-pairs}
For any integers \(e \ge 1\) and \(m \ge 1\),
there exist infinitely many distinct pairs of real quadratic fields
\[
\left(\QQ(\sqrt{D}), \QQ(\sqrt{D + m})\right),
\qquad D \in \ZZ, \quad D > 0,
\]
such that both class numbers are divisible by \(2^{e}\).
\end{proposition}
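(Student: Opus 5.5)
The plan is to use genus theory alone. By Lemma~\ref{lem:genus-two-part}~(ii), it suffices to produce infinitely many positive integers \(D\) such that \(D\) and \(D + m\) are non-squares, and such that each of \(\QQ(\sqrt{D})\) and \(\QQ(\sqrt{D+m})\) has at least \(e + 2\) odd primes ramifying. We also need infinitely many distinct fields among the resulting pairs.

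\emph{Step 1: choose the primes.} Fix \(2(e+2)\) distinct odd primes \(r_{1}, \dots, r_{e+2}, s_{1}, \dots, s_{e+2}\), none dividing \(m\).

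\emph{Step 2: impose congruences.} By the Chinese remainder theorem, impose
\[
D \equiv r_{k} \pmod{r_{k}^{2}}, \qquad D + m \equiv s_{k} \pmod{s_{k}^{2}} \qquad (1 \le k \le e+2).
\]
These moduli are pairwise coprime, so the system defines a single residue class \(D \equiv D_{0} \pmod{M}\) with \(M = \prod r_{k}^{2}s_{k}^{2}\). For any positive \(D\) in this class:
\begin{itemize}
\item \(v_{r_{k}}(D) = 1\), so \(D\) is not a square, and each odd prime \(r_{k}\) divides the square-free part of \(D\), hence ramifies in \(\QQ(\sqrt{D})\);
\item likewise \(v_{s_{k}}(D+m) = 1\), so each \(s_{k}\) ramifies in \(\QQ(\sqrt{D+m})\).
\end{itemize}
Therefore \(\omega(d_{K}) \ge e + 2\) for both fields, and Lemma~\ref{lem:genus-two-part}~(ii) gives \(2^{e} \mid h\) for each.

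\emph{Step 3: distinctness of the two fields.} This is not required, but it holds. Since \(r_{1} \nmid m\) and \(r_{1} \mid D\), we have \(r_{1} \nmid D+m\), so \(r_{1}\) ramifies in the first field but not in the second.

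\emph{Step 4: infinitely many distinct pairs.} This is the main point requiring care, since the classes \(D\) alone do not obviously give distinct fields. I would avoid Siegel's theorem and instead add one extra prime each time. Suppose finitely many fields \(\QQ(\sqrt{D^{(1)}}), \dots, \QQ(\sqrt{D^{(s)}})\) have already been obtained. Choose a new odd prime \(\pi\) not dividing \(mM\) and not dividing any of the discriminants \(d_{\QQ(\sqrt{D^{(i)}})}\). Add the congruence \(D \equiv \pi \pmod{\pi^{2}}\) to the system. A solution \(D > 0\) exists by the Chinese remainder theorem. Then \(\pi\) ramifies in \(\QQ(\sqrt{D})\) but in none of the previous fields, so \(\QQ(\sqrt{D})\) is new, and hence the pair \((\QQ(\sqrt{D}), \QQ(\sqrt{D+m}))\) is new. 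Induction then yields infinitely many distinct pairs.

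Alternatively, one could apply Lemma~\ref{lem:siegel-quadratic-fields} to the quadratic family \(D = D_{0} + Mx\). That lemma requires degree at least \(3\), so it does not apply directly, and the prime-adding argument above is simpler.
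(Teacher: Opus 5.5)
Your proof is correct and follows essentially the same route as the paper: genus theory via Lemma~\ref{lem:genus-two-part}~(ii), a Chinese-remainder system forcing \(v_{r}(D)=1\) and \(v_{s}(D+m)=1\) at \(e+2\) odd primes each, and infinitude by making a prime that is unramified in all previously obtained fields ramify in the new one. The only differences are cosmetic: the paper re-chooses all \(2(e+2)\) primes outside the set of previously ramified primes at each stage, so both fields of each new pair are new. You instead keep the base system fixed and append one fresh prime \(\pi\), which suffices because a new first component already yields a new pair.
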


\begin{proof}
Take an arbitrary finite set \(\mathcal{F}\) of quadratic fields.
Let \(\mathcal{S}\) be the set of all rational primes that ramify in at least one of the fields in \(\mathcal{F}\).
The set \(\mathcal{S}\) is finite.
Choose \(2(e + 2)\) mutually distinct odd primes
\[
r_{0, k},\ r_{1, k} \qquad (1 \le k \le e + 2)
\]
such that
\[
r_{j, k} > m, \qquad r_{j, k} \notin \mathcal{S}.
\]
By the Chinese remainder theorem, the integers \(D\) satisfying
\begin{equation}\label{eq:pure-two-crt}
\left\{
\begin{aligned}
D &\equiv r_{0, k} \pmod{r_{0, k}^{2}} &&(1 \le k \le e + 2),\\
D &\equiv r_{1, k} - m \pmod{r_{1, k}^{2}} &&(1 \le k \le e + 2)
\end{aligned}
\right.
\end{equation}
form a single residue class, which contains infinitely many positive integers.
Fix one such positive integer \(D\).

For each \(k\), we have
\[
v_{r_{0, k}}(D) = 1, \qquad v_{r_{1, k}}(D + m) = 1.
\]
Thus \(D\) and \(D + m\) are not squares, and
\[
L_{0} = \QQ(\sqrt{D}), \qquad L_{1} = \QQ(\sqrt{D + m})
\]
are real quadratic fields.
Furthermore, the primes \(r_{0, 1}, \dots, r_{0, e + 2}\) all ramify in \(L_{0}\), and
the primes \(r_{1, 1}, \dots, r_{1, e + 2}\) all ramify in \(L_{1}\).
Hence
\[
\omega(d_{L_{0}}) \ge e + 2, \qquad
\omega(d_{L_{1}}) \ge e + 2.
\]
By Lemma~\ref{lem:genus-two-part}~\textrm{(ii)},
\[
2^{e} \mid h(L_{0}), \qquad 2^{e} \mid h(L_{1}).
\]

In addition, \(D \equiv 0 \pmod{r_{0, 1}}\), whereas
\[
D + m \equiv m \not\equiv 0 \pmod{r_{0, 1}},
\]
where we have used \(r_{0, 1} > m\).
Therefore \(r_{0, 1}\) ramifies in \(L_{0}\) but not in \(L_{1}\), and
\(L_{0} \ne L_{1}\).
Furthermore, since \(r_{0, 1}, r_{1, 1} \notin \mathcal{S}\),
neither \(L_{0}\) nor \(L_{1}\) belongs to \(\mathcal{F}\).

Thus, given any finite set \(\mathcal{F}\) of quadratic fields, we can construct a pair with the required properties whose two fields lie outside \(\mathcal{F}\).
Repeating the construction after adding all previously obtained fields to \(\mathcal{F}\) yields infinitely many distinct pairs with the required properties.
\end{proof}

\begin{proof}[Proof of Theorem~\ref{thm:main}]
Let \(N \ge 2\) and \(m \ge 1\) be integers, and write
\[
N = 2^{e}n, \qquad e \ge 0, \qquad n\ \text{odd}.
\]

First, suppose that \(n = 1\). Then \(e \ge 1\), and
the assertion follows by applying Proposition~\ref{prop:pure-two-power-pairs}.

In what follows, suppose that \(n \ge 3\).
We carry out the construction of Sections~\ref{sec:construction}--\ref{sec:infinitude}
for these \(e\), \(n\), and \(m\).
First, we choose \(H\), \(\lambda_{j, \pm}\), and \(\xi_{j, k}\) as in Lemma~\ref{lem:choice-of-H},
and define \(C_{j}\), \(A_{j}\), \(E_{j}\) by~\eqref{eq:def-abC} and~\eqref{eq:def-AE}.
Here the \(e + 2\) primes in \(\mathcal{R}_{j}\) divide \(C_{j}\) exactly once and eventually ramify in \(F_{j}(t)\); these primes are responsible for the \(2^{e}\)-divisibility via genus theory.
Next, by Lemma~\ref{lem:choice-of-auxiliary-primes},
we choose all the auxiliary primes \(l_{i, j}\), \(q_{i, j}\) for the odd part \(n\).
Since these primes are chosen so as not to divide \(C_{1}C_{2}\),
they do not coincide with any of the primes in \(\mathcal{R}_{1} \cup \mathcal{R}_{2}\) used for genus theory.

We then let \(\mathcal{T}\) be the set of all positive integers \(t\) satisfying the system of congruence conditions~\eqref{eq:congruence-system} and \(t \ge 2H\).
Since \(M_{0}\) is a multiple of \(C_{1}\) and \(C_{2}\),
the condition \(t \equiv H \pmod{M_{0}}\) preserves the ramification conditions needed for genus theory.
On the other hand, the congruences \(t \equiv 0 \pmod{l_{i, j}q_{i, j}}\) transfer the residue conditions on \(A_{j}\) and \(E_{j}\) at the auxiliary primes \(l_{i, j}\), \(q_{i, j}\) to \(S_{j}(t)\) and \(\eta_{j}(t)\), thereby establishing the conditions needed for the odd part \(n\).
Lemma~\ref{lem:congruence-system-solvable}, which is an application of the Chinese remainder theorem, shows that these congruence conditions can be satisfied simultaneously.
The choices are made in the order
\[
(H, \{\lambda_{j, \pm}, \xi_{j, k}\})
\longrightarrow (A_{j}, E_{j})
\longrightarrow \{l_{i, j}, q_{i, j}\}_{i, j}
\longrightarrow t
\longrightarrow F_{j}(t), \eta_{j}(t), \varepsilon_{j}(t),
\]
and there is no circular dependence.

For any \(t \in \mathcal{T}\), put
\[
D = D_{2}(t).
\]
Then, by Lemma~\ref{lem:properties-Delta-D},
\[
\begin{aligned}
F_{2}(t) &= \QQ(\sqrt{D}), \\
F_{1}(t) &= \QQ(\sqrt{D + m})
\end{aligned}
\]
are both real quadratic fields.
By Lemma~\ref{lem:both-class-groups}, the class group of each field contains an element of order \(n\),
and
\[
N = 2^{e}n \mid h\bigl(F_{j}(t)\bigr)
\qquad (j = 1, 2).
\]
Furthermore, by Lemma~\ref{lem:infinitely-many-fields} and Remark~\ref{rem:distinct-pairs},
there are infinitely many distinct pairs of this form.
This completes the proof of Theorem~\ref{thm:main}.
\end{proof}

\section{The imaginary quadratic case}\label{sec:imaginary-quadratic-case}

In this section, as a supplementary application of the construction used in the real quadratic case,
we give another proof of the following known result of Xie and Chao.

\begin{theorem}[{Xie--Chao \cite[Theorem~1.2]{XieChao2020}}]\label{thm:xie-chao-imaginary}
For any odd integer \(n \ge 3\) and any integer \(m \ge 1\),
there exist infinitely many distinct pairs of imaginary quadratic fields
\[
\left(\QQ(\sqrt{D}), \QQ(\sqrt{D + m})\right),
\qquad D \in \ZZ, \quad D + m < 0,
\]
such that the class group of each field contains an element of order \(n\).
\end{theorem}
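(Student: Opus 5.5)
The plan is to rerun the construction of Sections~\ref{sec:construction}--\ref{sec:infinitude} with two simplifications. First, the unit-residuosity condition (C4) and the auxiliary primes $q_{i,j}$ disappear, since for an imaginary quadratic field other than $\QQ(\sqrt{-1}),\QQ(\sqrt{-3})$ the unit group is $\{\pm1\}$ and $-1$ is a $p_i$-th power. Second, the $2$-part is not needed, because $n$ is odd.

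\textbf{Step 1: imaginary criterion.} I will first record the imaginary analogue of Theorem~\ref{thm:class-group-criterion}. Let $D<0$ be a non-square and $F=\QQ(\sqrt D)$ with $F\ne\QQ(\sqrt{-1}),\QQ(\sqrt{-3})$. Suppose $X^2-D=4XZ^n$ with $\gcd(X,Z)=1$, and for each $p_i\mid n$ there is a prime $l_i$ satisfying (C1)--(C3). Then $\Cl(F)$ contains an element of order $n$. The proof is verbatim that of Theorem~\ref{thm:class-group-criterion}, with $\alpha=\pm\beta^{p_i}$ in place of $\alpha=\pm\varepsilon^s\beta^{p_i}$.

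\textbf{Step 2: choice of $D$.} The simplest admissible shape is $X=4c_j^2$ and $Z=t^2$. Then
\[
X^2-4XZ^n=16c_j^2\bigl(c_j^2-t^{2n}\bigr),
\]
so the field is $\QQ(\sqrt{c_j^2-t^{2n}})$. With $c_1=4m+1$ and $c_2=4m-1$ as in~\eqref{eq:def-c}, put $D_j(t)=(c_j^2-t^{2n})/16$. Then $D_1(t)-D_2(t)=m$ by~\eqref{eq:c-square-difference}, and $D_1(t)=D_2(t)+m<0$ once $t$ is large.

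Integrality needs $t^{2n}\equiv c_j^2\pmod{16}$. I expect this to be arranged by a congruence on $t$ modulo $8$. If $m$ is even, then $c_j^2\equiv1\pmod{16}$, so I take $t\equiv\pm1\pmod 8$. If $m$ is odd, then $c_j^2\equiv9\pmod{16}$; I take $t\equiv\pm3\pmod8$, which gives $t^{2n}\equiv9^n\equiv9$ since $n$ is odd.

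Coprimality $\gcd(4c_j^2,t^2)=1$ follows from requiring $t\equiv1\pmod{\rad(c_1c_2)}$. Non-squareness and distinctness of the two fields come from the Hensel/CRT device of Lemma~\ref{lem:choice-of-H}. I choose primes $\lambda_j\nmid 2nmc_1c_2$ dividing some value of $X^{2n}-c_j^2$, and then impose $t\equiv y_j\pmod{\lambda_j^2}$ so that $v_{\lambda_j}(D_j(t))=1$. This forces $\lambda_1$ to ramify in $F_1(t)$ but not in $F_2(t)$, exactly as in Remark~\ref{rem:distinct-pairs}. It also gives $|D_j(t)|>4$, which excludes the two exceptional fields.

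\textbf{Step 3: auxiliary primes and infinitude.} By Chebotarev (Corollary~\ref{cor:chebotarev-rational}) applied to $\QQ(\zeta_{p_i},\sqrt[p_i]{4c_j^2})$, I pick primes $l_{i,j}\equiv1\pmod{p_i}$ with $4c_j^2\notin\FF_{l_{i,j}}^{\times p_i}$. I make these mutually distinct and coprime to everything above, impose $t\equiv0\pmod{l_{i,j}}$, and combine all conditions by the Chinese remainder theorem. Then (C2) and (C3) hold, and Step~1 gives an element of order $n$ in both $\Cl(F_1(t))$ and $\Cl(F_2(t))$. Infinitude follows from Lemma~\ref{lem:siegel-quadratic-fields} applied to $f(X)=c_2^2-X^{2n}$, which is separable of degree $2n\ge6$.

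\textbf{Main obstacle and fallback.} The step I expect to be the main obstacle is the Chebotarev step. It requires $4c_j^2\notin K_i^{\times p_i}$, equivalently (by Lemma~\ref{lem:coprime-degree-nth-powers}) that $c_j$ is not a $p_i$-th power in $\QQ$. This can fail for special $m$ because $X$ here is a constant. If it fails, I will first try a different factorization $(c_1-c_2)(c_1+c_2)=16m\,s^2$ that removes the $p_i$-th power obstruction, rescaling $D_j$ accordingly. Failing that, I will revert to the $t$-dependent $X=S_j(t)$ of Lemma~\ref{lem:S-identities}, with the sign of the $t^{2n}$ term reversed. In that version $S_j(t)\equiv A_j\pmod{l_{i,j}}$, and Lemma~\ref{lem:AE-independent} (the part concerning $A_j$ alone) guarantees that $A_j$ is not a $p_i$-th power, so the Chebotarev step goes through.
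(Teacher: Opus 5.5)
Your proof is correct, and it is exactly the paper's construction specialized to \(B = 0\): then \(\widetilde{C}_{j} = c_{j}^{2}\), \(\widetilde{A}_{j} = \widetilde{S}_{j}(t) = 4c_{j}^{2}\), and \(\widetilde{\Delta}_{j}(t) = c_{j}^{2} - t^{2n}\).

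The paper instead takes an even \(B > c_{1}\) and primes \(\rho_{j}\) with \(v_{\rho_{j}}(B^{2n} + c_{j}^{2}) = 1\), so that one prime does two jobs. First, \(v_{\rho_{j}}(\widetilde{A}_{j}) = -1\) certifies \(\widetilde{A}_{j} \notin \QQ^{\times p_{i}}\) for the Chebotarev step. Second, \(v_{\rho_{j}}(\widetilde{\Delta}_{j}(t)) = 1\) supplies a ramified prime that rules out \(\QQ(\sqrt{-1})\) and \(\QQ(\sqrt{-3})\) and separates the two fields of a pair. The cost is the extra parameter \(B\) and the condition \(t \equiv -B \pmod{\widetilde{M}}\), which is needed to make \(\widetilde{S}_{j}(t)\) integral. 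In exchange, the argument stays parallel to the real quadratic construction with \(H\) and \(S_{j}(t)\).

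You split the two jobs. Ramification comes from your separately chosen \(\lambda_{j}\). Integrality of \(X\) is automatic because \(X = 4c_{j}^{2}\) is constant; indeed \(\alpha = (c_{j} + \sqrt{c_{j}^{2} - t^{2n}})^{2}\) in the criterion. This gives a shorter proof.

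Two points need fixing.

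First, the obstacle you single out does not exist, and the equivalence you state is wrong. Since \(p_{i}\) is odd, \(4c_{j}^{2} = (2c_{j})^{2}\) lies in \(\QQ^{\times p_{i}}\) if and only if \(2c_{j}\) does, not \(c_{j}\). Moreover \(v_{2}(2c_{j}) = 1\), because \(c_{j} = 4m \pm 1\) is odd. Hence \(4c_{j}^{2} \notin K_{i}^{\times p_{i}}\) for every \(m\) by Lemma~\ref{lem:coprime-degree-nth-powers}. Your main construction therefore works unconditionally, and the fallbacks should be dropped. The second fallback would not run verbatim anyway: Lemma~\ref{lem:AE-independent} uses primes exactly dividing \(H^{n} \mp c_{j}\), which say nothing about \(H^{2n} + c_{j}^{2}\). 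This is precisely why the paper introduces the \(\rho_{j}\).

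Second, \(|D_{j}(t)| > 4\) does not exclude the exceptional fields; for instance \(\QQ(\sqrt{-25}) = \QQ(\sqrt{-1})\). The correct reason is that \(\lambda_{j}\) ramifies in \(F_{j}(t)\), whereas only \(2\) ramifies in \(\QQ(\sqrt{-1})\) and only \(3\) ramifies in \(\QQ(\sqrt{-3})\). You also have \(\lambda_{j} \ne 3\) automatically: \(3\) divides \(m(4m + 1)(4m - 1) = mc_{1}c_{2}\), so \(\lambda_{j} \nmid 2nmc_{1}c_{2}\) already forces \(\lambda_{j} \ge 5\).
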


The key difference from the real quadratic case is that every imaginary quadratic field
other than \(\QQ(\sqrt{-1})\) and \(\QQ(\sqrt{-3})\) has unit group \(\{\pm 1\}\).
Thus the analogue of condition \textrm{(C4)} of Theorem~\ref{thm:class-group-criterion}, which concerns the fundamental unit in the real quadratic case, is unnecessary here, and we may use the following criterion.

\begin{lemma}[Class group criterion for imaginary quadratic fields]\label{lem:imaginary-class-group-criterion}
Let \(D < 0\) be an integer, and put \(F = \QQ(\sqrt{D})\).
Assume that \(F\) is neither \(\QQ(\sqrt{-1})\) nor \(\QQ(\sqrt{-3})\).
Let \(n \ge 3\) be an odd integer, and let \(p_{1}, \dots, p_{\omega(n)}\) be
all the distinct prime factors of \(n\).
Suppose that integers \(X, Z\) satisfy
\[
X^{2} - D = 4XZ^{n}, \qquad \gcd(X, Z) = 1,
\]
and that, for each \(i \in \{1, \dots, \omega(n)\}\), there exists a prime \(l_{i}\) satisfying
\[
l_{i} \equiv 1 \pmod{p_{i}}, \qquad
X \notin \FF_{l_{i}}^{\times p_{i}}, \qquad l_{i} \mid Z.
\]
Then the class group of \(F\) contains an element of order \(n\).
\end{lemma}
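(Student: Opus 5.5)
The plan is to repeat the proof of Theorem~\ref{thm:class-group-criterion} almost verbatim. The only change is that the unit group is now \(\{\pm 1\}\), so hypothesis (C4) and the control of the fundamental unit are no longer needed. Put
\[
\alpha = \frac{X - 2Z^{n} + \sqrt{D}}{2}.
\]
From \(X^{2} - D = 4XZ^{n}\) we get \(\alpha + \alpha^{\sigma} = X - 2Z^{n}\) and \(\alpha\alpha^{\sigma} = Z^{2n}\), so \(\alpha \in \mathcal{O}_{F}\). Note that \(Z \ne 0\): otherwise \(D = X^{2} \ge 0\), contradicting \(D < 0\).

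The first step shows that \((\alpha)\) is a \(2n\)-th power of an ideal. As before, \(\gcd(X, Z) = 1\) makes \((\alpha)\) and \((\alpha^{\sigma})\) coprime. Since \((\alpha)(\alpha^{\sigma}) = (Z)^{2n}\), unique factorization gives an integral ideal \(\mathfrak{a}\) with \((\alpha) = \mathfrak{a}^{2n}\).

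The second step is the local computation at \(l_{i}\). Because \(l_{i} \mid Z\), we have \(D \equiv X^{2} \pmod{l_{i}}\). Because \(X\) is a \(p_{i}\)-th power non-residue modulo \(l_{i}\), we have \(l_{i} \nmid X\). Also \(l_{i}\) is odd, since \(l_{i} \equiv 1 \pmod{p_{i}}\) with \(p_{i}\) odd. Hence \(l_{i}\) splits in \(F\). Choose \(\mathfrak{l}_{i} \mid l_{i}\) with \(\sqrt{D} \equiv X \pmod{\mathfrak{l}_{i}}\); then \(\alpha \equiv X \pmod{\mathfrak{l}_{i}}\). The residue field is \(\FF_{l_{i}}\), so \(\alpha\) is a \(p_{i}\)-th power non-residue modulo \(\mathfrak{l}_{i}\).

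The third step is the non-principality argument. Suppose \(\mathfrak{a}^{2n/p_{i}} = (\beta)\). Then \(\alpha = u\beta^{p_{i}}\) for some unit \(u\). Since \(F \ne \QQ(\sqrt{-1}), \QQ(\sqrt{-3})\), we have \(u = \pm 1\), and \(\pm 1 = (\pm 1)^{p_{i}}\) because \(p_{i}\) is odd. So \(\alpha\) would be a \(p_{i}\)-th power residue modulo \(\mathfrak{l}_{i}\), a contradiction. Thus \([\mathfrak{a}]^{2n/p_{i}} \ne 1\) for every \(i\). The order \(d\) of \([\mathfrak{a}]\) divides \(2n\) and is divisible by \(n\), so either \([\mathfrak{a}]\) or \([\mathfrak{a}]^{2}\) has order \(n\).

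I expect no genuine obstacle. The only points needing care are that the unit group really is \(\{\pm 1\}\), which is exactly where the exceptional fields are excluded, and the nondegeneracy \(Z \ne 0\).
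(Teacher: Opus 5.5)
Your proposal is correct and follows the paper's proof essentially step for step: the same element \(\alpha\), the coprimality argument giving \((\alpha) = \mathfrak{a}^{2n}\), the choice of \(\mathfrak{l}_{i}\) with \(\alpha \equiv X \pmod{\mathfrak{l}_{i}}\), and the use of \(\mathcal{O}_{F}^{\times} = \{\pm 1\}\) with \(p_{i}\) odd to rule out \(\mathfrak{a}^{2n/p_{i}}\) being principal. The only cosmetic differences are that you get \(l_{i} \nmid X\) from the non-residuosity hypothesis rather than from \(\gcd(X, Z) = 1\), and that you conclude via the order of \([\mathfrak{a}]\) rather than directly via \(g = [\mathfrak{a}]^{2}\); both variants are fine.
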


\begin{proof}
Let \(\sigma\) be the nontrivial automorphism of \(F/\QQ\), and put
\[
\alpha = \frac{X - 2Z^{n} + \sqrt{D}}{2}.
\]
By the assumed equation,
\[
\alpha + \alpha^{\sigma} = X - 2Z^{n}, \qquad
\alpha\alpha^{\sigma} = Z^{2n}.
\]
Therefore \(\alpha\) is a root of the monic polynomial
\(T^{2} - (X - 2Z^{n})T + Z^{2n}\) with integer coefficients, and
\(\alpha \in \mathcal{O}_{F}\).
Also, if \(Z = 0\), then \(D = X^{2}\), contrary to \(D < 0\); hence
\(Z \ne 0\) and \(\alpha \ne 0\).

Suppose that a prime ideal divides both \((\alpha)\) and \((\alpha^{\sigma})\). Then the rational prime below it divides both \(Z^{2n}\) and \(X - 2Z^{n}\), contrary to \(\gcd(X, Z) = 1\). Hence the two ideals are coprime.
Since \((\alpha)(\alpha^{\sigma}) = (Z)^{2n}\), unique factorization of ideals shows that
there exists an integral ideal \(\mathfrak{a}\) such that
\[
(\alpha) = \mathfrak{a}^{2n}.
\]

Fix \(i\). From \(l_{i} \mid Z\) and \(\gcd(X, Z) = 1\), we see that
\(l_{i} \nmid X\), and
\[
D \equiv X^{2} \not\equiv 0 \pmod{l_{i}}.
\]
Since \(p_{i}\) is an odd prime and \(l_{i} \equiv 1 \pmod{p_{i}}\),
the prime \(l_{i}\) is odd, and it splits completely in \(F/\QQ\).
If we choose a prime ideal \(\mathfrak{l}_{i}\) lying above \(l_{i}\) such that
\(\sqrt{D} \equiv X \pmod{\mathfrak{l}_{i}}\), then
\[
\alpha \equiv X \pmod{\mathfrak{l}_{i}}, \qquad
\mathcal{O}_{F}/\mathfrak{l}_{i} \cong \FF_{l_{i}}.
\]
It follows that \(\alpha\) is a \(p_{i}\)-th power non-residue modulo \(\mathfrak{l}_{i}\).

If \(\mathfrak{a}^{2n/p_{i}}\) is principal, then
there exists \(\beta \in \mathcal{O}_{F} \setminus \{0\}\) with
\(\mathfrak{a}^{2n/p_{i}} = (\beta)\).
Since \(\mathcal{O}_{F}^{\times} = \{\pm 1\}\), we have
\[
\alpha = \pm\beta^{p_{i}}.
\]
Since the residue class of \(\alpha\) is nonzero, the residue class of \(\beta\) is also nonzero,
and since \(p_{i}\) is odd, the right-hand side is a \(p_{i}\)-th power residue modulo \(\mathfrak{l}_{i}\).
This is a contradiction.
Hence, for all \(i\),
\[
[\mathfrak{a}]^{2n/p_{i}} \ne 1.
\]
Now put \(g = [\mathfrak{a}]^{2}\). Then
\(g^{n} = 1\) and \(g^{n/p_{i}} \ne 1\) for all \(i\).
If the order of \(g\) were a proper divisor of \(n\),
then it would divide \(n/p_{i}\) for some \(i\), a contradiction.
Hence the order of \(g\) is exactly \(n\).
\end{proof}

With this criterion, the explicit unit \(\eta_{j}(t)\) and
the auxiliary primes \(q_{i, j}\), which were used in the real quadratic case to control the \(p_{i}\)-th power residuosity of the explicit unit, are no longer needed.
In what follows, we modify the construction so that the radicands become negative,
and we give another proof using this criterion and Lemma~\ref{lem:siegel-quadratic-fields}.

\begin{proof}[Another proof of Theorem~\ref{thm:xie-chao-imaginary}]
Let \(n \ge 3\) be an odd integer and let \(m \ge 1\) be an integer.
Let \(p_{1}, \dots, p_{\omega(n)}\) be all the distinct prime factors of \(n\),
and put \(K_{i} = \QQ(\zeta_{p_{i}})\).
We use \(c_{1} = 4m + 1\) and \(c_{2} = 4m - 1\) from~\eqref{eq:def-c}.

First, we choose an integer \(B\) and primes \(\rho_{1}, \rho_{2}\).
For each \(j \in \{1, 2\}\), put
\[
g_{j}(X) = X^{2n} + c_{j}^{2}.
\]
By Schur's theorem, there exist infinitely many primes that divide some integer value of \(g_{j}\).
Thus we can choose mutually distinct primes
\(\rho_{1}, \rho_{2}\) not dividing \(6nmc_{1}c_{2}\) and integers \(x_{1}, x_{2}\) such that
\[
g_{j}(x_{j}) \equiv 0 \pmod{\rho_{j}}.
\]
Since \(\rho_{j} \nmid c_{j}\), we find that \(\rho_{j} \nmid x_{j}\), and
\[
g'_{j}(x_{j}) = 2n x_{j}^{2n - 1} \not\equiv 0 \pmod{\rho_{j}}.
\]
Hence, among the \(\rho_{j}\) residue classes modulo \(\rho_{j}^{2}\) lifting the residue class of \(x_{j}\) modulo \(\rho_{j}\),
exactly one is a root of \(g_{j}\) modulo \(\rho_{j}^{2}\).
If we choose a lift \(y_{j}\) different from this one, then
\[
v_{\rho_{j}}\bigl(g_{j}(y_{j})\bigr) = 1.
\]

Since the moduli \(2\), \(\rad(c_{1}c_{2})\), \(\rho_{1}^{2}\), and \(\rho_{2}^{2}\) are pairwise coprime,
the Chinese remainder theorem yields an integer \(B\) satisfying
\[
\begin{aligned}
B &\equiv 0 \pmod{2}, \\
B &\equiv 1 \pmod{\rad(c_{1}c_{2})}, \\
B &\equiv y_{j} \pmod{\rho_{j}^{2}} \qquad (j = 1, 2).
\end{aligned}
\]
Replacing \(B\) by a sufficiently large positive representative of this residue class, we may assume that \(B > c_{1}\).
Then \(B\) is even, and
\[
\gcd(B, c_{1}c_{2}) = 1, \qquad
v_{\rho_{j}}(B^{2n} + c_{j}^{2}) = 1.
\]
In what follows, put
\[
\widetilde{C}_{j} = B^{2n} + c_{j}^{2}, \qquad
\widetilde{A}_{j} = \frac{4c_{j}^{4}}{\widetilde{C}_{j}}, \qquad
\widetilde{M} = \lcm(\widetilde{C}_{1}, \widetilde{C}_{2}, c_{1}, c_{2}).
\]
Then \(\widetilde{C}_{1}, \widetilde{C}_{2}\) are positive odd integers,
and \(\widetilde{M}\) is also odd.
We also have \(\gcd(B, \widetilde{M}) = 1\), because
\(\gcd(B, \widetilde{C}_{j}) = \gcd(B, c_{j}^{2}) = 1\) for \(j = 1, 2\).

Next, we choose auxiliary primes \(r_{i, j}\).
Fix \(i\) and \(j\), and write \(p = p_{i}\) and \(K = K_{i}\).
Since \(\rho_{j} \nmid 2c_{j}\), we have
\[
v_{\rho_{j}}(\widetilde{A}_{j}) = -1.
\]
Hence \(\widetilde{A}_{j} \notin \QQ^{\times p}\), and
\(\widetilde{A}_{j} \notin K^{\times p}\) by \([K:\QQ] = p - 1\) and Lemma~\ref{lem:coprime-degree-nth-powers}.
To clear the denominator, put
\[
\gamma_{j} = \widetilde{A}_{j}\widetilde{C}_{j}^{p}
= 4c_{j}^{4}\widetilde{C}_{j}^{p - 1} \in \ZZ \setminus \{0\}, \qquad
L = K(\sqrt[p]{\gamma_{j}}) = K(\sqrt[p]{\widetilde{A}_{j}}).
\]
Since \(\widetilde{A}_{j} \notin K^{\times p}\), Proposition~\ref{prop:kummer-basic}~(i) and (iii) show that \(L/K\) is cyclic of degree \(p\).
Since \(L\) is the splitting field of \(X^{p} - \gamma_{j}\) over \(\QQ\), the extension \(L/\QQ\) is Galois.

Take a nontrivial element \(\tau\) of \(\Gal(L/K)\),
and let \(\mathcal{C}\) be its conjugacy class in \(\Gal(L/\QQ)\).
Since \(K/\QQ\) is a Galois extension,
every element of \(\mathcal{C}\) restricts to the identity on \(K\) and is different from \(\id_{L}\).
By Chebotarev's density theorem (Corollary~\ref{cor:chebotarev-rational}), there exist infinitely many rational primes \(r\) unramified in \(L/\QQ\) whose Frobenius class is \(\mathcal{C}\).
Choose one that does not divide \(2nc_{1}c_{2}\widetilde{C}_{1}\widetilde{C}_{2}\).

Such an \(r\) splits completely in \(K/\QQ\), and thus
\(r \equiv 1 \pmod{p}\).
Let \(\mathfrak{R}\) be a prime ideal of \(L\) lying above \(r\),
and let \(\mathfrak{r}\) be the prime ideal of \(K\) below it. Since
\(f(\mathfrak{r}/(r)) = 1\),
\[
\Frob{L/K}{\mathfrak{R}}
= \Frob{L/\QQ}{\mathfrak{R}} \ne \id_{L}.
\]
Applying Proposition~\ref{prop:power-residue-criterion} with
\(F = \QQ\), \(\mathfrak{q} = (r)\), and \(\gamma = \gamma_{j}\),
we obtain \(\gamma_{j} \notin \FF_{r}^{\times p}\).
Since \(r \nmid \widetilde{C}_{j}\) and
\(\gamma_{j}/\widetilde{A}_{j} = \widetilde{C}_{j}^{p}\),
we also have \(\widetilde{A}_{j} \notin \FF_{r}^{\times p}\).

Since there are infinitely many such primes for each \((i, j)\), we may choose the \(r_{i, j}\) successively, excluding the finitely many primes already chosen, so that they are mutually distinct and satisfy
\[
\begin{gathered}
r_{i, j} \equiv 1 \pmod{p_{i}}, \qquad
\widetilde{A}_{j} \notin \FF_{r_{i, j}}^{\times p_{i}}, \\
r_{i, j} \nmid 2nc_{1}c_{2}\widetilde{C}_{1}\widetilde{C}_{2}
\qquad (1 \le i \le \omega(n),\ j = 1, 2).
\end{gathered}
\]
In what follows, we fix all these primes.
Here the non-residuosity condition for each \(\widetilde{A}_{j}\) is imposed at a separate prime, so that no linear independence between the classes of \(\widetilde{A}_{1}\) and \(\widetilde{A}_{2}\) is needed.

Next, we construct imaginary quadratic fields by imposing congruence conditions.
Let \(\widetilde{\mathcal{T}}\) be the set of all integers \(t \ge 2B\) satisfying the following congruence conditions:
\[
\begin{aligned}
t &\equiv -B \pmod{\widetilde{M}}, \\
t &\equiv c_{1} \pmod{8}, \\
t &\equiv 0 \pmod{r_{i, j}}
\qquad (1 \le i \le \omega(n),\ j = 1, 2).
\end{aligned}
\]
Since \(\widetilde{M}\) is odd and the moduli are pairwise coprime,
the set \(\widetilde{\mathcal{T}}\) is infinite by the Chinese remainder theorem.
Note that all the auxiliary primes are fixed before \(t\) is chosen.

Let \(t \in \widetilde{\mathcal{T}}\).
Since \(n\) is odd and \(t \equiv -B \pmod{\widetilde{C}_{j}}\), the integer \(\widetilde{C}_{j}\) divides \(t^{n} + B^{n}\). Hence, if we put
\[
\widetilde{k}_{j}(t) = \frac{t^{n} + B^{n}}{\widetilde{C}_{j}},
\]
then \(\widetilde{k}_{j}(t)\) is an integer, and \(\widetilde{k}_{j}(t) > 0\) since \(t \ge 2B > 0\). We also put
\[
\begin{aligned}
\widetilde{\Delta}_{j}(t)
&= -t^{2n} - 2B^{n}t^{n} + c_{j}^{2}
= \widetilde{C}_{j} - (t^{n} + B^{n})^{2}, \\
\widetilde{D}_{j}(t) &= \frac{\widetilde{\Delta}_{j}(t)}{16}, \qquad
\widetilde{F}_{j}(t) = \QQ\left(\sqrt{\widetilde{D}_{j}(t)}\right)
= \QQ\left(\sqrt{\widetilde{\Delta}_{j}(t)}\right).
\end{aligned}
\]
Since \(B\) is even and \(n \ge 3\), we have \(16 \mid 2B^{n}\).
In addition, \(t \equiv c_{1} \pmod{8}\) gives \(t^{2} \equiv c_{1}^{2} \pmod{16}\), and
since \(c_{1}\) is odd and \(n\) is odd, this yields
\[
t^{2n} \equiv c_{1}^{2n} \equiv c_{1}^{2}
\equiv c_{j}^{2} \pmod{16}.
\]
Therefore \(\widetilde{D}_{j}(t) \in \ZZ\).
Since \(t \ge 2B\) and \(B > c_{1}\), we have \(t^{2n} > c_{j}^{2}\), so that
\(\widetilde{\Delta}_{j}(t) < 0\).
By~\eqref{eq:c-square-difference},
\[
\widetilde{D}_{1}(t) = \widetilde{D}_{2}(t) + m < 0.
\]

Furthermore, since
\[
\widetilde{\Delta}_{j}(t)
= \widetilde{C}_{j}\bigl(1 - \widetilde{C}_{j}\widetilde{k}_{j}(t)^{2}\bigr), \qquad
1 - \widetilde{C}_{j}\widetilde{k}_{j}(t)^{2} \equiv 1 \pmod{\rho_{j}},
\]
it follows that
\[
v_{\rho_{j}}\bigl(\widetilde{\Delta}_{j}(t)\bigr)
= v_{\rho_{j}}\bigl(\widetilde{D}_{j}(t)\bigr) = 1.
\]
Hence the odd prime \(\rho_{j} \ge 5\) ramifies in \(\widetilde{F}_{j}(t)/\QQ\).
Since the discriminants of \(\QQ(\sqrt{-1})\) and \(\QQ(\sqrt{-3})\) are \(-4\) and \(-3\), respectively,
the field \(\widetilde{F}_{j}(t)\) is distinct from both exceptional fields.

Next, we define the integers needed to apply Lemma~\ref{lem:imaginary-class-group-criterion}.
For each \(j \in \{1, 2\}\), put
\[
\widetilde{S}_{j}(t)
= \frac{4(c_{j}^{2} - B^{n}t^{n})^{2}}{\widetilde{C}_{j}}.
\]
From \(t^{n} = \widetilde{C}_{j}\widetilde{k}_{j}(t) - B^{n}\), we deduce
\[
c_{j}^{2} - B^{n}t^{n}
= \widetilde{C}_{j}\bigl(1 - B^{n}\widetilde{k}_{j}(t)\bigr).
\]
Furthermore, since \(t \ge 2B\) and \(B > c_{1}\), we see that \(c_{j}^{2} - B^{n}t^{n} < 0\). Consequently,
\[
\widetilde{S}_{j}(t)
= 4\widetilde{C}_{j}\bigl(1 - B^{n}\widetilde{k}_{j}(t)\bigr)^{2},
\]
which is a positive integer.

From \(t \equiv -B \pmod{\widetilde{M}}\) and \(\gcd(B, \widetilde{M}) = 1\), we obtain
\(\gcd(t, \widetilde{C}_{j}c_{j}) = 1\), and
\(t\) is odd since \(t \equiv c_{1} \pmod{8}\).
For any prime \(q\) dividing \(t\), we have \(q \ne 2\) and \(q \nmid c_{j}\), and hence
\[
\widetilde{C}_{j}\widetilde{S}_{j}(t)
= 4(c_{j}^{2} - B^{n}t^{n})^{2}
\equiv 4c_{j}^{4} \not\equiv 0 \pmod{q}.
\]
Thus \(q \nmid \widetilde{S}_{j}(t)\), and
\[
\gcd\bigl(\widetilde{S}_{j}(t), t^{2}\bigr) = 1.
\]
In addition, \(t \equiv 0 \pmod{r_{i, j}}\) and \(r_{i, j} \nmid \widetilde{C}_{j}\) give
\[
\widetilde{S}_{j}(t)
\equiv \frac{4c_{j}^{4}}{\widetilde{C}_{j}}
= \widetilde{A}_{j} \pmod{r_{i, j}}.
\]
By the choice of the auxiliary primes, \(\widetilde{S}_{j}(t)\) is therefore a \(p_{i}\)-th power non-residue modulo \(r_{i, j}\).

A direct computation gives the following relation between the radicands:
\[
(c_{j}^{2} - B^{n}t^{n})^{2} - \widetilde{C}_{j}t^{2n}
= c_{j}^{2}\widetilde{\Delta}_{j}(t).
\]
Hence, if we put
\[
\widetilde{\Delta}_{j}^{\ast}(t) = \widetilde{S}_{j}(t)^{2} - 4\widetilde{S}_{j}(t)t^{2n},
\]
then
\[
\begin{aligned}
\widetilde{\Delta}_{j}^{\ast}(t)
&= \left(\frac{4c_{j}(c_{j}^{2} - B^{n}t^{n})}{\widetilde{C}_{j}}\right)^{2}
\widetilde{\Delta}_{j}(t) \\
&= \bigl(4c_{j}(1 - B^{n}\widetilde{k}_{j}(t))\bigr)^{2}
\widetilde{\Delta}_{j}(t).
\end{aligned}
\]
Here \(1 - B^{n}\widetilde{k}_{j}(t) \ne 0\), and thus
\(\widetilde{\Delta}_{j}^{\ast}(t)\) is a negative integer. Furthermore,
\[
\QQ\left(\sqrt{\widetilde{\Delta}_{j}^{\ast}(t)}\right)
= \widetilde{F}_{j}(t).
\]
We apply Lemma~\ref{lem:imaginary-class-group-criterion} with
\[
(X, Z, D) = \bigl(\widetilde{S}_{j}(t), t^{2}, \widetilde{\Delta}_{j}^{\ast}(t)\bigr),
\qquad l_{i} = r_{i, j}.
\]
The coprimality and non-residuosity established above, together with \(r_{i, j} \equiv 1 \pmod{p_{i}}\) and \(r_{i, j} \mid t^{2}\), show that all the hypotheses of that lemma are satisfied.
Therefore the class group of \(\widetilde{F}_{j}(t)\) contains an element of order \(n\).
Applying this argument to \(j = 1\) and \(j = 2\) proves the claim for both fields.

Finally, we prove that infinitely many distinct pairs arise.
For each \(j\), put
\[
\widetilde{f}_{j}(X) = -X^{2n} - 2B^{n}X^{n} + c_{j}^{2}.
\]
This is a polynomial of degree \(2n \ge 6\), and
\[
\widetilde{f}_{j}'(X) = -2nX^{n - 1}(X^{n} + B^{n}).
\]
Any common root of \(\widetilde{f}_{j}\) and \(\widetilde{f}_{j}'\) must satisfy \(X = 0\) or \(X^{n} = -B^{n}\). At such points, \(\widetilde{f}_{j}\) takes the values \(c_{j}^{2}\) and \(\widetilde{C}_{j}\), respectively, both of which are nonzero. Hence \(\widetilde{f}_{j}\) has no multiple root.
Applying Lemma~\ref{lem:siegel-quadratic-fields} to
\(\widetilde{f}_{2}\) and the infinite set \(\widetilde{\mathcal{T}}\) shows that
\[
\left\{\widetilde{F}_{2}(t) : t \in \widetilde{\mathcal{T}}\right\}
= \left\{\QQ\left(\sqrt{\widetilde{f}_{2}(t)}\right) : t \in \widetilde{\mathcal{T}}\right\}
\]
is infinite. Hence infinitely many distinct pairs
\(\bigl(\widetilde{F}_{2}(t), \widetilde{F}_{1}(t)\bigr)\) are obtained.

Furthermore, \(\rho_{1}\) ramifies in \(\widetilde{F}_{1}(t)\), whereas
\[
\widetilde{\Delta}_{2}(t)
= \widetilde{\Delta}_{1}(t) - 16m
\equiv -16m \not\equiv 0 \pmod{\rho_{1}},
\]
so that \(\rho_{1}\) does not ramify in \(\widetilde{F}_{2}(t)\).
Therefore the two fields in each pair are distinct.
Consequently, if we put \(D = \widetilde{D}_{2}(t)\), then we obtain the desired infinite family of pairs of imaginary quadratic fields.
\end{proof}

\appendix
\section{The case \(l = 2\) of Conjecture~\ref{conj:iizuka}}\label{app:case-l-two}

As the author noted in \cite[\S3]{Iizuka2018},
the case \(l = 2\) of Conjecture~\ref{conj:iizuka} follows from genus theory and the Chinese remainder theorem.
In this appendix, we give a proof that makes explicit the distinction between the class number and the narrow class number for real quadratic fields,
as well as the existence of infinitely many distinct tuples of fields.

\begin{proposition}\label{prop:conjecture-l-two}
Conjecture~\ref{conj:iizuka} holds for \(l = 2\).
Moreover, the quadratic fields within each \((m + 1)\)-tuple can be chosen to be pairwise distinct.
\end{proposition}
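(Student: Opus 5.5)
We generalize the argument of Proposition~\ref{prop:pure-two-power-pairs} from pairs to \((m+1)\)-tuples, treating the real and imaginary cases simultaneously. For the class number to be even we need enough ramified primes. By Lemma~\ref{lem:genus-two-part}~(i), in the imaginary case \(\omega(d_{K}) \ge 2\) suffices. By Lemma~\ref{lem:genus-two-part}~(ii), in the real case \(\omega(d_{K}) \ge 3\) suffices, which handles the gap between \(h^{+}\) and \(h\). So for each shift \(s \in \{0, 1, \dots, m\}\) we plan to force three primes to divide \(D + s\) exactly once.

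Fix an arbitrary finite set \(\mathcal{F}\) of quadratic fields, and let \(\mathcal{S}\) be the finite set of primes ramified in some member of \(\mathcal{F}\). Choose \(3(m+1)\) mutually distinct odd primes \(r_{s,k}\) (\(0 \le s \le m\), \(1 \le k \le 3\)) with \(r_{s,k} > m\) and \(r_{s,k} \notin \mathcal{S}\). By the Chinese remainder theorem, impose
\[
D \equiv r_{s,k} - s \pmod{r_{s,k}^{2}} \qquad (0 \le s \le m,\ 1 \le k \le 3).
\]
This defines a single residue class modulo \(\prod r_{s,k}^{2}\). In the real case we take a positive representative. In the imaginary case we take a representative with \(D + m < 0\). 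Either choice is possible because the class contains integers of arbitrarily large absolute value of both signs.

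Then \(v_{r_{s,k}}(D+s) = 1\), so each \(D+s\) is a non-square and its square-free part is divisible by \(r_{s,1}r_{s,2}r_{s,3}\). Since these primes are odd, \(\omega(d_{\QQ(\sqrt{D+s})}) \ge 3\), and Lemma~\ref{lem:genus-two-part} gives \(2 \mid h(\QQ(\sqrt{D+s}))\) in both cases. For pairwise distinctness, take \(s \ne s'\). Then \(D + s' \equiv s' - s \not\equiv 0 \pmod{r_{s,1}}\), because \(0 < |s'-s| \le m < r_{s,1}\). Hence \(r_{s,1}\) ramifies in \(\QQ(\sqrt{D+s})\) but not in \(\QQ(\sqrt{D+s'})\), so the two fields differ. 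Likewise, \(r_{s,1} \notin \mathcal{S}\) shows that no field of the tuple lies in \(\mathcal{F}\).

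Finally, to get infinitely many distinct tuples, iterate: add all fields of the tuple just produced to \(\mathcal{F}\) and repeat. Each new tuple then contains fields not seen before, so all the tuples are distinct. The only delicate point is the real case, where the factor \(2\) between \(h^{+}\) and \(h\) is the obstacle. It is resolved by requiring three, rather than two, ramified primes, so that Lemma~\ref{lem:genus-two-part}~(ii) applies.
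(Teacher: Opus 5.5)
Your proposal is correct and is essentially the paper's own proof. It uses the same $3(m+1)$ odd primes $r_{s,k} > m$ chosen outside $\mathcal{S}$, the same congruence system $D \equiv r_{s,k} - s \pmod{r_{s,k}^{2}}$, parity of $h$ via Lemma~\ref{lem:genus-two-part}, pairwise distinctness via $r_{s,1}$, and the same iteration that enlarges the finite set $\mathcal{F}$.
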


\begin{proof}
By Lemma~\ref{lem:genus-two-part}, if \(K\) is a real quadratic field with \(\omega(d_{K}) \ge 3\), then \(2 \mid h(K)\); the same sufficient condition also holds when \(K\) is an imaginary quadratic field.

Fix an integer \(m \ge 1\), and let \(\mathcal{F}\) be an arbitrary finite set of quadratic fields.
The set \(\mathcal{F}\) may be empty.
Let \(\mathcal{S}\) be the set of all rational primes that ramify in at least one of the fields in \(\mathcal{F}\).
Since \(\mathcal{S}\) is a finite set,
we can choose \(3(m + 1)\) mutually distinct odd primes
\[
r_{j, k} > m, \qquad r_{j, k} \notin \mathcal{S}
\qquad (0 \le j \le m,\ 1 \le k \le 3).
\]
By the Chinese remainder theorem, the solutions of the system of congruence conditions
\[
D \equiv r_{j, k} - j \pmod{r_{j, k}^{2}}
\qquad (0 \le j \le m,\ 1 \le k \le 3)
\]
form a single residue class modulo \(\prod_{j = 0}^{m}\prod_{k = 1}^{3}r_{j, k}^{2}\).
Thus, in the real quadratic case we can choose a representative with \(D > 0\), whereas in the imaginary quadratic case we can choose one with \(D + m < 0\).

Then, for each \(j\) and \(k\), we have
\[
v_{r_{j, k}}(D + j) = 1.
\]
In particular, \(D + j\) is nonzero and not a square, and thus \(L_{j} = \QQ(\sqrt{D + j})\) is a quadratic field.
Let \(d_{j}\) be the square-free part of \(D + j\).
By the above valuation condition, the primes \(r_{j, 1}, r_{j, 2}, r_{j, 3}\) all divide \(d_{j}\).
Since the discriminant of \(L_{j}\) is \(d_{j}\) or \(4d_{j}\), these three odd primes ramify in \(L_{j}\),
and \(\omega(d_{L_{j}}) \ge 3\). It follows that all the class numbers \(h(L_{j})\) are even.

Furthermore, if \(j \ne j'\), then
\[
D + j' \equiv j' - j \not\equiv 0 \pmod{r_{j, 1}},
\]
where the last incongruence follows from \(0 < |j' - j| \le m < r_{j, 1}\).
Hence \(r_{j, 1}\) ramifies in \(L_{j}\) but not in \(L_{j'}\), and therefore \(L_{j} \ne L_{j'}\).
In addition, since \(r_{j, 1} \notin \mathcal{S}\), the field \(L_{j}\) does not belong to \(\mathcal{F}\).

Thus, given any finite set \(\mathcal{F}\) of quadratic fields, we can construct an \((m + 1)\)-tuple with the required properties, all of whose fields lie outside \(\mathcal{F}\).
Repeating this construction after adding all previously obtained fields to \(\mathcal{F}\) yields infinitely many distinct \((m + 1)\)-tuples.
At each stage, we choose the representative with \(D > 0\) in the real quadratic case and with \(D + m < 0\) in the imaginary quadratic case; this proves the assertion in both cases.
\end{proof}

\section*{Acknowledgements}

I would like to express my gratitude to Professor Shin Nakano and Professor Yutaka Konomi for their helpful comments and suggestions.

\end{document}